\def\1{{\bf 1}}
\begin{document}

\newtheorem{thm}{Theorem}
\newtheorem*{myconjecture}{Conjecture}

\title*{Towards a fractal cohomology: Spectra of Polya--Hilbert operators, regularized determinants and Riemann zeros}
\titlerunning{Title}
\author{Tim Cobler and Michel L. Lapidus \thanks{The work of Michel L. Lapidus was supported by the U.S. National Science Foundation (NSF) under the grant DMS-1107750.}}
\authorrunning{Tim Cobler \and Michel L. Lapidus}
\institute{Tim Cobler \at Department of Mathematics, Fullerton College\\ \email{tcobler@fullcoll.edu} \and Michel L. Lapidus \at Department of Mathematics, University of California, Riverside\\ \email{lapidus@math.ucr.edu} }

\maketitle

\abstract{}
Emil Artin defined a zeta function for algebraic curves over finite fields and made a conjecture about them analogous to the famous Riemann hypothesis. This and other conjectures about these zeta functions would come to be called the Weil conjectures, which were proved by Weil in the case of curves and eventually, by Deligne in the case of varieties over finite fields. Much work was done in the search for a proof of these conjectures, including the development in algebraic geometry of a Weil cohomology theory for these varieties, which uses the Frobenius operator on a finite field. The zeta function is then expressed as a determinant, allowing the properties of the function to relate to the properties of the operator. The search for a suitable cohomology theory and associated operator to prove the Riemann hypothesis has continued to this day. In this paper we study the properties of the derivative operator $D = \frac{d}{dz}$ on a particular family of weighted Bergman spaces of entire functions on $\mathbb{C}$. The operator $D$ can be naturally viewed as the 'infinitesimal shift of the complex plane' since it generates the group of translations of $\mathbb{C}$. Furthermore, this operator is meant to be the replacement for the Frobenius operator in the general case and is used to construct an operator associated to any given meromorphic function. With this construction, we show that for a wide class of meromorphic functions, the function can be recovered by using a regularized determinant involving the operator constructed from the meromorphic function. This is illustrated in some important special cases: rational functions, zeta functions of algebraic curves (or, more generally, varieties) over finite fields, the Riemann zeta function, and culminating in a quantized version of the Hadamard factorization theorem that applies to any entire function of finite order. This shows that all of the information about the given meromorphic function is encoded into the special operator we constructed. Our construction is motivated in part by work of Herichi and the second author on the infinitesimal shift of the real line (instead of the complex plane) and the associated spectral operator, as well as by earlier work and conjectures of Deninger on the role of cohomology in analytic number theory, and a conjectural 'fractal cohomology theory' envisioned in work of the second author and of Lapidus and van Frankenhuijsen on complex fractal dimensions. 
\tableofcontents

\vspace{5mm}

\textbf{Suggested Running Title: Regularized determinants and Riemann zeros}

\section{Introduction}

Riemann's famous paper, \cite{Rie}, opened up the use of complex analysis to study the prime numbers. This approach has yielded many great results in number theory, including, but certainly not limited to, the Prime Number Theorem. Riemann also made his well-known conjecture that stands to this very day. We will refer to this as (RH) in this paper. 

\begin{myconjecture}
\emph{(RH)} The only nontrivial zeros of $\zeta(s)$ occur when $s$ satisfies $\Re(s) = \frac{1}{2}$. 
\end{myconjecture}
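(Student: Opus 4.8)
The statement in question is (RH) itself, so what follows is necessarily a research program rather than a complete argument; I describe the route that the present paper sets up toward it. The plan is to derive (RH) from a Polya--Hilbert type spectral interpretation of the nontrivial zeros of $\zeta$, with the infinitesimal shift $D=\frac{d}{dz}$ playing the role that Frobenius plays in the Weil-conjecture setting. First I would invoke the quantized Hadamard factorization theorem established later in the paper --- applied to the completed zeta function $\xi(s)$, an entire function of order $1$ --- to attach to $\zeta$ a concrete operator $D_\zeta$ on the relevant weighted Bergman space, engineered so that $\zeta(s)$ is recovered, up to elementary $\Gamma$- and pole-factors, as a regularized determinant $\det\bigl(s\cdot\id - D_\zeta\bigr)$. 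The crucial structural feature is that the spectrum of $D_\zeta$ is then exactly the multiset of nontrivial zeros of $\zeta$, so that (RH) becomes the assertion $\sigma(D_\zeta)\subseteq\{s:\Re(s)=\tfrac12\}$.

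Second, I would pass to $H=\frac{1}{i}\bigl(D_\zeta-\tfrac12\,\id\bigr)$, so that (RH) is equivalent to $\sigma(H)\subseteq\mathbb{R}$, i.e.\ to reality of the spectrum of $H$. The heart of the matter is then to produce a Hilbert space structure --- or, more in the spirit of the conjectural ``fractal cohomology'', a positive-definite pairing --- with respect to which $H$ is self-adjoint, or at least symmetric with real spectrum. Here I would try to promote the functional equation $\xi(s)=\xi(1-s)$ to an involution of the underlying space that intertwines $D_\zeta$ with $\id-D_\zeta$, and then look for a polarization of that involution making it unitary and forcing $H=H^{*}$; this is the analytic analogue of how Poincaré duality and positivity (hard Lefschetz) yield the Riemann hypothesis for varieties over finite fields.

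The main obstacle is exactly this last step, and it is not a technicality: constructing the Hilbert space and proving self-adjointness of $H$ is the classical Polya--Hilbert problem, for which no candidate space is currently known to work. The functional equation alone delivers only the symmetry of the zeros about the critical line, not their confinement to it, and the regularized-determinant identity, however clean, does not by itself force the spectrum to be real. What the operator-theoretic framework of this paper does contribute is a specific, explicitly constructed operator $D_\zeta$ with the correct spectrum and the correct determinant formula, thereby reducing (RH) to a positivity/self-adjointness statement about that one operator and pointing to the missing ingredient as a ``fractal cohomological'' pairing in the sense envisioned by Deninger and by Lapidus and van Frankenhuijsen. Because that ingredient is not yet available, (RH) is --- and must here be --- recorded only as a conjecture.
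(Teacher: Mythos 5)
You correctly recognize that (RH) appears in the paper only as a conjecture, so there is no ``paper's proof'' to match; what the paper supplies is exactly the operator-theoretic reformulation you sketch. Your operator $H = \frac{1}{i}\bigl(D_\zeta - \frac{1}{2}\,\id\bigr)$ corresponds to the paper's change of variables $\hat{\xi}(s) = \xi\bigl(\frac{1}{2}+is\bigr)$, and the equivalence between (RH) and reality of the spectrum is precisely Theorem \ref{RHeq}, obtained from Theorem \ref{mainthm}(2) applied to the reciprocals of the zeros of $\hat{\xi}$.

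One point of emphasis, however, differs from the paper and is worth correcting. You locate the gap in the absence of a candidate Hilbert space on which $H$ is self-adjoint, framing this as the unresolved Polya--Hilbert problem. But the paper does exhibit a concrete Hilbert space ($E_Z$, a countable orthogonal sum of copies of the one-dimensional eigenspace of constants) and a concrete bounded operator $D_{\hat{\xi}}$ on it, with an explicit self-adjointness criterion. The actual obstruction, as the paper stresses at the end of Section 5.4 and again in Section 6, is that $D_{\hat{\xi}}$ is assembled directly from the zeros of $\hat{\xi}$, so any argument for its self-adjointness would already presuppose knowing those zeros are real: Theorem \ref{RHeq} is a faithful reformulation of (RH), not a reduction of it. What is genuinely missing is an \emph{independent}, geometric or cohomological derivation of the same operator (a Lefschetz-type fixed-point count for some Frobenius analogue), together with the positivity/duality input you rightly anticipate via the functional-equation involution. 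Your instinct about the missing polarization matches the paper's stated long-term program; the proximate defect is circularity of the construction, not nonexistence of a Hilbert space.
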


For further reading about $\zeta(s)$, see \cite{Tit}, \cite{Pat}, and \cite{Edw}. However, despite (RH) remaining unsolved over 150 years after it was made, there is an analogue for zeta functions of algebraic varieties over finite fields that has been proven. The development of this theory also introduced new techniques to number theory. We will begin with a short history of this result. 

\subsection{The Weil Conjectures}

Using the Euler product representation of the Riemann zeta function in terms of the rational primes, 
\begin{equation}
\zeta(s) = \prod_{p} (1-p^{-s})^{-1}, 
\end{equation}
as a template, it is possible to define the zeta function of an algebraic curve over a finite field as follows.

\begin{definition}
Let $Y$ be a smooth, geometrically connected curve over $\mathbb{F}_q$, the finite field with $q$ elements. Then the zeta function of $Y$ is given by 
\begin{equation}
\zeta_Y(s) = \prod_{y \in |Y|} (1-|k_y|^{-s})^{-1},
\end{equation}
where $|Y|$ is the set of closed points of $Y$ and $|k_y|$ is the size of the residue field of $y$.  
\end{definition}

This formulation of the zeta function of an algebraic curve over a finite field shows the analogy with Riemann's zeta function, but we will prefer the following equivalent expression $\zeta_Y(s) = \text{exp} \left ( \sum_{n=1}^\infty {\frac{Y_n}{n} q^{-ns}} \right )$, where $Y_n$ is the number of points of $Y$ defined over $\mathbb{F}_{q^n}$, the degree $n$ extension of $\mathbb{F}_q$. The study of these zeta functions began in 1924 in Emil Artin's PhD thesis, \cite{Art}. These were further studied by F. K. Schmidt, who proved, in 1931, that $\zeta_Y(s)$ was a rational function of $q^{-s}$ in \cite{Sch}, and H. Hasse, who showed, in 1934, in \cite{Has}, that if $Y$ is an elliptic curve, then the zeros of $\zeta_Y(s)$ satisfy $\Re(s) = \frac{1}{2}$. Thus, the corresponding version of (RH) holds for these zeta functions of elliptic curves over finite fields. Furthering this idea, A. Weil then proved, in 1946--1948, that this same version of (RH) holds for algebraic curves of arbitrary genus and for abelian varieties in \cite{Wei1}. (See also \cite{Wei2}, \cite{Wei3} and \cite{Wei4}.) Below we present a sketch of some of the ideas contained in a modern proof of these results, which are based on Weil's ideas, and will motivate the work contained in this paper. 

First, a sequence of so-called "Weil cohomology" groups for the curve $Y$ are formed, in particular $H^0, H^1, H^2$ are the only nontrivial groups, with $\dim H^0 = \dim H^2 = 1$ and $\dim H^1 = 2g$ where $g$ denotes the genus of $Y$. Then the Frobenius map $F$ which sends $y \to y^q$ acts on the space $\mathbb{F}_{q^n}$ for any $n$ and therefore induces a morphism of the curve $Y$ over $\overline{\mathbb{F}_q}$ (the algebraic closure of $\mathbb{F}_q$) as well as, in fact, also induces a linear map on the cohomology groups $F^*: H^j \to H^j$, for $j \in \{0,1,2\}$. 

Next, consider the Lefschetz fixed point formula from topology.

\begin{thm}\label{lefthm}
\emph{(Lefschetz Fixed Point Formula)} Let $Y$ be a closed smooth manifold and let $f: Y \to Y$ be a smooth map with all fixed points nondegenerate. Then $\sum_{j=0}^\infty (-1)^j \text{Tr}(f^*| H^j)$ is equal to the number of fixed points of $f$. 
\end{thm}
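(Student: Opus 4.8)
The plan is to prove the formula in two essentially independent stages. First, an \emph{algebraic} stage: the alternating sum of traces is recognized as the \emph{Lefschetz number} $L(f)$, which depends only on the homotopy class of $f$ and can be computed from any bounded chain complex of finite-dimensional $\mathbb{Q}$-vector spaces that computes $H^{*}(Y;\mathbb{Q})$. Second, a \emph{geometric} stage: $L(f)$ is identified with a signed count of fixed points. Since $Y$ is a closed smooth manifold, $H^{j}(Y;\mathbb{Q})$ is finite dimensional for all $j$ and vanishes for $j > n := \dim Y$, so $\sum_{j=0}^{\infty}(-1)^{j}\operatorname{Tr}(f^{*}|H^{j})$ is a finite sum and $L(f)$ is well defined (indeed an integer, being the trace of an integer matrix on each free quotient $H^{j}(Y;\mathbb{Z})/\mathrm{tors}$). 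The algebraic input is the Hopf trace formula: for a bounded complex $C_{\bullet}$ of finite-dimensional vector spaces and a chain endomorphism $\phi_{\bullet}$ one has $\sum_{j}(-1)^{j}\operatorname{Tr}(\phi_{j}|C_{j}) = \sum_{j}(-1)^{j}\operatorname{Tr}(H_{j}(\phi)|H_{j}(C_{\bullet}))$; we would prove this by choosing in each degree a splitting $C_{j}\cong B_{j}\oplus\mathcal{H}_{j}\oplus B_{j-1}$ adapted to the inclusions $B_{j}\subset Z_{j}\subset C_{j}$ of boundaries into cycles into chains, computing traces block by block, and noting that the two copies of each $B_{j}$ cancel in the alternating sum.

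For the geometric stage we would pass to the product $Y\times Y$ and use the intersection-theoretic description of $L(f)$. Let $\Delta\subset Y\times Y$ be the diagonal and $\Gamma_{f}=\{(y,f(y)):y\in Y\}$ the graph of $f$, so that $\operatorname{Fix}(f)$ is in natural bijection with $\Gamma_{f}\cap\Delta$. The hypothesis that each fixed point $p$ is nondegenerate --- i.e.\ $\id - df_{p}$ is invertible on $T_{p}Y$ --- is precisely the condition that $\Gamma_{f}$ and $\Delta$ meet transversally at $(p,p)$: indeed $T_{(p,p)}\Gamma_{f}$ is the graph of $df_{p}$ and $T_{(p,p)}\Delta$ is the diagonal of $T_{p}Y\oplus T_{p}Y$, and these two subspaces span iff $\id - df_{p}$ is surjective. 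Hence $\Gamma_{f}\cap\Delta$ is finite, and, after fixing an orientation of $Y$ (which orients $Y\times Y$, $\Delta$ and $\Gamma_{f}$), the oriented intersection number satisfies $[\Gamma_{f}]\cdot[\Delta] = \sum_{p\in\operatorname{Fix}(f)}\varepsilon_{p}$ with $\varepsilon_{p} = \operatorname{sign}\det(\id - df_{p}) = \pm 1$. This signed statement is what we would actually prove; the literal ``number of fixed points'' is the special case in which every $\varepsilon_{p}=+1$, which is what occurs for the Frobenius in the finite-field setting that motivates this paper.

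To conclude it remains to compute $[\Gamma_{f}]\cdot[\Delta]$ cohomologically. Fixing a homogeneous basis $\{e^{(j)}_{i}\}$ of $H^{*}(Y;\mathbb{Q})$ and the Poincar\'e-dual basis $\{\check{e}^{(n-j)}_{i}\}$ of $H^{n-j}(Y;\mathbb{Q})$ with respect to the pairing $(\alpha,\beta)\mapsto\int_{Y}\alpha\cup\beta$, the K\"unneth theorem expresses the class Poincar\'e-dual to $\Delta$ in $Y\times Y$ as $\sum_{j,i} e^{(j)}_{i}\times\check{e}^{(n-j)}_{i}$, up to the Koszul signs dictated by the conventions, and the class dual to $\Gamma_{f}$ as the analogous expression with the matrix of $f^{*}$ inserted; pairing the two classes over $Y\times Y$ and collapsing the resulting double sum via the duality of the chosen bases yields precisely $\sum_{j}(-1)^{j}\operatorname{Tr}(f^{*}|H^{j}(Y))$. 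Combined with the previous paragraph, this gives $L(f)=\sum_{p\in\operatorname{Fix}(f)}\varepsilon_{p}$, as desired.

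The main obstacle is not conceptual but a matter of sign bookkeeping: one must pin down the Koszul signs in the K\"unneth decomposition of the diagonal class, the conventions in Poincar\'e duality and in the geometric intersection pairing, and check that they conspire to produce exactly the alternating factor $(-1)^{j}$ in the trace sum, rather than its absolute value or a degree-shifted variant --- this is the step most fragile under a naive computation, and the one in which the orientability of $Y$ is genuinely used. A subsidiary point is the local index identity $\varepsilon_{p}=\operatorname{sign}\det(\id - df_{p})$, which reduces to writing the change-of-basis matrix from a basis adapted to $T_{(p,p)}\Delta$ and $T_{(p,p)}\Gamma_{f}$ to the standard basis of $T_{p}Y\oplus T_{p}Y$ and taking its determinant. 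If one prefers to bypass Poincar\'e duality, an alternative is the simplicial route: triangulate $Y$, apply the simplicial approximation theorem to replace $f$ by a simplicial self-map $g$ of a sufficiently fine iterated barycentric subdivision that is homotopic to $f$ and moves every simplex off itself except inside small neighborhoods of the (necessarily isolated) fixed points, and then evaluate $\sum_{j}(-1)^{j}\operatorname{Tr}(g_{*}|C_{j})$ simplex by simplex; here the analogous obstacle is to arrange the approximation near each fixed point so that its combinatorial contribution reproduces the local index $\varepsilon_{p}$.
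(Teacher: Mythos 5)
The paper does not prove Theorem~\ref{lefthm}: it is stated as classical background from algebraic topology, used only to motivate the Weil-conjecture discussion via the analogous fixed-point formula for Frobenius in \'etale cohomology. So there is no ``paper's own proof'' to compare against; what you have written is a standalone reconstruction.

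That said, your reconstruction is the standard one (Hopf trace formula reducing from chains to cohomology, then identifying the alternating trace with the intersection number $[\Gamma_f]\cdot[\Delta]$ via the K\"unneth decomposition of the diagonal class), and the outline is correct. Two points are worth flagging. First, you rightly observe that the statement as the paper gives it --- ``equal to the number of fixed points'' --- is really a statement about the \emph{signed} count $\sum_p \operatorname{sign}\det(\id - df_p)$; the unsigned version is simply false for general smooth self-maps, and only holds in favorable situations (such as the Frobenius acting on a variety over a finite field, where the differential is zero and every local index is $+1$). Catching that imprecision is a genuine improvement over the paper's phrasing. Second, your intersection-theoretic route quietly adds the hypothesis that $Y$ is orientable (you fix an orientation to orient $\Delta$, $\Gamma_f$, and the ambient $Y\times Y$), which is not in the statement; the simplicial-approximation route you sketch at the end, or working with the orientation double cover, removes that assumption and is the cleaner way to get the result for an arbitrary closed smooth manifold. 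Neither issue is a gap in your understanding --- you flag both --- but a polished write-up should either add the orientability hypothesis or commit to the simplicial argument.
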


Note that in Theorem \ref{lefthm}, since $Y$ is finite-dimensional, only finitely many of the cohomology spaces $H^j$ are nontrivial.

We apply the topological version of this result to the $n^{th}$ power of the Frobenius map, $F^n$, whose fixed points are exactly the points on the curve $Y$ defined over $\mathbb{F}_{q^n}$. That is, all those points with every coordinate in $\mathbb{F}_{q^n}$. This gives
\begin{equation}
\sum_{j=0}^2 (-1)^j \text{Tr}(F^{*^n}| H^j) = Y_n,
\end{equation}
where $F^{*}|H^j$ (for $j=0,1,2$) denotes the linear operator induced on the cohomology space $H^j$ by the Frobenius morphism $F$. 

To proceed further, we need the next result from linear algebra.

\begin{thm}
If $f$ is an endomorphism of a finite dimensional vector space $V$, then for $|t|$ sufficiently small, 
$\emph{exp} \left (\sum_{n=1}^\infty {\frac{1}{n} t^n \text{Tr}(f^n|V)} \right) = \det(I - f\cdot t|V)^{-1}$.
\end{thm}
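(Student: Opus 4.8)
The plan is to reduce everything to the one-dimensional case by passing to eigenvalues. After choosing a basis, identify $f$ with a $d\times d$ matrix, $d=\dim V$. Since the trace and the determinant of a matrix are unchanged under extension of the ground field, I may first pass to the algebraic closure and put $f$ in upper-triangular form, with diagonal entries $\lambda_1,\dots,\lambda_d$ equal to the eigenvalues of $f$ listed with algebraic multiplicity. Then $f^n$ is upper triangular with diagonal $\lambda_1^n,\dots,\lambda_d^n$, so $\text{Tr}(f^n|V)=\sum_{i=1}^d\lambda_i^n$, while $I-tf$ is upper triangular with diagonal entries $1-t\lambda_i$, so $\det(I-tf|V)=\prod_{i=1}^d(1-t\lambda_i)$.

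Next, set $R=\max_i|\lambda_i|$, with the convention $1/R=\infty$ when $R=0$ (the nilpotent case, where both sides are identically $1$). For $|t|<1/R$ one has $|t\lambda_i|<1$, so $1-t\lambda_i$ lies in the unit disk centered at $1$ and $\sum_{n\ge 1}(t\lambda_i)^n/n=-\log(1-t\lambda_i)$ with the principal branch. Summing the finitely many absolutely convergent series over $i$ and rearranging gives $\sum_{n\ge 1}\frac{t^n}{n}\,\text{Tr}(f^n|V)=\sum_{i=1}^d\sum_{n\ge 1}\frac{(t\lambda_i)^n}{n}=-\sum_{i=1}^d\log(1-t\lambda_i)$. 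Exponentiating turns the sum of logarithms into the product $\prod_{i=1}^d(1-t\lambda_i)^{-1}=\det(I-tf|V)^{-1}$, which is well defined since $\det(I-tf|V)\neq 0$ for $|t|<1/R$. This is the asserted identity.

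An alternative argument, which avoids passing to the algebraic closure, compares logarithmic derivatives: both sides are power series in $t$ equal to $1$ at $t=0$, and for $|t|$ small enough that the Neumann series converges one computes $\frac{d}{dt}\sum_{n\ge 1}\frac{t^n}{n}\text{Tr}(f^n|V)=\sum_{n\ge 1}t^{n-1}\text{Tr}(f^n|V)=\text{Tr}\bigl(f(I-tf)^{-1}|V\bigr)$, while $-\frac{d}{dt}\log\det(I-tf|V)=\text{Tr}\bigl(f(I-tf)^{-1}|V\bigr)$ by Jacobi's formula $\frac{d}{dt}\det A(t)=\det A(t)\,\text{Tr}\bigl(A(t)^{-1}A'(t)\bigr)$. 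Matching the value at $t=0$ then yields the result. The computation is essentially routine; the only points that genuinely require care — and that dictate the hypothesis "$|t|$ sufficiently small", i.e. $|t|<1/R$ — are the radius of convergence of the two series, the choice of branch of the logarithm, and (in the first approach) the remark that triangularization is legitimate because trace and determinant are stable under field extension.
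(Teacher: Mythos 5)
Your proof is correct. Note that the paper itself states this result without proof, treating it as a standard linear algebra fact used in the Weil-conjecture computation, so there is no paper proof to compare against. Your primary argument (triangularize over the algebraic closure, identify $\text{Tr}(f^n)$ with $\sum_i \lambda_i^n$ and $\det(I-tf)$ with $\prod_i(1-t\lambda_i)$, then use $-\log(1-x)=\sum_{n\ge1}x^n/n$ on each factor) is the standard one and is complete; you correctly flag the three points that actually need care — stability of trace and determinant under field extension, the radius $|t|<1/\max_i|\lambda_i|$, and the branch of $\log$. The alternative via Jacobi's formula is also sound and has the minor advantage of staying over the original ground field, at the cost of invoking the Neumann series for $(I-tf)^{-1}$; either version would serve.
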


Applying this result to the Frobenius operator $F$, we can proceed with the following calculation:
\begin{align}
\zeta_Y( s) & = \text{exp} \left (\sum_{n=1}^\infty {\frac{Y_n}{n} q^{-ns}} \right ) \nonumber \\
& = \text{exp} \left (\sum_{n=1}^\infty {\frac{1}{n} \sum_{j=0}^2 (-1)^{j} Tr(F^{*^n}| H^j) q^{-ns}} \right ) \nonumber \\
& = \prod_{j=0}^2 \left (\text{exp} \left (\sum_{n=1}^\infty {\frac{1}{n}  Tr(F^{*^n}| H^j) q^{-ns}} \right ) \right )^{(-1)^{j}} \nonumber \\
& = \prod_{j=0}^2 \left ( \det(I - F^* q^{-s} | H^j) \right )^{(-1)^{j+1}} \nonumber \\
& = \frac{\det(I - F^* q^{-s} | H^1)}{\det(I - F^* q^{-s} | H^0)\det(I - F^* q^{-s} | H^2)}.
\end{align}

This enables us to express the zeta function of a curve $Y$ as an alternating product of characteristic polynomials of the Frobenius operators, or more precisely, of determinants of $I - q^{-s} F^*$ over the cohomology spaces. Since these spaces are finite-dimensional, this equation further shows that $\zeta_Y(s)$ is a rational function of $q^{-s}$, which yields Schmidt's result. We also see that the zeros of $\zeta_Y(s)$ are given from the eigenvalues of the operator $F^*$ on $H^1$, while the poles are given from the eigenvalues on $H^0$ and $H^2$. To complete the proof, it was shown by Weil using the intersection theory of divisors to show that the intersection is positive definite that the eigenvalues of $F^*$ on $H^j$ have absolute value $q^{\frac{j}{2}}$ and thus the zeros of $\zeta(Y, s)$ satisfy $\Re(s) = \frac{1}{2}$. 

Weil then conjectured that all of the above and more could be generalized to any non-singular, projective variety of dimension $d$, defined over $\mathbb{F}_q$. About a decade later, Alexander Grothendieck announced he would be revamping algebraic geometry with the goal of proving these Weil conjectures. Several attempts to construct a proper "Weil cohomology" were incomplete, but eventually these provided the key idea to the proof of the Weil conjectures. Grothendieck even came up with more general conjectures based on this study of what properties a "Weil cohomology" must possess. The version of (RH) sought after would then follow from these. Some of his work outlining these ideas are \cite{Gro1}, \cite{Gro2}, and \cite{Gro3}. However, Pierre Deligne, a student of Grothendieck, would go on to prove, in 1973, this version of (RH) without proving Grothendieck's 'standard conjectures', which are still unproven today. See \cite{Del1} and \cite{Del2} for Deligne's work. Thus, Weil's conjectures were completed as a result of the introduction of, or at the very least, expansion of, the use of topology and cohomology in number theory. For a more complete history of the Weil conjectures, see \cite{Die}, \cite{Kat} and \cite{Oort}.

\subsection{Polya--Hilbert Operators and a Cohomology Theory in Characteristic Zero}

As seen in the previous section, the Frobenius operator became fundamental to the resolution of the version of (RH) dealing with algebraic varieties (or even with curves) over finite fields. The eigenvalues of this operator on different cohomology groups gave us the zeros and poles of the zeta function of the variety. If such an operator could be found for the Riemann zeta function, then perhaps this work would extend and help one to prove (RH). However, if you instead consider the function $\zeta(\frac{1}{2}+it)$ as a function of $t$, then (RH) is equivalent to all the nontrivial (or critical) zeros of this function being real. This then leads into what is known as the Polya--Hilbert conjecture.

\begin{myconjecture}
\emph{(Polya--Hilbert Conjecture)} The critical zeros of $\zeta(\frac{1}{2} +it)$ correspond to the eigenvalues of an unbounded self-adjoint operator on a suitable Hilbert space.
\end{myconjecture}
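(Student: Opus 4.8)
The plan is to realize the conjectural Polya--Hilbert operator explicitly inside the operator-theoretic framework sketched above, taking the infinitesimal shift $D = \frac{d}{dz}$ on a suitable weighted Bergman space of entire functions as the characteristic-zero analogue of the Frobenius operator, and then transporting the spectral data of $\zeta$ onto a self-adjoint operator by functional calculus. First I would fix the weight so that $D$ is a densely defined closed operator on a Hilbert space $H$ of entire functions and the translation group $e^{wD}f(z) = f(z+w)$ acts (strongly continuously, at least on an appropriate domain) by composition; this group is the genuine analogue of the powers of Frobenius, so it is the right object to build from. I would then define the \emph{spectral operator} $\mathfrak{z} = \zeta(D)$ via the Dirichlet series $\zeta(D) = \sum_{n=1}^{\infty} n^{-D}$, where $n^{-D} = e^{-(\log n)D}$ is the backward shift by $\log n$, making sense of it through a regularized (Hadamard-type) determinant so that $\det(D - (\tfrac12 + it)I)$, suitably interpreted, reproduces the completed zeta function $\xi$.

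Next, RH in the form ``all critical zeros of $\zeta(\tfrac12 + it)$ are real'' would be recast as a statement about the operator $T = \tfrac{1}{i}(D - \tfrac12 I)$ (equivalently, the imaginary part of $D$ in the sense of the Cartesian decomposition of a non-self-adjoint operator): namely, that $\mathrm{spec}(T)$ is purely real and consists precisely of the imaginary parts of the nontrivial zeros of $\zeta$. The key steps are then: (i) show that the nontrivial zeros of $\zeta$ are in bijection with a discrete subset of $\mathrm{spec}(T)$ by matching the regularized determinant attached to $T$ with $\xi$; (ii) choose the inner product on $H$ so that $T$ is symmetric; and (iii) prove that $T$ is essentially self-adjoint, equivalently that its deficiency indices agree, equivalently that it has no spectrum off $\mathbb{R}$. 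One may also phrase (iii) dually, in the spirit of the Herichi--Lapidus work on the infinitesimal shift of the real line, as the quasi-invertibility of the spectral operator $\mathfrak{z} = \zeta(D)$.

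The main obstacle is step (iii). Proving self-adjointness (equivalently, reality of the spectrum) is exactly as hard as RH itself, because on the function-space side it amounts to a positivity estimate for the quadratic form attached to $T$ on $H$ --- the characteristic-zero counterpart of Weil's positivity of the intersection pairing on $H^1$, and precisely the ingredient a putative ``fractal cohomology'' would have to supply. Absent such a positivity, what one can actually carry out is the construction of $D$, the operator $\mathfrak{z}$, and the regularized-determinant formula recovering $\xi$, together with the structural reduction of RH to the self-adjointness of $T$; the genuine analytic difficulty is then concentrated entirely in the missing positivity/cohomology input, which is why this remains a conjecture rather than a theorem.
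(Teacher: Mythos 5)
The statement you are addressing is labeled \emph{Conjecture} in the paper, and the paper neither proves it nor claims to; there is no ``paper proof'' to match against. Your proposal is not a proof either, and you say so yourself: your step (iii), the self-adjointness/positivity of $T$, is exactly where you concede the argument is ``as hard as RH itself.'' What the paper actually does, in lieu of a proof, is the conditional reformulation of Theorem \ref{RHeq}: it takes the multiset $Z$ of zeros of $\hat{\xi}(s) = \xi(\tfrac12 + is)$ as \emph{given data}, builds the diagonal operator $D_{\hat{\xi}} = D_Z = \bigoplus_n (D + z_n I)\big|_E$ on $E_Z = \bigoplus_n E$ (where $E$ is the one-dimensional eigenspace of constants), and then applies Theorem \ref{mainthm}(2) to conclude that $D_{\hat{\xi}}$ is self-adjoint if and only if every $z_n$ is real, i.e.\ iff RH. The zeros are an input to that construction, which is precisely why, as the Conclusion says explicitly, ``one could never hope to prove (RH) directly with this method.'' Your own positivity obstruction is the same hole, so the two stances are consistent on that point.

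There is, however, a concrete technical gap in the route you sketch on top of that honest concession. You place $D = \tfrac{d}{dz}$ on the weighted Bergman space $H_\alpha$ of this paper and then propose $T = \tfrac{1}{i}(D - \tfrac12 I)$ and $\mathfrak{z} = \zeta(D) = \sum_n n^{-D}$, expecting the spectrum of $T$ to carry the imaginary parts of the critical zeros. But on $H_\alpha$ with $0 < \alpha < 1$ the paper shows (Theorems \ref{theorem2} and \ref{Dtheorem}) that $\sigma(D) = \{0\}$, so $D$ is quasinilpotent; by the spectral mapping theorem $\sigma(T) = \{\,\tfrac{i}{2}\,\}$ is a single point, and your step (i), matching the nontrivial zeros with a discrete subset of $\sigma(T)$, fails at once. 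Likewise the Dirichlet series $\sum_n n^{-D} = \sum_n e^{-(\log n)D}$ cannot converge in operator norm here: $e^{-(\log n)D}$ is the translation by $\log n$ and fixes the constant function $1 \in H_\alpha$, so $\lVert n^{-D} \rVert \geq 1$ for every $n$. The operator $\zeta(\partial_c)$ you have in mind lives in the Herichi--Lapidus setting of $\partial_c$ on $L^2(\mathbb{R}, e^{-2ct}\,dt)$, an \emph{unbounded} normal operator with $\sigma(\partial_c)$ equal to a vertical line; that is a different Hilbert space and a different operator from the bounded quasinilpotent $D$ of the present paper, and the two cannot be interchanged. In this paper the Riemann zeros enter only through the direct-sum operator $D_Z$ built with those zeros supplied by hand, and the self-adjointness question for $D_{\hat{\xi}}$ (Theorem \ref{RHeq}) is a reformulation of RH, not a route to it.
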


Since then, motivated in part by the above reformulation, many physicists, mathematicians and mathematical physicists have been looking for a convincing physical reason why (RH) should be true. In particular, it has been conjectured by Michael Berry in \cite{Ber} (and several other papers) that a trace formula for a suitable (classically chaotic) quantum-mechanical Hamiltonian could formalize this connection between the spectrum of an operator and the Riemann zeros. See also \cite{BerKea} for a discussion of these ideas. In fact, Alain Connes, in \cite{Conn}, conjectured the existence of a suitable noncommutative version of such a trace formula. However, as in every other approach to proving (RH), the search for the correct way to make this potential approach work continues to this day. 

Building on Alexander Grothendieck's ideas, Christopher Deninger has postulated in \cite{Den1}, \cite{Den2} (and other papers) that the cohomology theory used to prove the Weil conjectures could be extended to eventually prove the Riemann hypothesis. In particular, he envisions a cohomology theory of algebraic schemes over $Spec(\mathbb{Z})$ that would conjecturally help prove the Riemann hypothesis and solve other important problems in analytic number theory. In his papers, he lays out some of the difficulties in doing so as well as some of the properties that such a theory would need to satisfy. 

We also mention that Shai Haran \cite{Har2} has obtained interesting trace formulas yielding Weil's explicit formula: that is, of Weil's interpretation in \cite{Wei8} of Riemann's explicit formula (\cite{Rie}, \cite{Edw}, \cite{Tit}, \cite{Pat}, \cite{ISRZ}, \cite{LapvFr}).

\subsection{Fractal Cohomology}

All of the previous ideas as well as separate connections between fractals and the Riemann zeta function $\zeta(s)$ discussed in \cite{LapMai} and \cite{LapvFr}, motivated the second author to pursue a fractal cohomology to try to tie together all of the ideas presented so far. The text, \cite{ISRZ}, outlines his ideas for how the theory of fractals might give information about the Riemann zeta function. See also Section 12.4 of \cite{LapvFr} for a discussion of the main properties that such a fractal cohomology theory should satisfy, by analogy with the case of varieties over finite fields and self-similar strings. 

In search of the elusive 'Frobenius operator in characteristic 0', the second author worked with H. Herichi to develop a 'Quantized Number Theory' in \cite{HerLap1}, \cite{HerLap2}, \cite{HerLap3}, \cite{HerLap4}, \cite{Lap}. Here, they used an operator they denoted $\partial$, which was the derivative operator on a suitable family of Hilbert spaces. This operator had many nice properties, including being a generator for the infinitesimal shift group on these spaces as well as having a spectrum consisting of a single vertical line in the complex plane. This allowed them to focus on the values of $\zeta(s)$ on $\Re(s) = c$ for $c \in (0, \frac{1}{2})$ or for $c \in (\frac{1}{2}, 1)$ and obtain a reformulation of (RH) within this theory. This involved studying an operator-valued version of $\zeta(s)$, which they called a quantized zeta function. An overview of these ideas and results can be found in \cite{Lap}, while a detailed exposition of the theory is provided in \cite{HerLap1}. 

This paper then continues this search of an appropriate substitute for Frobenius in characteristic zero. In an attempt to further localize the spectrum of the derivative operator, we turn to a family of weighted Bergman spaces, which provide the basis for our construction. We will begin by recalling some needed functional analysis building up to the regularized determinants that we will need. Then we discuss the family of Bergman spaces and the needed properties of the derivative operator on them, which allows our construction to work. At this point, we will detail our construction to create a Frobenius replacement. This provides a general framework to find a substitute for the Frobenius operator which will be shown to apply to any entire function of finite order as well as certain meromorphic functions of interest such as $\zeta(s)$. However, there is still much to be done. We do not have a true cohomology theory as we do not have a suitable notion for how to define the geometry in our context. We will finish with a discussion of what is lacking from this theory and where to go from here.



\section{Background}

This section loosely follows \cite{Sim} in developing the necessary theory for trace ideals and regularized determinants to be used in this paper. See \cite{Sim2, Sim} for detailed historical notes and references, along with a discussion of the many contributions to this subject.

To describe what trace ideals are, we recall some standard facts about compact operators on a separable Hilbert space $H$. 

\begin{thm}
Let $A$ be a compact operator on $H$. Then there are orthonormal sets $\{\psi_n\}$ and $\{\phi_n\}$ and positive real numbers $\mu_n(A)$, with $\mu_1(A) \geq \mu_2(A) \geq \cdots$, such that $ A = \sum_n \mu_n(A) (\psi_n, \cdot) \phi_n$. Moreover, the numbers $\mu_n(A)$ are uniquely determined.
\end{thm}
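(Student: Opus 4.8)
The plan is to reduce everything to the spectral theorem for compact, self-adjoint, non-negative operators, applied to $A^*A$. First I would note that $A^*A$ is compact (the compact operators form a two-sided $*$-ideal, so $A$ compact forces $A^*A$ compact), self-adjoint, and non-negative, since $(A^*Ax,x) = \|Ax\|^2 \geq 0$. By the spectral theorem for such operators it therefore possesses an orthonormal system $\{\psi_n\}$ of eigenvectors whose eigenvalues $\lambda_n$ are strictly positive; arranging these in non-increasing order $\lambda_1 \geq \lambda_2 \geq \cdots$ (with $\lambda_n \to 0$ in the infinite-rank case) and setting $\mu_n(A) := \sqrt{\lambda_n}$ produces the numbers in the statement.

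Next I would define $\phi_n := \mu_n(A)^{-1} A\psi_n$ and check orthonormality of $\{\phi_n\}$ directly: $(\phi_n,\phi_m) = (\mu_n\mu_m)^{-1}(A\psi_n, A\psi_m) = (\mu_n\mu_m)^{-1}(A^*A\psi_n,\psi_m) = \lambda_n(\mu_n\mu_m)^{-1}\delta_{nm} = \delta_{nm}$. To establish the claimed representation I would first record that $\ker A = \ker(A^*A)$: if $A^*Ax = 0$ then $\|Ax\|^2 = (A^*Ax,x) = 0$, and the reverse inclusion is trivial. Now decompose an arbitrary $x \in H$ as $x = \sum_n (\psi_n, x)\psi_n + x_0$, where $x_0$ is orthogonal to every $\psi_n$; since the $\psi_n$ span $\overline{\operatorname{ran}(A^*A)}$, we have $x_0 \in \overline{\operatorname{ran}(A^*A)}^{\perp} = \ker(A^*A) = \ker A$. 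Because $A$ is bounded, applying it termwise gives $Ax = \sum_n (\psi_n,x)A\psi_n = \sum_n \mu_n(A)(\psi_n,x)\phi_n$, with convergence in $H$ inherited from the convergence of $\sum_n (\psi_n,x)\psi_n$ together with continuity of $A$. This is exactly $A = \sum_n \mu_n(A)(\psi_n,\cdot)\phi_n$.

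Finally, for uniqueness I would observe that any representation $A = \sum_n \mu_n(\psi_n,\cdot)\phi_n$ with $\{\psi_n\}$, $\{\phi_n\}$ orthonormal and $\mu_n > 0$ yields $A^* = \sum_n \mu_n(\phi_n,\cdot)\psi_n$ and hence $A^*A = \sum_n \mu_n^2(\psi_n,\cdot)\psi_n$; thus the numbers $\mu_n^2$ are precisely the nonzero eigenvalues of $A^*A$, counted with multiplicity, which are intrinsic to $A$. Requiring them in non-increasing order then pins down the sequence $\mu_n(A)$ uniquely. The only substantive input is the spectral theorem for compact self-adjoint operators, which I would invoke as standard (cf. \cite{Sim}); the one point requiring mild care is to handle the finite-rank and infinite-rank cases uniformly, and in particular to make sure the list $\{\psi_n\}$ exhausts all eigenvectors with nonzero eigenvalue, so that the remainder $x_0$ genuinely lies in $\ker A$.
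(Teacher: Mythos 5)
The paper states this theorem without proof, simply recalling it as a standard fact about compact operators (attributed to the background developed in Simon's book). Your proposal is the standard singular value decomposition argument — diagonalize $A^*A$ by the spectral theorem for compact self-adjoint operators, set $\phi_n := \mu_n^{-1}A\psi_n$, check orthonormality, use $\ker A = \ker(A^*A) = \overline{\operatorname{span}\{\psi_n\}}^{\perp}$ to kill the residual piece, and read off uniqueness of the $\mu_n$ as the square roots of the eigenvalues of $A^*A$ — and it is correct and complete. Since the paper offers nothing to compare against, there is no question of a differing route; your write-up supplies the proof the paper deliberately omits.
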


The positive real numbers $\mu_n(A)$ from the previous theorem are called the \emph{singular values} of $A$. We can actually describe $\{\mu_n(A)\}$ in another way. Given an operator $A$, the operator $A^* A$ is a nonnegative operator, so that $|A| := \sqrt{A^* A}$ makes sense. The $\mu_n(A)'s$ are exactly the (nonzero) eigenvalues of $|A|$. We can now turn to Calkin's theory of operator ideals. We begin by setting up a relationship between ideals in $B(H)$ and certain sequence spaces. 

\begin{definition}
Fix an orthonormal set $\{\phi_n\}$ in $H$. Given an ideal $J \neq B(H)$; we define the \emph{sequence space associated to} $J$ by 
\begin{equation}
\displaystyle S(J) = \{ a = (a_1, a_2, ...) | \sum_n a_n (\phi_n, \cdot) \phi_n \in J\}. 
\end{equation}
On the other hand, given a sequence space $\textbf{s}$, let $I(\textbf{s})$ be the family of compact operators $A$ with $(\mu_1(A), \mu_2(A), ...) \in \textbf{s}$.
\end{definition}

In order for this correspondence between sequence spaces and ideals to be one-to-one, we need to restrict our sequence spaces to Calkin spaces. We then need the following operator on sequences.

\begin{definition}
Given an infinite sequence, $(a_n)$, of numbers with $a_n \to 0$ as $n \to \infty$, $a_n^*$ is the sequence defined by $a_1^* = \max_i |a_i|$, $a_1^* + a_2^* = \max_{i \neq j} (|a_i| + |a_j|)$, etc. Thus $a_1^* \geq a_2^* \geq \cdots$, and the sets of $a_i^*$ and $|a_i|$ are identical, counting multiplicities. 
\end{definition}

\noindent This operator allows us to make the following definition.

\begin{definition}
A \emph{Calkin space} is a vector space, $\textbf{s}$, of sequences $(a_n)$ with $\displaystyle \lim_{n \to \infty} a_n = 0$, and the so-called Calkin property: $a \in \textbf{s}$ and $b_n^* \leq a_n^*$ implies $b \in \textbf{s}$.
\end{definition}

With these definitions in mind, we can use the following theorem to see a relation between two-sided ideals and Calkin spaces. 

\begin{thm}
\emph{\cite{Sim}} If $\textbf{s}$ is a Calkin space, then $I(\textbf{s})$ is a two-sided ideal of operators and $S(I(\textbf{s})) = \textbf{s}$. Furthermore, if $J$ is a two-sided ideal, then $S(J)$ is a Calkin space and $I(S(J)) = J$.
\end{thm}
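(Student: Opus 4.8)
The plan is to reduce all four assertions — that $I(\mathbf{s})$ is a two-sided ideal, that $S(I(\mathbf{s})) = \mathbf{s}$, that $S(J)$ is a Calkin space, and that $I(S(J)) = J$ — to two elementary facts about singular values, both immediate from the canonical form theorem quoted above. The first is the approximation-number formula $\mu_n(A) = \inf\{\|A - F\| : \operatorname{rank}(F) < n\}$, from which one reads off the quasi-triangle inequality $\mu_{m+n-1}(A+B) \le \mu_m(A) + \mu_n(B)$ and the two-sided bound $\mu_n(XAY) \le \|X\|\,\mu_n(A)\,\|Y\|$ for $X,Y \in B(H)$. The second is a rigidity statement: if $A,B$ are compact with $\mu_n(B) \le \mu_n(A)$ for all $n$, then $B = XAY$ with $\|X\|,\|Y\| \le 1$; one simply writes down the two canonical decompositions and takes $X, Y$ to be the obvious partial isometries, absorbing the diagonal contraction $(\mu_n(B)/\mu_n(A))_n$ into $X$. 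I would either cite \cite{Sim} for these or insert the short derivations.

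\textbf{Step 1: $I(\mathbf{s})$ is a two-sided ideal, and $S(I(\mathbf{s})) = \mathbf{s}$.} Closure under scalar multiples is trivial since $\mu_n(\lambda A) = |\lambda|\mu_n(A)$ and $\mathbf{s}$ is a vector space. For additivity, the quasi-triangle inequality gives $\mu_{2n-1}(A+B),\,\mu_{2n}(A+B) \le c_n := \mu_n(A)+\mu_n(B)$, so the (already nonincreasing) singular-value sequence of $A+B$ is dominated termwise by the "doubled" sequence $(c_1,c_1,c_2,c_2,\dots)$; this doubled sequence lies in $\mathbf{s}$ because $c \in \mathbf{s}$ and doubling splits as $(c_1,0,c_2,0,\dots) + (0,c_1,0,c_2,\dots)$, each summand having the same $*$-rearrangement as $c$, hence lying in $\mathbf{s}$ by the Calkin property — and the Calkin property then puts $(\mu_n(A+B))$ in $\mathbf{s}$. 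The ideal property follows the same way from $\mu_n(XA),\mu_n(AX) \le \|X\|\mu_n(A)$, compactness of the products being automatic; no topological closedness is needed since "two-sided ideal" here means algebraic ideal. For $S(I(\mathbf{s})) = \mathbf{s}$: for a diagonal operator $D_a = \sum_n a_n(\phi_n,\cdot)\phi_n$ in the fixed orthonormal set the canonical form shows $D_a$ is compact iff $a \in c_0$, with $\mu_n(D_a) = a_n^*$; since $\mathbf{s} \subseteq c_0$ and $(a^*)^*_n = a^*_n$, the Calkin property gives $a \in \mathbf{s} \iff a^* \in \mathbf{s}$, whence $a \in S(I(\mathbf{s})) \iff (a_n^*) \in \mathbf{s} \iff a \in \mathbf{s}$.

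\textbf{Step 2: $S(J)$ is a Calkin space, and $I(S(J)) = J$.} The one non-formal input is that a two-sided ideal $J \neq B(H)$ consists of compact operators: if $T \in J$ were non-compact, then via the polar decomposition $|T| \in J$ is a non-compact positive operator, so $E = \mathbf{1}_{[\varepsilon,\infty)}(|T|)$ has infinite rank; inverting $E|T|E$ on $\operatorname{ran} E$ yields $E \in J$, and conjugating $E$ by a partial isometry from $H$ onto the infinite-dimensional subspace $\operatorname{ran} E$ forces $I \in J$, a contradiction. Hence $S(J) \subseteq c_0$. That $S(J)$ is a vector space is immediate as $J$ is a linear subspace and $D_{a+b} = D_a + D_b$. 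For the Calkin property, if $a \in S(J)$ and $b_n^* \le a_n^*$, then $\mu_n(D_b) = b_n^* \le a_n^* = \mu_n(D_a)$, so by the rigidity fact $D_b = X D_a Y$ with $X,Y$ bounded, giving $D_b \in J$ and $b \in S(J)$. Finally, for $I(S(J)) = J$: if $A \in J$, its canonical form $A = \sum \mu_n(A)(\psi_n,\cdot)\phi_n$ can be conjugated by the partial isometries sending $\psi_n, \phi_n$ to the fixed orthonormal vectors $\phi_n$, turning $A$ into $D_{(\mu_n(A))}$; hence $D_{(\mu_n(A))} \in J$, i.e. $(\mu_n(A)) \in S(J)$ and $A \in I(S(J))$. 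Conversely, if $A \in I(S(J))$ then $D_{(\mu_n(A))} \in J$, and conjugating it by the inverse partial isometries recovers $A$, so $A \in J$.

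\textbf{Main obstacle.} Everything except the compactness lemma in Step 2 is bookkeeping with the two singular-value facts; the point needing genuine care is that lemma — specifically, manufacturing $I \in J$ from a single non-compact element, which is where the structure of $B(H)$ (separability, availability of the right partial isometries) actually enters. The second place to be careful is the rigidity statement $\mu_n(B) \le \mu_n(A) \Rightarrow B = XAY$: it is precisely what makes the Calkin property of a Calkin space and the ideal property of an ideal the same statement transported across the correspondence $a \leftrightarrow D_a$, and getting the partial isometries and domains exactly right (including the indices where $\mu_n(A) = 0$) is the fiddly part.
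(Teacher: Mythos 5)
The paper states this theorem only as a citation to Simon's book \cite{Sim} and does not reproduce a proof, so there is no in-paper argument to compare against; your proof is correct and follows the standard Calkin--Simon route. The two inputs you isolate (Ky Fan's inequalities $\mu_{m+n-1}(A+B) \le \mu_m(A)+\mu_n(B)$, $\mu_n(XAY) \le \|X\|\mu_n(A)\|Y\|$, and the rigidity lemma producing $B = XAY$ from termwise singular-value domination), together with the compactness lemma for proper two-sided ideals of $B(H)$, are exactly the ingredients of the cited proof, and your handling of the doubling trick for additivity and of the degenerate indices where $\mu_n(A)=0$ in the rigidity step is sound.
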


We will now use this relation to define the ideals in the space of compact operators that we will be working with. 

\begin{definition}\label{Jpdef}
A compact operator $A$ is said to be in the \emph{trace ideal} $J_p$, for some $p \geq 1$, if $ \sum_n \mu_n(A)^p < \infty$. That is, $J_p$ is the ideal which is associated to the Calkin space $l^p$. An element $A$ of $J_1$ is called a \emph{trace class operator}. For $A \in J_1$, we define $ \text{Tr}(A) = \sum_n (\phi_n, A \phi_n)$ for any choice of orthonormal basis $\{\phi_n\}$. If $A \in J_2$, then we say that $A$ is \emph{Hilbert--Schmidt}.
\end{definition}

Trace class operators, $A$, are precisely those operators for which the expression $ \text{Tr}(A) = \sum_n (\phi_n, A \phi_n)$ is absolutely convergent and independent of the choice of orthonormal basis. Similarly, Hilbert--Schmidt operators are those for which $ \sum_n (A \phi_n, A \phi_n) = \lVert A \phi_n \rVert^2$ is convergent and independent of the choice of orthonormal basis. If $A$ is a trace class operator, then there is a method to define a so-called \emph{Fredholm determinant}, $\det(I+zA)$, which defines an entire function on $\mathbb{C}$. Operators of the form $I + zA$ for a trace class operator $A$ are called \emph{Fredholm}. This determinant can be defined in several equivalent ways. We list them here for trace class $A$ and $z \in \mathbb{C}$: \\
\begin{equation}
\det(I+zA) := e^{\text{Tr}(\log(I+zA))}
\end{equation}
for small $|z|$ and then analytically continued to the whole complex plane, 
\begin{equation}
\det(I + zA) = \displaystyle\sum_{k=0}^\infty z^k\text{Tr}(\wedge^k(A))
\end{equation}
with $\wedge^k(A)$ defined in terms of alternating algebras, and
\begin{equation}\label{deteqn}
\det(I + zA) = \displaystyle\prod_{k=1}^{N(A)} (1+z \lambda_k(A)),
\end{equation}
where the complex numbers $\lambda_k(A)$ are the nonzero eigenvalues of $A$ and $N(A)$ is the number of such eigenvalues, which can be infinite. In the latter case, the corresponding infinite product is convergent. 

A discussion concerning which of the above equations should be taken as a definition and which are to be proven appears briefly in Chapter 3 of \cite{Sim} and in more detail in \cite{Sim2}. For the work here, (\ref{deteqn}) will be the most convenient choice. One thing to note at this time though is that $\det(I+zA)$ does define an entire function by any of the above definitions, when $A$ is trace class. This then shows why one cannot hope to recover a meromorphic function by simply taking the determinant of a suitable operator without taking the quotient of such determinants as was seen in the discussion of the Weil conjectures.

Although some of the operators we will consider will not be trace class, they will at least be in one of the other trace ideals $J_n$, for some $n \in \mathbb{N}$. In this case, we can define a regularized determinant that will allow us to get a determinant formula for the operator. We start by considering an expression of the form $\det(I+zA) e^{-zTr(A)}$. For trace class operators $A$, both $\det(I+zA)$ and $e^{-zTr(A)}$ are convergent, but for Hilbert--Schmidt operators neither is necessarily well defined. And yet, when you consider the two factors together as a possibly infinite product over the eigenvalues of $A$, 
\begin{equation*}
\displaystyle\prod_{k=1}^{N(A)} \left ( (1+z\lambda_k(A))\exp ( -\lambda_k(A) z  \right ),
\end{equation*}
the combined term does converge for Hilbert--Schmidt operators. This idea can in fact be extended to get a convergent infinite product expression for operators in any $J_n$, which will be called the \emph{regularized determinant} of $A$. First we need a lemma.

\begin{lemma}
\emph{\cite{Sim}} For $A \in B(H)$, let 
\begin{equation}
R_n(A) = \left [ (I+A) \exp \left ( \sum_{j=1}^{n-1} {(-1)^j j^{-1}A^j} \right ) \right ] - I. 
\end{equation}
Then if $A \in J_n$, we have $R_n (A) \in J_1$.  
\end{lemma}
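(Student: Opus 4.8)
The plan is to reduce the statement to a single scalar identity: the regularizing factor $\exp\bigl(\sum_{j=1}^{n-1}(-1)^j j^{-1}z^j\bigr)$ is exactly the exponential of the degree-$(n-1)$ truncation of the power series of $-\log(1+z)$, so that multiplying it by $1+z$ annihilates the first $n-1$ Taylor coefficients. Concretely, set
\[
f(z) = (1+z)\exp\Bigl(\sum_{j=1}^{n-1}(-1)^j j^{-1}z^j\Bigr) - 1 ,
\]
an entire function of $z$. For $z$ in a neighbourhood of $0$ one has $\log(1+z) = -\sum_{j\ge 1}(-1)^j j^{-1}z^j$, hence $\log(1+z) + \sum_{j=1}^{n-1}(-1)^j j^{-1}z^j = -\sum_{j\ge n}(-1)^j j^{-1}z^j =: \phi(z)$, which is $O(z^n)$; consequently, near $0$,
\[
(1+z)\exp\Bigl(\sum_{j=1}^{n-1}(-1)^j j^{-1}z^j\Bigr) = e^{\log(1+z)}\,e^{\sum_{j=1}^{n-1}(-1)^j j^{-1}z^j} = e^{\phi(z)} = 1 + O(z^n).
\]
Thus $f$ vanishes to order at least $n$ at the origin, and since $f$ is entire we may write $f(z) = z^n g(z)$ with $g(z) = \sum_{k\ge 0}c_k z^k$ entire; in particular $\sum_{k\ge 0}|c_k|\,r^k < \infty$ for every $r > 0$.

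Next I would transfer this to the operator setting. All of the operators occurring in $R_n(A)$ are polynomials in $A$, hence mutually commuting, so the holomorphic functional calculus for the entire functions $f$ and $g$ amounts to substituting $A$ into their everywhere-convergent Taylor series; a routine rearrangement of absolutely convergent series (valid for every $A \in B(H)$, not only for $\lVert A\rVert < 1$, precisely because $f$ and $g$ are entire) yields
\[
R_n(A) = f(A) = A^n g(A) = \sum_{k=0}^{\infty} c_k\, A^{n+k},
\]
the series converging in operator norm since $\sum_k |c_k|\,\lVert A\rVert^{k} < \infty$.

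It then remains to do the trace-ideal bookkeeping. Assume $A \in J_n$. By the generalized Hölder inequality for trace ideals (Simon, \cite{Sim}), $A^n = A\cdots A \in J_1$, and since $J_1$ is a two-sided ideal with $\lVert ST\rVert_1 \le \lVert S\rVert_1\lVert T\rVert$ we get $A^{n+k} = A^n A^k \in J_1$ with $\lVert A^{n+k}\rVert_1 \le \lVert A^n\rVert_1\,\lVert A\rVert^{k}$. Hence
\[
\sum_{k=0}^{\infty} |c_k|\,\lVert A^{n+k}\rVert_1 \le \lVert A^n\rVert_1 \sum_{k=0}^{\infty} |c_k|\,\lVert A\rVert^{k} < \infty ,
\]
because $g$ is entire; as $(J_1,\lVert\cdot\rVert_1)$ is a Banach space, the series $\sum_k c_k A^{n+k}$ converges in $J_1$, and since $\lVert\cdot\rVert$-convergence is dominated by $\lVert\cdot\rVert_1$-convergence, its $J_1$-limit coincides with the operator-norm limit $f(A) = R_n(A)$. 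Therefore $R_n(A)\in J_1$, as claimed. I expect the only genuine subtlety to lie in the first paragraph — recognizing that the prescribed correction term is the truncated logarithmic series, which is exactly what forces the vanishing of the leading $n-1$ powers and produces the factorization $f(z) = z^n g(z)$ with $g$ entire; once that is in place, everything else is standard trace-ideal manipulation (generalized Hölder plus completeness of $J_1$), with the only technical care needed being the justification that $R_n(A) = f(A) = \sum_k c_k A^{n+k}$ for arbitrary bounded $A$.
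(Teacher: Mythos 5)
The paper simply cites this lemma from Simon \cite{Sim} without reproducing a proof, and your argument is precisely the standard one given there: recognize that the scalar function $(1+z)\exp\bigl(\sum_{j=1}^{n-1}(-1)^j j^{-1}z^j\bigr)-1$ vanishes to order $n$ at the origin (being $e^{\phi(z)}-1$ with $\phi$ the tail of the logarithm series), factor out $z^n$, and then use the trace-ideal H\"older inequality together with completeness of $(J_1,\lVert\cdot\rVert_1)$. Your proof is correct and matches the expected approach.
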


This associates a trace class operator to any given $A \in J_n$ and allows us to define the \emph{regularized determinant} of $A$ as follows:

\begin{definition}\label{detndef}
\cite{Sim} For $A \in J_n$, define $\det_n(I + A) = \det(I + R_n(A))$.
\end{definition}

Note that this definition implies that $\det_1(I+A) = \det(I+A)$, the usual Fredholm determinant. We will use these two notations interchangeably from here on. Also with this definition, we can now give a very similar product formula for the regularized determinant of a Hilbert--Schmidt operator, with each term having an exponential factor to help convergence along with some other interesting properties. This corresponds to the $n=2$ case of the following result.

\begin{thm}\label{detnthm}
\emph{\cite{Sim}} For $A \in J_n$, we have
\begin{equation}
\textstyle \det_n(I + \mu A) = \displaystyle\prod_{k=1}^{N(A)} \left [ (1+\mu\lambda_k(A))\exp \left (\sum_{j=1}^{n-1} (-1)^j j^{-1} \lambda_k(A)^j \mu^j \right ) \right ].
\end{equation}
\end{thm}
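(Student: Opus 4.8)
The plan is to reduce the claimed product formula to the definition $\det_n(I+\mu A) = \det(I + R_n(\mu A))$ together with the product formula \eqref{deteqn} for the ordinary Fredholm determinant. Since $A \in J_n$, the lemma guarantees $R_n(\mu A) \in J_1$ for every fixed $\mu \in \mathbb{C}$, so $\det(I + R_n(\mu A))$ is defined and, by \eqref{deteqn}, equals $\prod_k (1 + \nu_k)$ where the $\nu_k$ are the nonzero eigenvalues of $R_n(\mu A)$. The first key step is therefore a spectral-mapping computation: I would show that if $\lambda$ is an eigenvalue of $\mu A$ (equivalently, $\mu\lambda_k(A)$ for $\lambda_k(A)$ an eigenvalue of $A$), then the corresponding eigenvalue of $I + R_n(\mu A)$ is exactly $(1+\mu\lambda_k(A))\exp\bigl(\sum_{j=1}^{n-1}(-1)^j j^{-1}\lambda_k(A)^j\mu^j\bigr)$. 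This follows because $I + R_n(\mu A) = (I + \mu A)\exp\bigl(\sum_{j=1}^{n-1}(-1)^j j^{-1}(\mu A)^j\bigr)$ is an (operator) function of $\mu A$, so on a (generalized) eigenvector for the eigenvalue $\mu\lambda_k(A)$ it acts by the corresponding scalar; more carefully, one passes to the upper-triangular (Ringrose / Schur) form for the compact operator $\mu A$ and observes that the stated holomorphic function of $\mu A$ is again upper-triangular with the indicated diagonal entries.

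The second step is to match up the indexing and handle convergence. The eigenvalue $1$ of $I + R_n(\mu A)$ (coming from $\lambda_k(A) = 0$, or from the kernel) contributes a factor $(1+0)\exp(0) = 1$ and so may be dropped; thus the nonzero eigenvalues of $R_n(\mu A)$ correspond bijectively, with multiplicity, to the nonzero eigenvalues $\lambda_k(A)$ of $A$, each producing exactly the bracketed factor in the statement. This gives
\begin{equation*}
\det_n(I+\mu A) = \det(I + R_n(\mu A)) = \prod_{k=1}^{N(A)} \left[(1+\mu\lambda_k(A))\exp\left(\sum_{j=1}^{n-1}(-1)^j j^{-1}\lambda_k(A)^j\mu^j\right)\right],
\end{equation*}
with the product absolutely convergent because $R_n(\mu A) \in J_1$ forces $\sum_k |\nu_k| < \infty$, and $\nu_k$ is precisely the bracketed factor minus $1$.

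The main obstacle I anticipate is the rigorous justification of the spectral-mapping / triangularization step, i.e. verifying that the nonzero eigenvalues (with algebraic multiplicities) of the holomorphic function $g(\mu A) := (I+\mu A)\exp\bigl(\sum_{j=1}^{n-1}(-1)^j j^{-1}(\mu A)^j\bigr)$ of the compact operator $\mu A$ are exactly $g$ applied to the nonzero eigenvalues of $\mu A$, including the delicate bookkeeping of multiplicities and the fact that eigenvalue $0$ of $\mu A$ can map to a nonzero eigenvalue of $g(\mu A)$ only when $g(0) \neq 0$ — which here does not happen since $g(0) = 1$ but the relevant contribution is the trivial factor $1$. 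Concretely I would invoke the existence of an orthonormal basis in which a compact operator is upper-triangular with its eigenvalues on the diagonal (listed with multiplicity), note that polynomials and convergent power series in an upper-triangular operator remain upper-triangular with the function applied entrywise on the diagonal, and then appeal to the fact that the Fredholm determinant depends only on the diagonal (eigenvalue) data. Alternatively — and this is likely the cleanest route, since it is essentially how Simon proves it — I would establish the identity first for finite-rank $A$ by a direct finite-dimensional computation (where everything is an honest matrix determinant and the factorization $\det_n(I+\mu A) = \det(I+\mu A)\exp(-\sum_{j=1}^{n-1}(-1)^{j+1}j^{-1}\mu^j\operatorname{Tr}(A^j))$ is elementary), and then pass to the general case $A \in J_n$ by approximating $A$ in the $J_n$-norm by finite-rank operators and using the continuity of $\det_n$ on $J_n$ together with the continuity of both sides as functions of the eigenvalue sequence.
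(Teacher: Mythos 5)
The paper does not give a proof of this theorem: it is imported verbatim from Simon's monograph \cite{Sim} (the citation tag in the statement is the "proof"), so there is no in-paper argument to compare against. Judged on its own terms, your proposal is essentially correct. Reducing $\textstyle\det_n(I+\mu A)=\det(I+R_n(\mu A))$ to equation (\ref{deteqn}) and then computing the nonzero eigenvalues of $R_n(\mu A)=g(\mu A)-I$, where $g(w)=(1+w)\exp\bigl(\sum_{j=1}^{n-1}(-1)^jj^{-1}w^j\bigr)$, is a valid route. Your triangularization step is sound: by Ringrose's theorem a compact operator admits an orthonormal basis making it upper triangular with its nonzero eigenvalues (with algebraic multiplicity) on the diagonal, entire functions of an upper-triangular operator stay upper triangular with the function applied to the diagonal, and since $R_n(\mu A)\in J_1$ is again compact upper-triangular, its nonzero eigenvalues are exactly its nonzero diagonal entries $g(\mu\lambda_k(A))-1$. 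The factors equal to $1$ (from $\lambda_k=0$, or from nonzero $\lambda_k$ with $g(\mu\lambda_k)=1$) contribute nothing and can be inserted or removed freely, and absolute convergence follows from $\sum_k\mu_k(R_n(\mu A))<\infty$ together with Weyl's inequality $\sum_k|\nu_k|\le\sum_k\mu_k(R_n(\mu A))$, or more directly from $g(w)-1=O(w^n)$ and $\sum_k|\lambda_k(A)|^n<\infty$. The alternative you sketch at the end --- verify the identity for finite-rank $A$ by an honest matrix computation and then pass to general $A\in J_n$ using density of finite-rank operators in $J_n$ and $J_n$-continuity of $\textstyle\det_n$ --- is indeed the standard textbook path (and is closer to Simon's own treatment); its cost is that one must import the nontrivial continuity of $\textstyle\det_n$ on $J_n$ and the stability of the eigenvalue product under $J_n$-convergence, whereas your direct route instead leans on Ringrose triangularization and Lidskii-type identification of the Fredholm determinant with the diagonal product. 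Either is a complete argument once those cited tools are granted.
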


These regularized determinants are related to the usual Fredholm determinant of $1+A$ for trace class operators $A$ in the following fashion.

\begin{thm}
\emph{\cite{Sim}} For $A \in J_1$, we have
\begin{equation}
\textstyle \det_n(I + \mu A) = \det(I+A) \displaystyle \exp \left ({\sum_{j=1}^{n-1} {(-1)^j j^{-1} \text{Tr}(A^j)}} \right ).
\end{equation}
\end{thm}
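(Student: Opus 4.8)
The plan is to reduce the identity to the product formulas already established, namely Theorem~\ref{detnthm} applied to the operator $A$ (now assumed trace class, hence in $J_n$ for every $n$) together with the product representation (\ref{deteqn}) of the Fredholm determinant. First I would write, using Theorem~\ref{detnthm} with $\mu A$ in place of the generic argument,
\begin{equation*}
\textstyle\det_n(I+\mu A)=\displaystyle\prod_{k=1}^{N(A)}\Bigl[(1+\mu\lambda_k(A))\exp\Bigl(\sum_{j=1}^{n-1}(-1)^j j^{-1}\lambda_k(A)^j\mu^j\Bigr)\Bigr],
\end{equation*}
and separately, from (\ref{deteqn}), $\det(I+\mu A)=\prod_{k=1}^{N(A)}(1+\mu\lambda_k(A))$. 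Dividing the two products term by term (which is legitimate once one knows each factor of the first product and each factor of the second are nonzero for all but finitely many $k$, and that both products converge absolutely — both facts coming from $A\in J_1$ and hence $\sum_k|\lambda_k(A)|<\infty$), the $(1+\mu\lambda_k(A))$ factors cancel and one is left with
\begin{equation*}
\frac{\det_n(I+\mu A)}{\det(I+\mu A)}=\prod_{k=1}^{N(A)}\exp\Bigl(\sum_{j=1}^{n-1}(-1)^j j^{-1}\lambda_k(A)^j\mu^j\Bigr)=\exp\Bigl(\sum_{j=1}^{n-1}(-1)^j j^{-1}\mu^j\sum_{k=1}^{N(A)}\lambda_k(A)^j\Bigr).
\end{equation*}

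The second main step is to identify $\sum_{k=1}^{N(A)}\lambda_k(A)^j$ with $\mathrm{Tr}(A^j)$. For $j\ge 1$ and $A\in J_1$, the operator $A^j$ is again trace class, and Lidskii's theorem (which underlies the chain of results quoted from \cite{Sim}; one may also invoke it directly) gives $\mathrm{Tr}(A^j)=\sum_k\lambda_k(A^j)=\sum_k\lambda_k(A)^j$, the last equality because the nonzero eigenvalues of $A^j$ are exactly the $j$-th powers of those of $A$, with matching multiplicities. Substituting this in yields
\begin{equation*}
\textstyle\det_n(I+\mu A)=\det(I+\mu A)\,\exp\Bigl(\sum_{j=1}^{n-1}(-1)^j j^{-1}\mu^j\,\mathrm{Tr}(A^j)\Bigr),
\end{equation*}
which is the claimed formula (the statement as printed absorbs $\mu$ into $A$ on the left-hand side; I would either carry $\mu$ throughout as above and then set $\mu=1$, or note the two are equivalent by rescaling).

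The main obstacle is purely bookkeeping around convergence: one must justify that the infinite products may be divided factorwise and that the exponentials may be pulled out of the product into a single exponential of a sum, i.e. that $\sum_k\bigl|\sum_{j=1}^{n-1}(-1)^jj^{-1}\lambda_k(A)^j\mu^j\bigr|<\infty$. Since only finitely many powers $j=1,\dots,n-1$ occur and $\sum_k|\lambda_k(A)|<\infty$ with $|\lambda_k(A)|\to0$, we have $\sum_k|\lambda_k(A)|^j<\infty$ for each such $j$, so the double sum converges absolutely and the interchange is valid; this is the one place where the hypothesis $A\in J_1$ (rather than merely $A\in J_n$) is essential, since otherwise $\det(I+\mu A)$ and the individual exponential factors need not make sense. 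Everything else is algebraic manipulation of the already-quoted product formulas, so I expect no conceptual difficulty beyond this convergence check.
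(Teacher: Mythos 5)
The paper states this result with a citation to \cite{Sim} and does not supply its own proof, so there is no in-paper argument to compare against; your derivation from Theorem~\ref{detnthm}, equation~(\ref{deteqn}) and Lidskii's theorem is the standard route and is correct. Two small remarks. First, rather than dividing the two infinite products factorwise (which needs a word of caution when $1+\mu\lambda_k(A)=0$ for some $k$, in which case both determinants vanish and the ratio is indeterminate even though the final identity still holds trivially), it is slightly cleaner to split the single absolutely convergent product of Theorem~\ref{detnthm} directly into $\prod_k(1+\mu\lambda_k(A))\cdot\prod_k\exp\bigl(\sum_{j=1}^{n-1}(-1)^jj^{-1}\lambda_k(A)^j\mu^j\bigr)$; the convergence estimate in your final paragraph is precisely what licenses this splitting, and the remaining steps are unchanged. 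Second, you are right that the printed statement appears to have dropped the $\mu$'s on the right-hand side: the identity you actually derive, $\det_n(I+\mu A)=\det(I+\mu A)\exp\bigl(\sum_{j=1}^{n-1}(-1)^jj^{-1}\mu^j\,\text{Tr}(A^j)\bigr)$, is the intended one and reduces to the printed form upon setting $\mu=1$.
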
 

These ``regularized determinants", just as the Fredholm determinants, define an entire function and will be key to the precise formulation of our results. This will be because our construction will not always create a trace class operator for which the standard Fredholm determinant would apply. This will be the case, in particular, for the Riemann zeta function, for which the regularized determinant $\det_2$ will be needed; see Theorems \ref{xithm} and \ref{zetathm} in Section 5.4 below. However, we will show that for any entire function of finite order and for many meromorphic functions, our construction will give an operator that is at least in some $J_p$ and thus the regularized determinant will apply to it. 
\section{Derivative Operator on Weighted Bergman Spaces}

The search for an operator to possibly take the place of the Frobenius in the proof of the Weil conjectures led us to consider the derivative operator. A treatment examining the derivative operator on $L^2(\mathbb{R}, e^{-2ct}dt)$ and its use to create a \lq quantized number theory' can be found in the research monograph \cite{HerLap1}, as well as in the accompanying articles \cite{HerLap2}, \cite{HerLap3}, \cite{HerLap4} and \cite{Lap}. 

This paper takes a different direction with the derivative operator. We begin by following the treatment in \cite{AtzBri} in developing properties of the derivative operator on a certain family of weighted Bergman spaces. We will then continue beyond their results and use all of this to create an operator that might properly take the place of the Frobenius. We begin by recalling the definitions of the spaces we will be working with. (See, e.g., \cite{Hed}, for a general reference about Bergman spaces.)

\begin{definition}
We define a \emph{weight function} to be a positive continuous function $w$ on $\mathbb{C}$. Then, for $1 \leq p \leq \infty$, we define the \emph{weighted $L^p$ spaces} to be $L_w^p(\mathbb{C})$, the space of functions on $\mathbb{C}$ such that $fw \in L^p(\mathbb{C}, d\lambda)$, where $\lambda$ is the Lebesgue measure on $\mathbb{R}^2$, and equipped with the norm $\lVert f \rVert_{L^p_w} = \lVert fw \rVert_{L^p(\mathbb{R}^2)}$. Next, denote by $B_w^p$ the subspace of entire functions in $L_w^p$; then, $B_w^p$ is called a \emph{weighted Bergman space of entire functions}. 
\end{definition}

Note that the convention above for functions $f \in L^p_w$ would be those for which $\int_{\mathbb{C}}{|f|^pw^p d\lambda}<\infty$ instead of $\int_{\mathbb{C}}{|f|^p w d\lambda} < \infty$. Then we have the following basic fact about these spaces.

\begin{thm}
For $p \geq 1$, $B_w^p$ is a closed subspace of $L_w^p$ and hence is a Banach space. Also, for $p=2$, $B_w^2$ is a Hilbert space.
\end{thm}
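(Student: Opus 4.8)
The plan is to reduce everything to the classical fact that a limit of holomorphic functions which is uniform on compact sets is again holomorphic. First note that $L_w^p(\mathbb{C})$ is itself a Banach space: by the very definition of the norm, the map $f \mapsto fw$ is an isometry of $L_w^p$ into $L^p(\mathbb{C}, d\lambda)$, and it is onto because $w$ is continuous and strictly positive (so $1/w$ is continuous and $h/w \in L_w^p$ for every $h \in L^p$); completeness of $L_w^p$ thus follows from that of $L^p$. Also, $B_w^p$ is trivially a linear subspace of $L_w^p$ since sums and scalar multiples of entire functions are entire. Consequently, once we know $B_w^p$ is closed in $L_w^p$, it is automatically a Banach space; and for $p=2$ it is a closed subspace of the Hilbert space $L_w^2$ (with inner product $\langle f,g\rangle = \int_{\mathbb{C}} f\bar g\, w^2\, d\lambda$, which induces $\lVert\cdot\rVert_{L^2_w}$), hence a Hilbert space.

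The heart of the matter is the pointwise estimate: for every compact $K \subset \mathbb{C}$ there is a constant $C_K > 0$ with
\begin{equation}
\sup_{z \in K} |f(z)| \leq C_K \lVert f \rVert_{L^p_w} \qquad \text{for all entire } f,
\end{equation}
with the obvious modification $\sup_K |f| \le \lVert fw\rVert_{\infty}/\inf_K w$ when $p=\infty$. To prove it for $p < \infty$ I would use that $|f|^p$ is subharmonic on $\mathbb{C}$ whenever $f$ is entire and $p>0$ (since $\log|f|$ is subharmonic and $t \mapsto e^{pt}$ is increasing and convex), so that the sub-mean-value inequality over the unit disc gives
\begin{equation}
|f(z)|^p \leq \frac{1}{\pi} \int_{D(z,1)} |f|^p\, d\lambda \leq \frac{1}{\pi \, \delta_K^{\,p}} \int_{\mathbb{C}} |f|^p w^p\, d\lambda = \frac{1}{\pi \, \delta_K^{\,p}}\, \lVert f \rVert_{L^p_w}^p,
\end{equation}
where $\delta_K := \min\{ w(\zeta) : \operatorname{dist}(\zeta,K) \le 1\} > 0$, the minimum being attained and positive because $w$ is continuous and positive on the compact $1$-neighbourhood of $K$. (One could instead bypass subharmonicity via Cauchy's integral formula on circles $|\zeta-z|=r$, integrating in $r \in (0,1)$ and applying Hölder's inequality; either route works.)

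With this in hand, let $(f_n)$ be Cauchy in $B_w^p$. By completeness of $L_w^p$ we have $f_n \to f$ in $L_w^p$ for some $f \in L_w^p$, and after passing to a subsequence we may assume $f_n \to f$ $\lambda$-almost everywhere. Applying the estimate to $f_n - f_m$ shows $(f_n)$ is uniformly Cauchy on every compact subset of $\mathbb{C}$, so $f_n$ converges, uniformly on compacta, to a function $g$ which is entire by Weierstrass's theorem. In particular $f_n \to g$ everywhere, hence $f = g$ a.e.; so $f$ has an entire representative, i.e. $f \in B_w^p$. This proves $B_w^p$ is closed in $L_w^p$, and the remaining assertions follow as explained above.

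I do not anticipate a serious obstacle. The only points requiring care are the justification of the key estimate — namely subharmonicity of $|f|^p$ combined with the uniform positive lower bound for $w$ on a neighbourhood of $K$ — and the (routine but essential) passage to an a.e.-convergent subsequence in order to identify the $L_w^p$-limit with the locally uniform holomorphic limit.
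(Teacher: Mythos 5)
The paper states this theorem without proof, presenting it simply as ``the following basic fact about these spaces,'' so there is no proof in the paper to compare against. Your argument is correct and is the standard one: the crux is the local estimate $\sup_{K}|f| \le C_K \lVert f\rVert_{L^p_w}$ for every compact $K$, obtained from subharmonicity of $|f|^p$ (or, equivalently, Cauchy's formula plus H\"older) together with the uniform positive lower bound of the continuous positive weight $w$ on the compact $1$-neighbourhood of $K$; this makes every $L^p_w$-Cauchy sequence of entire functions locally uniformly Cauchy, whose limit is entire by the Weierstrass theorem, and passing to an a.e.-convergent subsequence identifies this locally uniform limit with the $L^p_w$-limit, proving closedness. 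Your reduction of completeness of $L^p_w$ to that of $L^p$ via the isometry $f \mapsto fw$, and the inner product $(f,g) = \int_{\mathbb{C}} \overline{f} g\, w^2\, d\lambda$ for $p=2$, are also exactly right (and match the inner product used later in the paper). The only cosmetic remark is that for $p=\infty$ no subsequence extraction is needed, since $L^\infty_w$-convergence already gives uniform a.e.\ convergence; your argument otherwise covers that case as written.
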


Now we consider the differential operator $D = \frac{d}{dz}$ on the space $B_w^p$ and examine its properties; including for particular choices of $w$ and $p$.

Consider the following types of weight functions: $\displaystyle w(z) = e^{-\phi(|z|)}$, where $\phi$ is a nonnegative, concave, monotone (i.e., nondecreasing), subadditive function on $\mathbb{R}_+ = [0,\infty)$ such that $w(0) = 0$ and 
\begin{equation}
 \lim_{t \to +\infty} \frac{\phi(t)}{\log t} = +\infty.
\end{equation}
We next define
\begin{equation} \label{defa}
a = \lim_{t\to+\infty} \frac{\phi(t)}{t}.
\end{equation} 

We then have the following results in this situation (with $\mathbb{N}_0 := \{0,1,2,...\}$).

\begin{thm} \label{theorem1}
\emph{\cite{AtzBri}} Let $1 \leq p \leq \infty$ and $w$ be a weight function with constant $a$, as in (\ref{defa}) above. Then, \\
1) The differentiation operator $D = \frac{d}{dz}$ is a bounded linear operator on $B_w^p$; \\
2) For all $r > 0$, and for $n \in \mathbb{N}_0$, we have the bound $\lVert D^n \rVert \leq n! r^{-n} e^{\phi(r)}$. \\
\end{thm}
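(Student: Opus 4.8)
The plan is to deduce both assertions from the classical Cauchy estimate for derivatives of holomorphic functions, combined with the subadditivity of $\phi$. Fix $f \in B_w^p$, $n \in \mathbb{N}_0$, and $r > 0$. Since $f$ is entire, Cauchy's integral formula for the $n$-th derivative, parametrized over the circle $\zeta = z + re^{i\theta}$, gives
\[
f^{(n)}(z) = \frac{n!}{2\pi r^n}\int_0^{2\pi} f(z + re^{i\theta})\,e^{-in\theta}\,d\theta, \qquad z \in \mathbb{C},
\]
and hence the pointwise bound $|f^{(n)}(z)| \le \dfrac{n!}{2\pi r^n}\int_0^{2\pi}|f(z + re^{i\theta})|\,d\theta$. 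Differentiation is obviously linear, so the only real issue is this quantitative bound.

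The key step is to absorb the weight into this circular average. Since $\phi$ is nondecreasing and subadditive, for every $z$ and every $\theta$ we have $\phi(|z + re^{i\theta}|) \le \phi(|z| + r) \le \phi(|z|) + \phi(r)$, which rearranges to
\[
w(z) = e^{-\phi(|z|)} \le e^{\phi(r)}\,e^{-\phi(|z + re^{i\theta}|)} = e^{\phi(r)}\,w(z + re^{i\theta}).
\]
Multiplying the Cauchy bound by $w(z)$ and inserting this inequality yields, for all $z \in \mathbb{C}$,
\[
|(D^n f)(z)|\,w(z) \le \frac{n!\,e^{\phi(r)}}{2\pi r^n}\int_0^{2\pi} |f(z + re^{i\theta})|\,w(z + re^{i\theta})\,d\theta .
\]

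It then remains to take the $L^p$-norm in $z$. For $1 \le p < \infty$ I would apply Minkowski's integral inequality to move $\|\cdot\|_{L^p}$ inside the $\theta$-integral, while for $p = \infty$ the same conclusion follows directly by taking the essential supremum; in either case, translation invariance of Lebesgue measure on $\mathbb{R}^2 \cong \mathbb{C}$ gives $\bigl\| |f(\cdot + re^{i\theta})|\,w(\cdot + re^{i\theta}) \bigr\|_{L^p} = \|fw\|_{L^p} = \|f\|_{L_w^p}$ for each fixed $\theta$. Putting these together produces
\[
\|D^n f\|_{L_w^p} \le n!\,r^{-n}\,e^{\phi(r)}\,\|f\|_{L_w^p}.
\]
In particular $f^{(n)}$, which is entire, lies in $L_w^p$, so $D^n$ maps $B_w^p$ into itself and $\|D^n\| \le n!\,r^{-n}e^{\phi(r)}$ for every $r > 0$, which is assertion~2). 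Specializing to $n = 1$ (and any fixed $r > 0$) then shows that $D$ is a bounded linear operator on $B_w^p$, which is assertion~1).

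I expect no serious obstacle here: the argument is essentially a weighted Cauchy estimate. The two points that need a little care are the endpoint $p = \infty$ and the measurability/finiteness needed to legitimately apply Minkowski's integral inequality, both of which are routine since entire functions are $C^\infty$ and the circle integrals are plainly finite. The hypothesis actually doing the work is the monotonicity and subadditivity of $\phi$, which is precisely what converts the Cauchy radius $r$ into the harmless multiplicative factor $e^{\phi(r)}$; by contrast, the constant $a$ of (\ref{defa}) and the growth condition $\phi(t)/\log t \to \infty$ play no role in this particular estimate (they become relevant only when one later computes the exact operator norm and the spectrum of $D$).
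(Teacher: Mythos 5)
Your argument is correct and follows essentially the same strategy as the paper: Cauchy's integral formula for $f^{(n)}$, then monotonicity and subadditivity of $\phi$ to absorb the weight at the cost of the factor $e^{\phi(r)}$. The only cosmetic difference is that you perform the weight absorption pointwise and then invoke Minkowski's integral inequality, whereas the paper applies Jensen/H\"older to the circular average, raises to the $p$th power, changes variables, and then uses $|\phi(|z|)-\phi(|z-re^{i\theta}|)|\leq\phi(r)$ --- the two routes are equivalent.
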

\begin{proof}
We will prove 2) noting that 1) follows from it. Suppose that $f \in B_p^w$ and  $r > 0$. Cauchy's formula for the $n^{th}$ derivative of $f$ reads as
\begin{equation} \label{CIF}
D^n f(z_0) = \frac{n!}{2\pi i} \int_{|z|=r} {\frac{f(z_0+z)}{z^{n+1}} dz}.
\end{equation}
We now consider the case $p=\infty$. Let $z_0, z \in \mathbb{C}$ with $|z| = r$. Then since $\phi$ is subadditive and monotonic, we have: $\phi(|z_0+z|) \leq \phi(|z_0|+|z|) \leq \phi(|z_0|)+\phi(|z|)$. Also, $\lVert f \rVert_{\infty, w} =\sup_{z \in \mathbb{C}} |f(z)|e^{-\phi(|z|)} \geq |f(z_0+z)|e^{-\phi(|z_0+z|)}$ by the definition of the norm. This leads to: $|f(z_0+z)| \leq \lVert f \rVert_{\infty, w} e^{\phi(|z_0+z|)}\leq \lVert f \rVert_{\infty, w} e^{\phi(|z_0|)} e^{\phi(|z|)}$. Then by (\ref{CIF}), we have for any $z \in \mathbb{C}$ that
\begin{equation*}
|D^nf(z)| \leq n! r^{-n} \sup_{|z|=r} |f(z_0+z)| \leq n! r^{-n} \lVert f \rVert_{\infty, w} e^{\phi(|z|)} e^{\phi(r)}.
\end{equation*}
Thus $\lVert D^n f \rVert_{\infty, w} = \displaystyle \sup_{z \in \mathbb{C}} {|D^nf(z)|e^{-\phi(|z|)}} \leq n! r^{-n} e^{\phi(r)} \lVert f \rVert_{\infty, w}$. Therefore, we have that $\lVert D^n \rVert \leq n! r^{-n} e^{\phi(r)}$. Next, we turn to the case $1 \leq p < \infty$. Let $z \in \mathbb{C}$. Applying H\"{o}lder's inequality in (\ref{CIF}) yields
\begin{equation*}
|D^n f(z)| \leq \frac{n!}{(2\pi)^{\frac{1}{p}} r^n }\left ( \int_{0}^{2\pi} {|f(z+re^{i\theta})|^pd\theta}\right )^{\frac{1}{p}}.
\end{equation*}
This leads to
\begin{equation*}
\int_{\mathbb{C}} {|D^n f(z)|^p e^{-p\phi(|z|)} d\lambda(z)} \leq \frac{n!^p}{2\pi r^{pn}} \int_{0}^{2\pi} { \left ( \int_{\mathbb{C}} {|f(z+re^{i\theta})|^pe^{-p\phi(|z|)} d\lambda(z)} \right ) d\theta}.
\end{equation*}
By making a change of variable, we can rewrite this as 
\begin{equation} \label{Dn}
\int_{\mathbb{C}} {|D^n f(z)|^p e^{-p\phi(|z|)} d\lambda(z)} \leq \frac{n!^p}{2\pi r^{pn}} \int_{0}^{2\pi} { \left ( \int_{\mathbb{C}} {|f(z)|^pe^{-p\phi(|z-re^{i\theta}|)} d\lambda(z)} \right ) d\theta}.
\end{equation}
Using the triangle inequality for $\phi$, 
\begin{equation*}
|\phi(|z|) - \phi(|z-re^{i\theta}|)| \leq \phi(|re^{i\theta}|) = \phi(r),
\end{equation*}
in the inner integral on the right-hand side of (\ref{Dn}), we obtain that
\begin{align*}
\int_{\mathbb{C}} {|f(z)|^pe^{-p\phi(|z-re^{i\theta}|)} d\lambda(z)} & = \int_{\mathbb{C}} {|f(z)|^pe^{-p\phi(|z|)}e^{p(\phi(|z|)-\phi(|z-re^{i\theta}|))} d\lambda(z)} \\
& \leq e^{p\phi(r)} \int_{\mathbb{C}} {|f(z)|^pe^{-p\phi(|z|)} d\lambda(z)} \leq e^{p\phi(r)} \lVert f \rVert_{p, w}^p.
\end{align*}
Applying this estimate to (\ref{Dn}) then yields 
\begin{equation*}
\int_{\mathbb{C}} {|D^n f(z)|^p e^{-p\phi(|z|)} d\lambda(z)} \leq \frac{n!^p}{2\pi r^{pn}} \int_{0}^{2\pi} { e^{p\phi(r)} \lVert f \rVert_{p, w}^p d\theta} = \frac{n!^p e^{p\phi(r)}}{r^pn} \lVert f \rVert_{p, w}^p.
\end{equation*}
Thus $\lVert D^n f \rVert_{p, w} \leq n! r^{-n} e^{\phi(r)} \lVert f \rVert_{p, w}$, and it follows that $\lVert D^n \rVert \leq n! r^{-n} e^{\phi(r)}$. This concludes the proof of the theorem.  \qedsymbol
\end{proof}

We also have the following result about the spectrum, $\sigma(D)$, of $D$. (See, e.g., \cite{Rud} for a discussion of spectral theory and the functional calculus in this context.)

\begin{thm} \label{theorem2}
\emph{\cite{AtzBri}} Under the conditions of Theorem \ref{theorem1}, the spectrum $\sigma(D)$ is given for any $a \geq 0$ by 
\begin{equation}
\sigma(D) = \Delta_a : = \{z \in \mathbb{C}: |z| \leq a\}. 
\end{equation}
In particular, if $a=0$, then $\Delta_a = \{0\}.$
\end{thm}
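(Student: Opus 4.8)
The plan is to prove the two inclusions $\sigma(D)\subseteq\Delta_a$ and $\Delta_a\subseteq\sigma(D)$ separately: the first by estimating the spectral radius of $D$, and the second by exhibiting an abundance of eigenvalues and then invoking closedness of the spectrum.

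For the inclusion $\sigma(D)\subseteq\Delta_a$ I would bound the spectral radius $\rho(D)=\lim_{n\to\infty}\lVert D^n\rVert^{1/n}$ using part 2) of Theorem \ref{theorem1}, namely $\lVert D^n\rVert\le n!\,r^{-n}e^{\phi(r)}$ for \emph{every} $r>0$. The crucial point is that $r$ may be chosen to depend on $n$. Fix $\varepsilon>0$; since $\phi(t)/t\to a$ there is $T$ with $\phi(t)\le(a+\varepsilon)t$ for $t\ge T$, and for $n$ large I would take $r_n=n/(a+\varepsilon)$, so that $\phi(r_n)\le n$, which gives $\lVert D^n\rVert\le n!\,(a+\varepsilon)^n n^{-n}e^{n}$ and hence $\lVert D^n\rVert^{1/n}\le(a+\varepsilon)\,e\,(n!)^{1/n}/n$. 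By Stirling's formula $(n!)^{1/n}/n\to e^{-1}$, so $\rho(D)\le a+\varepsilon$; letting $\varepsilon\downarrow 0$ gives $\rho(D)\le a$, i.e. $\sigma(D)\subseteq\Delta_a$. This already settles the case $a=0$: then $\rho(D)=0$, and since $D$ is a bounded operator on a complex Banach space its spectrum is nonempty, so $\sigma(D)=\{0\}=\Delta_0$.

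For the reverse inclusion, assume $a>0$ and let $\lambda\in\mathbb{C}$ with $|\lambda|<a$. The candidate eigenfunction is $e_\lambda(z):=e^{\lambda z}$, which is entire and satisfies $De_\lambda=\lambda e_\lambda$, so it suffices to check that $e_\lambda\in B_w^p$. For $1\le p<\infty$ this reduces, after using $\mathrm{Re}(\lambda z)\le|\lambda|\,|z|$ and passing to polar coordinates, to the convergence of $\int_0^\infty e^{-p(\phi(r)-|\lambda| r)}r\,dr$; since $\phi(r)/r\to a>|\lambda|$ one has $\phi(r)-|\lambda| r\ge\tfrac12(a-|\lambda|)r$ for $r$ large, so the integrand decays exponentially and the integral converges. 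The case $p=\infty$ is similar, using that $\sup_{r\ge0}(|\lambda| r-\phi(r))<\infty$. Hence every $\lambda$ with $|\lambda|<a$ is an eigenvalue of $D$, so the open disk $\{|\lambda|<a\}$ lies in $\sigma(D)$; since $\sigma(D)$ is closed it contains the closure $\Delta_a$ of that disk, and together with the first inclusion this yields $\sigma(D)=\Delta_a$.

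The one genuinely delicate step is the spectral radius estimate: fixing $r$ and writing $\lVert D^n\rVert^{1/n}\le(n!)^{1/n}r^{-1}e^{\phi(r)/n}$ is useless, since $(n!)^{1/n}\to\infty$, so one must tie the choice $r_n\asymp n/a$ to $n$ and use the precise growth rate of $n!$ to see that the bound collapses to exactly $a$ rather than to a larger constant; optimizing the constant $\beta$ in $r_n=\beta n$ shows that $\beta=1/a$ is the sharp choice. One should also note that $e^{\lambda z}$ need not belong to $B_w^p$ when $|\lambda|=a$ (for suitable $\phi$ it does not), which is precisely why the boundary circle $|z|=a$ is recovered through closedness of the spectrum rather than directly as point spectrum.
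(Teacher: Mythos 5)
Your proof is correct and follows essentially the same route as the paper's: the upper bound on the spectral radius via the optimized choice $r_n \asymp n/(a+\epsilon)$ in the estimate $\lVert D^n\rVert \le n!\,r^{-n}e^{\phi(r)}$ together with Stirling, and the lower bound by exhibiting the eigenfunctions $e_\lambda(z)=e^{\lambda z}$ for $|\lambda|<a$. You are a bit more explicit than the paper on two minor points — invoking closedness (and nonemptiness) of $\sigma(D)$ to pass from the open disk to $\Delta_a$ and to settle the case $a=0$ — but these are clarifications, not a different argument.
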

\begin{proof}
Let $e_\lambda(z) = e^{\lambda z}$ for $\lambda \in \mathbb{C}$. Clearly, we have $De_\lambda = \lambda e_\lambda$ and so $e_\lambda$ is an eigenvector of the operator $D$ with eigenvalue $\lambda$, as long as $e_\lambda \in B_w^p$. However, if $|\lambda| < a$ and we write $z = re^{i\theta}$ and $\lambda = |\lambda|e^{i\beta}$, we have the following: 
\begin{equation}
|e_\lambda(z)e^{-\phi(|z|)}| = |e^{|\lambda|re^{i(\beta+\theta)}-\phi(r)}| = e^{|\lambda|r\cos(\beta+\theta) - \phi(r)} \leq e^{r \left (|\lambda|-\frac{\phi(r)}{r} \right )}.
\end{equation}
 But by (\ref{defa}), then this function is integrable, so $e_\lambda \in B_w^p$ for $|\lambda| < a$. Thus we have $\Delta_a \subseteq \sigma(D)$. To complete the proof we will show that $r(D)$, the spectral radius of $D$, satisfies the inequality $r(D) \leq a$. It suffices to show that $r(D) \leq a+\epsilon$ for any $\epsilon > 0$; so let us fix $\epsilon > 0$. Then again using (\ref{defa}), we see that there is $t_0 > 0$ such that $\phi(t) \leq (a+\epsilon) t$ for $t \geq t_0$. Thus by part 2 of Theorem \ref{theorem1} we have that $\lVert D^n \rVert \leq C n! r^{-n} e^{(a+\epsilon)r}$ for any $r > 0$, $n=1, 2, ...$, where $C$ is a constant depending only on $\epsilon$. Minimizing this expression with respect to $r$ yields the critical value $r = \frac{n}{a+\epsilon}$. Substituting this choice of $r$ gives $\lVert D^n \rVert \leq C \frac{n!e^n(a+\epsilon)^n}{n^n}$. Applying Stirling's formula gives that $\lVert D^n \rVert \leq f(n)$, where $f(n)$ is asymptotic to a constant times $\sqrt{n}(a+\epsilon)^n$. Thus we have $r(D) = \displaystyle \lim_{n \to \infty}  \lVert D^n \rVert^{\frac{1}{n}} \leq a+\epsilon$. We conclude that $r(D) = a$ and $\sigma(D) = \Delta_a$, as desired. \qedsymbol
\end{proof}

To further study this operator, we restrict our attention to the special case when $p=2$, where we actually have a Hilbert space with inner product given by $(f, g) = \int_{\mathbb{R}^2} {\overline{f}g e^{-2\phi(|z|)} d\lambda}$. It is convenient to have a particular simple orthonormal basis to deal with, and, since we are dealing with entire functions that are guaranteed to have convergent power series, it makes sense to look at polynomials to try to find this orthonormal basis. It turns out that all we need are monomials.

\begin{thm}
\emph{\cite{AtzBri}} There exist constants $c_n$ such that $\{u_n(z)\}$, where $u_n(z) = c_n z^n$, for $n \in \mathbb{N}_0$, forms an orthonormal basis for $B_w^2$. 
\end{thm}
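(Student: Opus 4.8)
The plan is to exploit the rotational symmetry of the weight $w(z) = e^{-\phi(|z|)}$, which depends only on $|z|$, to show that distinct monomials are automatically orthogonal, and then to normalize. First I would observe that for $m \neq n$,
\begin{equation*}
(z^m, z^n) = \int_{\mathbb{R}^2} \overline{z^m} z^n e^{-2\phi(|z|)} \, d\lambda(z) = \int_0^\infty \int_0^{2\pi} \rho^{m+n} e^{i(n-m)\theta} e^{-2\phi(\rho)} \rho \, d\theta \, d\rho,
\end{equation*}
writing $z = \rho e^{i\theta}$; the inner integral $\int_0^{2\pi} e^{i(n-m)\theta}\, d\theta$ vanishes whenever $m \neq n$, so the monomials $\{z^n\}_{n \in \mathbb{N}_0}$ are pairwise orthogonal. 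For $m = n$ the same computation gives $\|z^n\|^2 = 2\pi \int_0^\infty \rho^{2n+1} e^{-2\phi(\rho)}\, d\rho$, which I must check is finite and nonzero; finiteness follows from the growth condition $\lim_{t\to\infty} \phi(t)/\log t = +\infty$, which forces $e^{-2\phi(\rho)}$ to decay faster than any negative power of $\rho$, so the integrand is integrable at infinity, while near $\rho = 0$ the integrand is continuous (note $\phi$ is finite and continuous on $\mathbb{R}_+$), and positivity is clear since the integrand is positive. Hence I may set $c_n = \|z^n\|^{-1} = \left(2\pi \int_0^\infty \rho^{2n+1} e^{-2\phi(\rho)}\, d\rho\right)^{-1/2}$, and $u_n := c_n z^n$ gives an orthonormal system.

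Next I would verify each $u_n$ actually lies in $B_w^2$, which is immediate once $\|z^n\| < \infty$ is established, since monomials are entire. The remaining and genuinely substantive point is completeness: I must show that the closed linear span of $\{u_n\}$ is all of $B_w^2$. The natural approach is to take $f \in B_w^2$, expand it in its (globally convergent) Taylor series $f(z) = \sum_{n=0}^\infty a_n z^n$, and argue that the partial sums converge to $f$ in the $B_w^2$ norm, so that $f = \sum_n a_n z^n$ in $B_w^2$ and therefore $f$ lies in the closed span. To do this I would first note that, because the weight is radial, the coefficient functionals $f \mapsto a_n$ are continuous on $B_w^2$ (indeed $a_n = c_n^2 (z^n, f)$ by the orthogonality just proved together with term-by-term integration justified by the radial structure), so it suffices to control the tails. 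The cleanest route is: for fixed $R > 0$, the Taylor partial sums of $f$ converge to $f$ uniformly on $|z| \le R$; combine this with the decay of $e^{-2\phi(|z|)}$ at infinity and a uniform bound on $|f(z)| e^{-\phi(|z|)}$ coming from membership in $B_w^2$ (via a subharmonicity/mean-value estimate for the entire function $f$, exactly of the type used in the proof of Theorem \ref{theorem1}) to dominate the integral over $|z| > R$ and let $R \to \infty$.

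The main obstacle I anticipate is precisely this completeness argument — specifically, justifying that the Taylor series of an arbitrary $f \in B_w^2$ converges to $f$ \emph{in norm}, not merely pointwise or locally uniformly. The subtlety is that truncating the Taylor series could in principle increase the weighted $L^2$ mass near infinity; resolving this requires a quantitative handle on how fast $|a_n|$ decays (again via $a_n = c_n^2(z^n,f)$ and Cauchy--Schwarz, giving $|a_n| \le c_n \|f\|_{2,w}$) balanced against how fast $c_n^{-2} = \|z^n\|^2$ grows, so that $\sum_n |a_n|^2 \|z^n\|^2 = \sum_n |a_n|^2 c_n^{-2} \le \|f\|_{2,w}^2 \sum_n (\text{something summable})$ — in fact Bessel's inequality applied to the already-established orthonormal system $\{u_n\}$ gives $\sum_n |(u_n,f)|^2 \le \|f\|_{2,w}^2$ directly, and one then identifies $(u_n, f) = c_n^{-1} a_n$, so the series $\sum_n a_n z^n$ converges in $B_w^2$ to some $g$; the final step is to check $g = f$, which follows because both have the same Taylor coefficients (norm convergence implies coefficientwise convergence by continuity of the functionals $f \mapsto a_n$) and an entire function is determined by its Taylor coefficients. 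I would present the Bessel-inequality version as the main line of argument, as it sidesteps delicate interchange-of-limit issues.
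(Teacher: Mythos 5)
Your orthogonality computation via polar coordinates and the radial weight is exactly the paper's argument, and your normalization $c_n = \lVert z^n \rVert^{-1}$ matches as well. The one place you go beyond the paper is the completeness step: the paper dispatches it in a single sentence (``This orthogonal set is complete because every entire function has a convergent power series on $\mathbb{C}$''), whereas you correctly flag that pointwise or locally uniform convergence of the Taylor series does not by itself give convergence in the $B_w^2$-norm, and you close the gap by establishing the identity $(z^n, f) = a_n\lVert z^n\rVert^2$ relating the $n$-th Taylor coefficient $a_n$ of $f$ to the inner product (via Fubini and integration over circles of fixed radius), then invoking Bessel's inequality. This is a genuine and correct refinement of what the paper writes. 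One remark: once you have $(u_n, f) = c_n^{-1}a_n$, you can shortcut the remainder of your argument. Completeness of the orthonormal system $\{u_n\}$ is equivalent to the statement that the only $f \in B_w^2$ orthogonal to every $u_n$ is $f = 0$, and orthogonality to all $u_n$ forces every Taylor coefficient $a_n$ to vanish, hence $f \equiv 0$. This avoids having to show that the partial sums converge to $f$ in norm and then identify the limit by continuity of the coefficient functionals, which is what your Bessel-based route does in a slightly more roundabout way.
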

\begin{proof}
First note that $(z^n, z^m) = 0$ if $n \neq m$. This follows from a simple calculation using the fact that the weight function is radial, so that using polar coordinates, we have 
\begin{align*}
(z^n, z^m) & = \int_0^{2\pi} {\int_0^{\infty} {r^n e^{-in\theta} r^m e^{im\theta} e^{-2\phi(r)} rdrd\theta}} \\
& = \int_0^{2\pi} {e^{i(m-n)\theta }d\theta} \int_0^\infty {r^{n+m+1}e^{-2\phi(r)}dr} \\
& = 0
\end{align*}
 if $n \neq m$. Note that the integral on $r$ converges for any $n, m \in \mathbb{N}_0$ by the properties of our weight function. Thus the monomials form an orthogonal set. This orthogonal set is complete because every entire function has a convergent power series on $\mathbb{C}$. Thus, if we choose $c_n = \frac{1}{\lVert z^n \rVert}$, we normalize our set and, hence, the resulting sequence $\{u_n\}$ is an orthonormal basis for $B_w^2$. \qedsymbol
\end{proof}

Now we specialize further by choosing the family of weight functions given by $w(z) = e^{-|z|^\alpha}$ for $\alpha\in\mathbb{R}$ with $0 < \alpha \leq 1$. (Note that in the notation of (\ref{defa}) and of Theorem \ref{theorem2} above, we then have $a = 1$ if $\alpha = 1$ and $a=0$ if $0 < \alpha < 1$.) We will call the resulting Hilbert space $H_\alpha := B_w^2$. In this case, we can actually find the constants $c_n$ explicitly. 

\begin{thm}
\emph{\cite{AtzBri}} If $0 < \alpha \leq 1$, then for $n \in \mathbb{N}_0$, we have that
\begin{equation*}
\lVert z^n \rVert_{H_\alpha}^2 = \frac{2\pi}{\alpha} 2^{-\frac{2}{\alpha}(n+1)} \Gamma\left [\frac{2}{\alpha}(n+1)\right ].
\end{equation*}
\end{thm}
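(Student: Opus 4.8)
The plan is to compute the norm directly in polar coordinates and then reduce the resulting radial integral to the Gamma function via a single change of variable. Recall that $H_\alpha = B_w^2$ with $w(z) = e^{-|z|^\alpha} = e^{-\phi(|z|)}$, so here $\phi(t) = t^\alpha$, and the inner product on $H_\alpha$ is $(f,g) = \int_{\mathbb{R}^2} \overline{f}\,g\, e^{-2\phi(|z|)}\, d\lambda$. Hence, writing $z = re^{i\theta}$ and using that the weight is radial,
\[
\lVert z^n \rVert_{H_\alpha}^2 = \int_0^{2\pi}\!\!\int_0^\infty r^{2n}\, e^{-2r^\alpha}\, r\, dr\, d\theta = 2\pi \int_0^\infty r^{2n+1}\, e^{-2r^\alpha}\, dr.
\]
The inner integral converges (near $r=0$ the integrand is a polynomial, and as $r\to\infty$ the factor $e^{-2r^\alpha}$ with $\alpha>0$ dominates every power of $r$), which is exactly the convergence already noted for the radial integrals attached to this family of weight functions; this also justifies passing to polar coordinates by Tonelli, the integrand being nonnegative.

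Next I would substitute $u = 2r^\alpha$, i.e. $r = (u/2)^{1/\alpha}$, so that $dr = \tfrac{1}{2\alpha}(u/2)^{1/\alpha - 1}\, du$ and $r^{2n+1} = (u/2)^{(2n+1)/\alpha}$. Collecting powers of $(u/2)$ gives the exponent $\tfrac{2n+1}{\alpha} + \tfrac{1}{\alpha} - 1 = \tfrac{2(n+1)}{\alpha} - 1$, so
\[
\lVert z^n \rVert_{H_\alpha}^2 = \frac{2\pi}{2\alpha} \int_0^\infty \left(\frac{u}{2}\right)^{\frac{2(n+1)}{\alpha} - 1} e^{-u}\, du = \frac{\pi}{\alpha}\, 2^{\,1 - \frac{2}{\alpha}(n+1)} \int_0^\infty u^{\frac{2(n+1)}{\alpha} - 1} e^{-u}\, du.
\]
Recognizing the last integral as $\Gamma\!\left[\tfrac{2}{\alpha}(n+1)\right]$ and rewriting $\tfrac{\pi}{\alpha}\,2^{1 - \frac{2}{\alpha}(n+1)} = \tfrac{2\pi}{\alpha}\,2^{-\frac{2}{\alpha}(n+1)}$ then yields the asserted formula.

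There is no genuine obstacle here: the argument is a two-line computation, and the only points deserving a word of care are the justification of convergence of the radial integral (immediate, and already granted by the hypotheses on the weight) and the bookkeeping of the exponents under $u = 2r^\alpha$. As a sanity check one may specialize to $\alpha = 1$, where the integral is literally $2\pi\int_0^\infty r^{2n+1}e^{-2r}\,dr = 2\pi\, 2^{-2(n+1)}\,\Gamma(2n+2)$, in agreement with the general formula; the substitution $u = 2r^\alpha$ simply handles all $\alpha \in (0,1]$ at once.
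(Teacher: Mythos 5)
Your proof is correct and follows essentially the same route as the paper's: pass to polar coordinates (justified by Tonelli since the integrand is nonnegative), substitute $u = 2r^\alpha$, and recognize the resulting integral as a Gamma function. The only differences are cosmetic — you are a bit more explicit about convergence and add a sanity check at $\alpha = 1$, and you avoid a small typo in the paper's displayed intermediate step (the paper writes $|z|^n$ where $|z|^{2n}$ is meant, though the subsequent computation is carried out correctly).
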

\begin{proof}
Computing the norm in $H_\alpha$ gives
\begin{equation*}
\lVert z^n \rVert_{H_\alpha}^2 = \int_{\mathbb{C}} {|z|^n e^{-2r^{\alpha}}dz} = \int_0^{2\pi} {1d\theta} \int_0^\infty {r^{2n+1} e^{-2r^\alpha} dr}.
\end{equation*}
For the integral over $r$, we make the change of variable $x = 2r^\alpha$, which changes the integral into
\begin{equation*}
\lVert z^n \rVert_{H_\alpha}^2 = 2\pi \int_0^{\infty} {\left (\left (\frac{x}{2}\right )^{\frac{1}{\alpha}} \right )^{2n+1} e^{-x} \frac{1}{2\alpha(\frac{x}{2})^{\frac{\alpha-1}{\alpha}}} dx} = \frac{2\pi}{\alpha} 2^{-\frac{2}{\alpha}(n+1)} \int_0^\infty { x^{\frac{2(n+1)}{\alpha}-1}e^{-x}dx}.
\end{equation*}
However, the final integral is simply $\Gamma(\frac{2}{\alpha}(n+1))$. \qedsymbol
\end{proof}

Thus we can simply take the normalizing constants $c_n$ to be the square root of the reciprocal of the formula for $\lVert z^n \rVert^2$ given above. Namely, 
\begin{equation*}
c_n = \left ( \frac{2\pi}{\alpha} \right )^{-\frac{1}{2}} 2^{\frac{n+1}{\alpha}} \left (\Gamma \left (\frac{2}{\alpha} (n+1) \right ) \right )^{-\frac{1}{2}}
\end{equation*}
for every $n \in \mathbb{N}_0$. Now examining the action of $D$ on a typical basis element $u_n(z)$, we see that: $Du_n = D(c_n z^n) = n c_n z^{n-1} = \frac{n c_n}{c_{n-1}} u_{n-1}$. We thus obtain the following representation of $D$:

\begin{thm}
The operator $D$ is isomorphic to a weighted backward shift on $H_\alpha$ taking a sequence of coefficients $(a_n)$ in $l^2(\mathbb{C})$, where $f(z) = \sum_{n=0}^\infty a_n u_n(z)$, to $(\gamma_n a_{n+1})$ in $l^2(\mathbb{C})$, where for $n \in \mathbb{N}_0$, $\gamma_n > 0$ and $\gamma_n$ is given by $\gamma_n^2 = 2^{\frac{2}{\alpha}} \frac{(n+1)^2\Gamma(\frac{2}{\alpha}(n+1))}{\Gamma(\frac{2}{\alpha}(n+2))}$.
\end{thm}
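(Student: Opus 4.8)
The plan is to diagonalize the situation through the orthonormal basis $\{u_n\}_{n\in\mathbb{N}_0}$ of $H_\alpha$ established above. Let $U\colon H_\alpha\to\ell^2(\mathbb{C})$ be the unitary map sending $f=\sum_{n=0}^\infty a_nu_n$ to its coefficient sequence $(a_n)_{n\ge 0}$; this is well defined and unitary precisely because $\{u_n\}$ is an orthonormal basis (Parseval's identity). Since $D$ is a bounded operator on $H_\alpha$ by Theorem~\ref{theorem1}, the conjugate $\widetilde D:=UDU^{-1}$ is a bounded operator on $\ell^2(\mathbb{C})$, and it suffices to identify $\widetilde D$ with the asserted weighted backward shift.

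First I would compute the action of $D$ on the basis vectors. Because $u_n(z)=c_nz^n$, we have $Du_0=0$ and, for $n\ge 1$,
\[
Du_n=nc_nz^{n-1}=\frac{nc_n}{c_{n-1}}\,u_{n-1}.
\]
Hence, for $f=\sum_{n\ge 0}a_nu_n$, linearity and continuity of $D$ give
\[
Df=\sum_{n=1}^\infty a_n\frac{nc_n}{c_{n-1}}\,u_{n-1}=\sum_{m=0}^\infty \frac{(m+1)c_{m+1}}{c_m}\,a_{m+1}\,u_m,
\]
so that $\widetilde D$ is exactly the map $(a_n)_{n\ge 0}\mapsto(\gamma_n a_{n+1})_{n\ge 0}$ with $\gamma_n:=(n+1)c_{n+1}/c_n$, which is positive since each $c_n>0$. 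It then remains to evaluate $\gamma_n$: substituting the explicit value $c_n=(2\pi/\alpha)^{-1/2}\,2^{(n+1)/\alpha}\,\Gamma\!\big(\tfrac{2}{\alpha}(n+1)\big)^{-1/2}$ obtained above, the prefactor $(2\pi/\alpha)^{-1/2}$ cancels in the ratio $c_{n+1}/c_n$, the powers of $2$ combine to $2^{1/\alpha}$, and the Gamma factors give $\big(\Gamma(\tfrac{2}{\alpha}(n+1))/\Gamma(\tfrac{2}{\alpha}(n+2))\big)^{1/2}$; squaring and multiplying by $(n+1)^2$ yields
\[
\gamma_n^2=2^{\frac{2}{\alpha}}\,\frac{(n+1)^2\,\Gamma\!\big(\tfrac{2}{\alpha}(n+1)\big)}{\Gamma\!\big(\tfrac{2}{\alpha}(n+2)\big)},
\]
which is precisely the claimed expression.

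I do not expect any serious obstacle here; the computation is entirely elementary once the orthonormal basis and the constants $c_n$ are in hand, the only mild care being with the index shift $m=n-1$. The one point worth a remark is that $\widetilde D$ is a \emph{bounded} weighted shift, i.e., $(\gamma_n)$ is a bounded sequence: this is immediate since $\gamma_n=\lVert Du_{n+1}\rVert_{H_\alpha}\le\lVert D\rVert$. Moreover, applying Stirling's formula to the quotient $\Gamma(\tfrac{2}{\alpha}(n+1))/\Gamma(\tfrac{2}{\alpha}(n+2))$ shows that $\gamma_n\to 1$ when $\alpha=1$ and $\gamma_n\to 0$ when $0<\alpha<1$, in agreement with the value of $a$ recorded after Theorem~\ref{theorem2} and with the spectral description $\sigma(D)=\Delta_a$; this consistency check is a useful sanity test but is not needed for the statement itself.
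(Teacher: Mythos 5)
Your proposal is correct and follows essentially the same route as the paper: compute $Du_n = \tfrac{nc_n}{c_{n-1}}u_{n-1}$, reindex to read off the shift with weight $\gamma_n=(n+1)c_{n+1}/c_n$, and then substitute the explicit formula for $c_n$ to obtain $\gamma_n^2$. The only additions you make (the explicit unitary $U$, the boundedness remark, and the Stirling consistency check) are harmless elaborations on the same argument.
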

\begin{proof}
Using the last calculation and writing $f(z) = \sum_{n=0}^\infty a_n u_n$, we obtain:
\begin{equation*}
D f(z) = \sum_{n=0}^\infty a_n \frac{nc_n}{c_{n-1}} u_{n-1} = \sum_{n=0}^\infty  \frac{(n+1)c_{n+1}}{c_{n}} a_{n+1} u_n = \sum_{n=0}^\infty  \gamma_n a_{n+1} u_n,
\end{equation*}
where $\gamma_n = \frac{(n+1)c_{n+1}}{c_{n}}$. It follows, using the previously calculated formula for $c_n$, that 
\begin{equation*}
\gamma_n^2 = \frac{(n+1)^2 c_{n+1}^2}{c_n^2} = \frac{(n+1)^2 \lVert z^n \rVert^2}{\lVert z^{n+1} \rVert^2} = 2^{\frac{2}{\alpha}} \frac{(n+1)^2\Gamma(\frac{2}{\alpha}(n+1))}{\Gamma(\frac{2}{\alpha}(n+2))},
\end{equation*}
as desired. \qedsymbol
\end{proof}

The last fact we will need from \cite{AtzBri} is to apply the standard asymptotic for the Gamma function to obtain that $\gamma_n \sim c \cdot n^{1-\frac{1}{\alpha}}$ as $n \to \infty$, where $c$ is a positive constant. Thus if $0 < \alpha < 1$, then $\gamma_n \to 0$ as $n \to \infty$. 

Continuing beyond the results from \cite{AtzBri}, we start by calculating the adjoint $D^*$. 

\begin{thm}
Given $f \in H_\alpha$, let $ f = \sum_{n=0}^\infty a_n u_n$ be its expansion in terms of the orthonormal basis. The adjoint of $D^*$ is isomorphic to a weighted forward shift given by the equation $D^* (a_n) = \left ( \gamma_{n-1} a_{n-1} \right )$.
\end{thm}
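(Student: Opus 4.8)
The plan is to read off the matrix of $D$ in the orthonormal basis $\{u_n\}_{n\in\mathbb{N}_0}$ from the previous theorem and then conjugate-transpose it. Specializing the identity $Df=\sum_{n=0}^{\infty}\gamma_n a_{n+1}u_n$ to $f=u_m$ (i.e. $a_k=\delta_{km}$) gives $Du_0=0$ and $Du_m=\gamma_{m-1}u_{m-1}$ for $m\ge 1$, so $D$ is precisely the weighted backward shift with weights $\gamma_n$. Since $D$ is a bounded operator on all of $H_\alpha$ by Theorem \ref{theorem1}, its Hilbert-space adjoint $D^*$ is a well-defined bounded operator on $H_\alpha$; hence it suffices to determine $D^*$ on the basis vectors and then extend by linearity and continuity. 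There are no domain subtleties to worry about.

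First I would compute the matrix entries $(D^*u_n,u_m)$. Using the defining property of the adjoint, the relation $Du_m=\gamma_{m-1}u_{m-1}$, and the fact that the $\gamma_k$ are real (indeed positive), one obtains
\[
(D^*u_n,u_m)=(u_n,Du_m)=\overline{(Du_m,u_n)}=\overline{\gamma_{m-1}\,\delta_{m-1,n}}=\gamma_n\,\delta_{m,n+1},
\]
with the convention that this vanishes when $m=0$. Expanding $D^*u_n$ in the orthonormal basis then yields $D^*u_n=\gamma_n u_{n+1}$ for every $n\in\mathbb{N}_0$; that is, $D^*$ is the weighted forward (unilateral) shift with the same weight sequence $(\gamma_n)$.

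Finally, for a general $f=\sum_{n=0}^{\infty}a_n u_n\in H_\alpha$, the linearity and continuity of $D^*$ give
\[
D^*f=\sum_{n=0}^{\infty}a_n\gamma_n u_{n+1}=\sum_{n=1}^{\infty}\gamma_{n-1}a_{n-1}u_n,
\]
so in the sequence model the coefficient sequence $(a_n)$ of $f$ is mapped to $(\gamma_{n-1}a_{n-1})_{n\ge 0}$, with the convention $a_{-1}:=0$ (equivalently, the zeroth coordinate of $D^*f$ is $0$). This is exactly the claimed formula, and it exhibits $D^*$ as isomorphic to a weighted forward shift on $H_\alpha$.

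The computation is routine; the only points requiring care are the bookkeeping at the boundary index $n=0$ — where the backward shift $D$ annihilates $u_0$ and the forward shift $D^*$ has no incoming term — and the verification that the formal adjoint just computed is genuinely the Hilbert-space adjoint, which is immediate here because $D$ is everywhere-defined and bounded, so checking $(Df,g)=(f,D^*g)$ on the orthonormal basis and extending bilinearly and continuously is enough. As a consistency check, note that $\sum_n\gamma_n^2|a_n|^2<\infty$ since the weights $\gamma_n$ are bounded (they tend to $0$ when $0<\alpha<1$ and to a positive constant when $\alpha=1$, by the asymptotic $\gamma_n\sim c\,n^{1-1/\alpha}$ recalled above), so $D^*f\in H_\alpha$ and $\lVert D^*\rVert=\lVert D\rVert=\sup_n\gamma_n$, as it must be.
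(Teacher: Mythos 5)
Your proof is correct and follows essentially the same route as the paper: both compute $D^*$ via the adjoint identity $(D^*f,u_n)=(f,Du_n)$ together with $Du_n=\gamma_{n-1}u_{n-1}$ (and $Du_0=0$), the only cosmetic difference being that you first read off the matrix entries on basis vectors and then extend by linearity, while the paper directly extracts the Fourier coefficients $b_n=(D^*f,u_n)$ of $D^*f$ for a general $f$. Your added remarks on boundedness of $D$ and the norm consistency check are harmless embellishments rather than a different method.
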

\begin{proof}
To calculate $D^*$, write $ D^*f = \sum_{n=0}^\infty b_n u_n$. Since $\{u_n\}$ is an orthonormal basis we find the $n^{th}$, for $n \geq 1$, coefficient of $D^*f$: $(D^*f, u_n) = (f, Du_n) = (f, \gamma_{n-1} u_{n-1}) = \gamma_{n-1} a_{n-1}$. Thus we have $b_n = \gamma_{n-1} a_{n-1}$ for each $n\geq 1$. For $b_0$, we calculate $(D^*f, u_0) = (f, Du_0) = (f, 0) = 0$. Thus, $D^*$ acts on the sequence of coefficients $(a_n)$ as a weighted forward shift $(a_n) \mapsto (\gamma_{n-1} a_{n-1})$, with the new $0^{th}$ coefficient being 0. \qedsymbol
\end{proof}

Now that we have the adjoint, we can immediately see that $D$ is not self-adjoint as $D$ is a backward shift and $D^*$ is a forward shift. Moreover, the following calculation with $f(z) \equiv 1$ shows that it is not even normal: Indeed, $D^*Df = D^* 0 = 0$, but, on the other hand, $DD^*f = D (\gamma_0 z) = \gamma_0$. This shows that we cannot apply the functional calculus for unbounded normal operators that was used in \cite{HerLap2}--\cite{HerLap1}. Instead we use the Riesz functional calculus, which is valid for bounded operators like $D$. 

Next, we will use the asymptotic $\gamma_n \sim c \cdot n^{1-\frac{1}{\alpha}}$ to determine which trace ideals $D$ will belong to, depending on $\alpha$.

\begin{thm}
The operator $D$ is compact on $H_\alpha$ for any $0 < \alpha < 1$, trace class for any $0 < \alpha < \frac{1}{2}$, Hilbert--Schmidt for any $0 < \alpha < \frac{2}{3}$, and, in general, $D \in J_p$ if $\alpha < \frac{p}{p+1}$ for any $p \in \mathbb{N}$, where $J_p$ is the trace ideal defined in the previous section.
\end{thm}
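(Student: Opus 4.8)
The plan is to exploit the concrete realization of $D$ as a weighted backward shift on $l^2(\mathbb{C})$ and reduce the entire statement to an elementary convergence question for the sequence $(\gamma_n)$. First I would compute the singular values of $D$ explicitly. Combining the weighted backward shift representation $D(a_n) = (\gamma_n a_{n+1})$ with the weighted forward shift representation $D^*(a_n) = (\gamma_{n-1}a_{n-1})$ established just above, a direct composition shows that $D^*D$ is the diagonal operator on $l^2(\mathbb{C})$ with diagonal entries $(0, \gamma_0^2, \gamma_1^2, \gamma_2^2, \dots)$; hence $|D| = \sqrt{D^*D}$ is diagonal with entries $(0, \gamma_0, \gamma_1, \gamma_2, \dots)$. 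Therefore the singular values $\mu_n(D)$ are precisely the nonzero terms of $(\gamma_n)_{n \ge 0}$ arranged in nonincreasing order; and since $\gamma_n \sim c\cdot n^{1-1/\alpha}$ with $1 - 1/\alpha < 0$ for $0 < \alpha < 1$, the $\gamma_n$ are eventually decreasing, so $\mu_n(D) = \gamma_{n-1}$ for all sufficiently large $n$.

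With the singular values in hand, compactness is immediate: for $0 < \alpha < 1$ we have $\mu_n(D) \to 0$, so $|D|$, and hence $D$, is compact, being the operator-norm limit of the finite-rank truncations of the diagonal operator. For the trace-ideal membership, Definition \ref{Jpdef} says $D \in J_p$ if and only if $\sum_n \mu_n(D)^p < \infty$; since absolute convergence is rearrangement-invariant and depends only on the tail of the sequence, this holds if and only if $\sum_n \gamma_n^p < \infty$, which by the asymptotic $\gamma_n \sim c\cdot n^{1-1/\alpha}$ is equivalent to the convergence of $\sum_n n^{p(1-1/\alpha)}$. By the $p$-series test this converges exactly when $p(1/\alpha - 1) > 1$, i.e. when $\alpha < \frac{p}{p+1}$. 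Specializing to $p = 1$ yields trace class for $0 < \alpha < \frac{1}{2}$, and to $p = 2$ yields Hilbert--Schmidt for $0 < \alpha < \frac{2}{3}$.

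The only real content is the identification of $|D|$ with the diagonal operator $\operatorname{diag}(0, \gamma_0, \gamma_1, \dots)$; once the singular values are pinned down, everything reduces to the $p$-series test, so I expect no serious obstacle. The one point requiring a little care is that the decreasing rearrangement $\mu_n(D)$ need not coincide with $\gamma_{n-1}$ for small $n$ (the $\gamma_n$ are only \emph{eventually} monotone), but since both membership in $J_p$ and the finiteness of $\sum_n \mu_n(D)^p$ depend only on the tail of the sequence, the asymptotic $\gamma_n \sim c\cdot n^{1-1/\alpha}$ is all that is needed.
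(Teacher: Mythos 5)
Your proof is correct and takes essentially the same approach as the paper: both reduce the question to the asymptotics of the weights $\gamma_n \sim c\,n^{1-1/\alpha}$ appearing in the backward-shift realization of $D$, and apply a $p$-series/comparison test. The only cosmetic difference is that the paper reads off the singular values directly from the expansion $D = \sum_{n\ge 1} \gamma_{n-1}(u_n,\cdot)u_{n-1}$ and proves compactness by truncating this sum, whereas you obtain $|D|$ explicitly by computing $D^*D$ from the shift formulas; the two computations are interchangeable.
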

\begin{proof}
Let $0 < \alpha < 1$, and let $E_N : H_\alpha \to H_\alpha$ that takes a power series $ \sum_{n=1}^\infty a_n u_n \mapsto \sum_{n=1}^N a_n \lambda_{n-1} u_{n-1}$, which is the composition $DP_N$ of the derivative operator $D$ with the projection onto the subspace of polynomials at most degree $N$, $P_N$. Each $E_N$ is of finite rank, in fact, the range of $E_N$ has dimension $N$. We claim that the norm limit of $E_N$ is $D = \frac{d}{dz}$. Note that $\lVert D - E_N \rVert = \sup_{n > N} \{ \lambda_{n-1}\}$. But $\displaystyle \lambda_{n}  \sim c\cdot n^{1-\frac{1}{\alpha}} \to 0$, as $n \to \infty$, for any $0 < \alpha < 1$. Thus $E_N$ converges to $D$ in norm and therefore $D$ is compact. Furthermore, we can write $ D = \sum_{n=1}^\infty \lambda_{n-1} (u_n, \cdot) u_{n-1}$ so $\lambda_{n-1}^*$ are the singular values of $D$. To determine when $\lambda_{n-1}^*$ are in $l^p$, we use the Limit Comparison Test to compare $ \sum_{n=1}^\infty (\lambda_{n-1}^*)^p$ with $ \sum_{n=1}^\infty {(n^{1-\frac{1}{\alpha}})^p}$, which converges if and only if $p\left (1 - \frac{1}{\alpha} \right) < -1$. Solving this gives $\alpha < \frac{p}{p+1}$. Therefore $D \in J_p$ if $\alpha < \frac{p}{p+1}$ and, in particular, is trace class if $p < \frac{1}{2}$ and Hilbert--Schmidt if $p < \frac{2}{3}$. (Here, we have used the notation of Definition \ref{Jpdef}.) \qedsymbol
\end{proof}

\emph{From now on, we will fix an }$\alpha$\emph{ with }$0 < \alpha < \frac{1}{2}$\emph{ and simply refer to }$H_\alpha$\emph{ as }$H$. In this case, we have the following spectrum for $D$.

\begin{thm}\label{Dtheorem}
We have $\sigma(D) = \sigma_p(D) = \{0\}$ and 0 is a simple eigenvalue of $D$ with eigenfunction $f(z) \equiv 1$, the constant function equal to 1. 
\end{thm}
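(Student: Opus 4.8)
The plan is to obtain the spectrum for free from Theorem~\ref{theorem2} and then pin down the point spectrum and the multiplicity of the eigenvalue $0$ by an elementary direct argument. Since the weight chosen here is $w(z)=e^{-|z|^\alpha}$, we have $\phi(t)=t^\alpha$, and because $0<\alpha<\tfrac12<1$ the constant $a$ of (\ref{defa}) is $a=\lim_{t\to\infty}t^{\alpha-1}=0$. One first checks (routinely) that $\phi(t)=t^\alpha$ does satisfy the standing hypotheses on the weight function: nonnegativity, concavity and monotonicity, subadditivity $(s+t)^\alpha\le s^\alpha+t^\alpha$, and $\phi(t)/\log t\to+\infty$. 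Theorem~\ref{theorem2} then applies and yields at once $\sigma(D)=\Delta_0=\{0\}$.

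It remains to identify $\sigma_p(D)$ and $\ker D$. For the point spectrum, $0$ is genuinely an eigenvalue: the constant function $\1(z)\equiv 1$ is entire and belongs to $H=H_\alpha$, since $\lVert\1\rVert^2=\lVert z^0\rVert^2=\tfrac{2\pi}{\alpha}2^{-2/\alpha}\Gamma(2/\alpha)<\infty$ (equivalently, the weight $e^{-2|z|^\alpha}$ is integrable over $\mathbb{C}$), and $D\1=0$ with $\1\neq 0$. Since $\sigma_p(D)\subseteq\sigma(D)=\{0\}$ always, this forces $\sigma_p(D)=\{0\}$.

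For the simplicity of the eigenvalue $0$, I would show that $\ker D$ is precisely the line of constant functions. If $f\in H$ satisfies $Df=f'\equiv 0$ on $\mathbb{C}$, then $f$ is constant, being an entire function with vanishing derivative; conversely every constant function lies in $H$ by the norm computation above. Hence $\ker D=\mathbb{C}\cdot\1$ is one-dimensional, i.e., $0$ is a simple eigenvalue with eigenfunction $\1$. The same conclusion is visible from the weighted backward shift model of $D$, namely $(a_n)\mapsto(\gamma_n a_{n+1})$ with all $\gamma_n>0$: the equation $Df=0$ forces $\gamma_n a_{n+1}=0$ for every $n\in\mathbb{N}_0$, hence $a_n=0$ for all $n\ge 1$, so only the $u_0$-coefficient is free.

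The only subtle point here is interpretive, not technical: what ``simple'' should mean at the spectral value $0$ of a compact operator. It cannot mean algebraic multiplicity one in the Riesz sense, because $\bigcup_{k\ge 1}\ker D^k$ is the space of all polynomials and is therefore infinite-dimensional (from the shift model, $D^k$ annihilates exactly $u_0,\dots,u_{k-1}$), and $0$ is not isolated in the way that would produce a finite-rank Riesz projection. So the statement is to be read, and I would state it, as: the geometric multiplicity $\dim\ker D$ equals $1$ --- which is exactly what the argument above establishes. None of the steps is hard; the content is the observation that $a=0$, the integrability of the weight, and the fact that entire functions with zero derivative are constant.
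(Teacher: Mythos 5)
Your proposal is correct and follows the same route as the paper for the spectrum: compute $a=0$ from (\ref{defa}) for $\phi(t)=t^\alpha$ with $0<\alpha<\tfrac12$, invoke Theorem~\ref{theorem2} to get $\sigma(D)=\Delta_0=\{0\}$, and observe that the constant function $\1\in H$ witnesses $0\in\sigma_p(D)$, whence $\sigma_p(D)=\{0\}$. Where you go beyond the paper's own proof is in actually justifying the word \emph{simple}: the paper exhibits $\1$ as an eigenfunction and stops, while you show $\ker D=\mathbb{C}\cdot\1$ is one-dimensional, both directly (an entire function with identically vanishing derivative is constant, and constants lie in $H$) and via the weighted backward shift model, where $\gamma_n>0$ for all $n$ forces $a_{n+1}=0$. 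Your closing remark is also on target: since $\bigcup_{k\ge1}\ker D^k$ is the space of all polynomials, hence infinite-dimensional, and $D$ is quasinilpotent so $0$ is not isolated in the way that produces a finite-rank Riesz projection, ``simple'' can only mean geometric multiplicity $\dim\ker D=1$, which is precisely what your argument establishes and what the subsequent construction (Theorem~\ref{spectrumthm} and the restriction to the total eigenspace $E_Z$) actually uses.
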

\begin{proof}
We know from Theorem \ref{theorem2} that $\sigma(D) = \Delta_a$, where $a = \displaystyle \lim_{t \to \infty} \frac{\phi(t)}{t}$, as in (\ref{defa}). Here we have $\phi(t) = t^\alpha$ for $0 < \alpha < \frac{1}{2}$. Thus we have that $\displaystyle \lim_{t \to \infty} \frac{t^\alpha}{t} = \lim_{t \to \infty} t^{\alpha - 1} = 0$. It follows that $a = 0$ and hence, by Theorem \ref{theorem2}, $\sigma(D) = \Delta_0 = \{0\}$. However, we also know that $f(z) \equiv 1 \in H$, so that $D$ has the eigenvector $f$ corresponding to the eigenvalue $0$ and the point spectrum of $D$ is also $\sigma_p(D) = \{0\}$. \qedsymbol
\end{proof}

Finally, we turn to considering the set of operators $\{e^{-s D}\}_{s \in \mathbb{C}}$. We compare this to the result for $\partial_c$ obtained in \cite{HerLap1} and mentioned in Section 1. This theorem will show that $D$ is the \emph{infinitesimal shift} (of the complex plane). 

\begin{thm}
The family $\{e^{-sD}\}_{s\in \mathbb{C}}$ gives the group of translation operators on $H$. 
\end{thm}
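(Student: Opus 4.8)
The plan is to prove the theorem by identifying, for every $s \in \mathbb{C}$, the bounded operator $e^{-sD}$ with the \emph{translation operator} $T_s$ defined by $(T_s f)(z) = f(z-s)$, and then to observe that $\{T_s\}_{s\in\mathbb{C}}$ is manifestly a group under composition, isomorphic to the additive group $(\mathbb{C},+)$. Since $D$ is a bounded operator on $H$ by Theorem \ref{theorem1}, the exponential $e^{-sD} := \sum_{n=0}^\infty \frac{(-s)^n}{n!}D^n$ converges absolutely in operator norm for each $s$, so $e^{-sD}\in B(H)$; moreover $e^{-sD}e^{-tD}=e^{-(s+t)D}$ and $e^{0}=I$ because $sD$ and $tD$ commute. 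Thus the group structure on the left-hand side is automatic, and the real content of the theorem is the identification $e^{-sD}=T_s$.

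The argument then splits into two halves. First, one checks that $T_s$ is a well-defined bounded operator on $H$. Writing $\phi(t)=t^\alpha$, the subadditivity and monotonicity of $\phi$ give the ``triangle-type'' inequality $|\phi(|z|)-\phi(|z-s|)|\le \phi(|s|)$ already exploited in the proof of Theorem \ref{theorem1}, whence $e^{-2\phi(|z-s|)}\le e^{2\phi(|s|)}e^{-2\phi(|z|)}$. Changing variables $w=z-s$ in the defining integral yields $\lVert T_s f\rVert_{H}^2=\int_{\mathbb{C}}|f(w)|^2 e^{-2\phi(|w+s|)}\,d\lambda(w)\le e^{2\phi(|s|)}\lVert f\rVert_H^2$, so $\lVert T_s\rVert\le e^{\phi(|s|)}$; and since a translate of an entire function is entire, $T_s$ maps $H$ into $H$. (This is precisely the step that uses the slow decay of the weight, i.e. $0<\alpha\le 1$; for a weight decaying faster than any exponential, translations would not preserve the space.) Second, one computes both operators on monomials: from $D^k z^n=\frac{n!}{(n-k)!}z^{n-k}$ we get
\begin{equation*}
e^{-sD}z^n=\sum_{k=0}^n\frac{(-s)^k}{k!}\cdot\frac{n!}{(n-k)!}z^{n-k}=\sum_{k=0}^n\binom{n}{k}(-s)^k z^{n-k}=(z-s)^n=T_s z^n .
\end{equation*}
Since the $u_n(z)=c_n z^n$ form an orthonormal basis of $H$, their linear span (the polynomials) is dense, and $e^{-sD}$ and $T_s$ are bounded operators agreeing on this dense subspace; hence $e^{-sD}=T_s$ on all of $H$, for every $s\in\mathbb{C}$. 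Finally $T_sT_t=T_{s+t}$, $T_0=I$, and $T_s$ has inverse $T_{-s}$, so $\{e^{-sD}\}_{s\in\mathbb{C}}=\{T_s\}_{s\in\mathbb{C}}$ is exactly the group of translation operators of $\mathbb{C}$ acting on $H$.

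The only genuinely delicate point is the first half --- verifying that $T_s$ really maps $H$ into $H$ boundedly --- which rests on the growth hypotheses imposed on the weight; everything else is a formal manipulation of convergent series. An alternative to the monomial computation would be to note that point evaluations are continuous on $H$ (it is a reproducing kernel Hilbert space: for entire $f$, the sub-mean-value inequality bounds $|f(z_0)|$ by a multiple of $\lVert f\rVert_H$ on each fixed disc), so that $H$-convergence of $\sum_{n}\frac{(-s)^n}{n!}D^n f$ forces locally uniform convergence; evaluating at $z$ and invoking Taylor's theorem for the entire function $f$ then gives $(e^{-sD}f)(z)=\sum_n\frac{(-s)^n}{n!}f^{(n)}(z)=f(z-s)$ directly. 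Either route exhibits $D=\frac{d}{dz}$ as the infinitesimal generator of the translation group, justifying the name ``infinitesimal shift of the complex plane.''
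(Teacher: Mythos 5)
Your proof is correct, and it is actually more careful than the paper's. The paper's own argument is short: it writes the Taylor expansion $f(z-s)=\sum_{n\ge 0}\frac{(-s)^n}{n!}f^{(n)}(z)$ for $f\in H$ entire, identifies this term-by-term with $\bigl(\sum_{n\ge0}\frac{(-s)^n}{n!}D^n f\bigr)(z)$, and concludes $e^{-sD}f=T_sf$. Implicit there is that the operator-norm limit of the partial sums of $e^{-sD}f$ can be evaluated pointwise and agrees with the pointwise Taylor series --- a step the paper does not justify. Your main route is genuinely different: you verify directly (via the triangle inequality for $\phi$) that $T_s$ is a bounded operator on $H$ with $\lVert T_s\rVert\le e^{\phi(|s|)}$, compute $e^{-sD}z^n=(z-s)^n=T_sz^n$ on the monomials, and then invoke density of the polynomials together with boundedness of both operators. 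This supplies exactly what the paper leaves unsaid, and the separate boundedness check for $T_s$ is a real addition since the paper never establishes it independently. The ``alternative'' route you sketch at the end --- point evaluations continuous on $H$, so norm convergence implies locally uniform convergence, hence the Taylor identification --- is essentially the paper's argument with the missing reproducing-kernel justification filled in; either version of your write-up is a legitimate tightening. One small bookkeeping note: the boundedness of $T_s$ is a consequence of the identity $e^{-sD}=T_s$ together with Theorem~\ref{theorem1}, so it is not logically indispensable if one takes the paper's pointwise route, but it is needed for your density argument and is in any case worth recording.
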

\begin{proof}
First note that since any $f \in H$ is an entire function, we have the convergent power series representation: $f(z - s) = \sum_{n=0}^\infty {\frac{f^{(n)}(z)}{n!}(-s)^n}$ for any $z, s \in \mathbb{C}$. Thus 
\begin{equation*}
e^{-sD} f(z) = \sum_{n=0}^\infty \frac{1}{n!} (-sD)^n f(z) = \sum_{n=0}^\infty \frac{1}{n!} (-s)^n \frac{d^n}{dz^n}f(z) = f(z-s).
\end{equation*}
This shows that $e^{-sD}$ just acts as translation by $s$ on the space $H$. From this expression we also see that $\displaystyle \lim_{s \to 0} \lVert e^{-sD} f - f \rVert = 0$. \qedsymbol
\end{proof}

\section{The Construction}

With the results about $D$ in hand, we turn to constructing an operator that might play the role of Frobenius when dealing with the Riemann zeta function or other entire or meromorphic functions of interest in number theory, analysis or mathematical physics. 

We begin by considering a particular choice of the family of analytic functions $\phi_\tau(z) = z+\tau$. This gives us operators 
\begin{equation}
D_\tau := \phi_\tau(D) = D + \tau I,
\end{equation}
for which the following lemma holds. (Recall that $\alpha$ has been fixed once for all to satisfy $0 < \alpha < \frac{1}{2}$ and hence, that Theorem \ref{Dtheorem} applies.)

\begin{lemma}
For any $\tau \in \mathbb{C}$, $D_\tau \in B(H)$ with spectrum $\sigma(D_\tau) = \{\tau\}$. If $\tau \neq 0$, then $D_\tau$ is invertible and $D_\tau^{-1}\in B(H)$. 
\end{lemma}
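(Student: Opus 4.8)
The plan is to reduce everything to the spectral information about $D$ already established in Theorem~\ref{Dtheorem}. First I would note that $D \in B(H)$ (by Theorem~\ref{theorem1}, or by the weighted backward-shift representation together with $\gamma_n \to 0$), so that $D_\tau = D + \tau I$ is manifestly a bounded linear operator on $H$ for every $\tau \in \mathbb{C}$, being the sum of two bounded operators.

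For the spectrum, I would invoke the spectral mapping theorem for the Riesz functional calculus applied to the polynomial $p(z) = z + \tau$: since $\phi_\tau(D) = p(D) = D_\tau$, we get $\sigma(D_\tau) = p(\sigma(D)) = \sigma(D) + \tau$. By Theorem~\ref{Dtheorem}, $\sigma(D) = \{0\}$, whence $\sigma(D_\tau) = \{\tau\}$. Alternatively, and more elementarily, one can argue directly: for $\lambda \in \mathbb{C}$ we have $\lambda I - D_\tau = (\lambda - \tau) I - D$, which is invertible in $B(H)$ precisely when $\lambda - \tau \notin \sigma(D) = \{0\}$, i.e. precisely when $\lambda \neq \tau$; this identifies $\sigma(D_\tau) = \{\tau\}$ without appealing to the spectral mapping theorem.

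Finally, suppose $\tau \neq 0$. Then $0 \notin \{\tau\} = \sigma(D_\tau)$, so by the very definition of the spectrum $D_\tau$ is invertible with $D_\tau^{-1} \in B(H)$. If one prefers an explicit formula, the fact that $D$ is quasinilpotent (its spectral radius is $r(D) = 0$ by Theorem~\ref{Dtheorem}) guarantees that the Neumann-type series
\begin{equation*}
D_\tau^{-1} = (\tau I + D)^{-1} = \sum_{n=0}^\infty (-1)^n \tau^{-n-1} D^n
\end{equation*}
converges in the operator norm of $B(H)$ for every $\tau \neq 0$, and a termwise computation shows it is a two-sided inverse of $D_\tau$.

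I expect no serious obstacle here: the entire content of the lemma is packaged in Theorem~\ref{Dtheorem} (the identity $\sigma(D) = \{0\}$), and the only mildly delicate point worth stating carefully is why the Neumann series converges for \emph{all} $\tau \neq 0$ rather than merely for large $|\tau|$ — this is exactly where quasinilpotence of $D$, as opposed to the crude estimate $\lVert D^n \rVert^{1/n} \leq \lVert D \rVert$, is what is used.
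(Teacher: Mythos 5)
Your proof is correct and follows essentially the same route as the paper's: apply the spectral mapping theorem (via the Riesz functional calculus) to $\phi_\tau(z)=z+\tau$, use $\sigma(D)=\{0\}$ from Theorem~\ref{Dtheorem} to conclude $\sigma(D_\tau)=\{\tau\}$, and then read off invertibility from $0\notin\sigma(D_\tau)$ when $\tau\neq 0$. The extras you offer — the elementary rewriting $\lambda I - D_\tau = (\lambda-\tau)I - D$ and the explicit Neumann series exploiting quasinilpotence of $D$ — are sound and make the argument more self-contained, but the core reasoning coincides with the paper's.
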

\begin{proof}
Applying the functional calculus on bounded operators along with the Spectral Mapping Theorem to the operator $D$ and the function $\phi_\tau (z)$ gives a bounded operator $D_\tau$ with spectrum $\sigma(D_\tau) = \phi_\tau(\{0\}) = \{\tau\}$, where we have used Theorem \ref{Dtheorem} according to which $\sigma(D) = \{0\}$. Furthermore, if $\tau \neq 0$, then $0 \notin \sigma(D_\tau)$ and it follows that $D_\tau$ has a bounded inverse. \qedsymbol
\end{proof}

This gives us a family of operators, each of whose spectra are each a single point, which can be any complex number. Recall that in the situation of the cohomology theory that helped prove the Weil conjectures, we would like an operator whose eigenvalues on different cohomology spaces are the zeros and poles of the zeta function we are interested in. In order to obtain an operator whose spectrum can represent the zero or pole set of a meromorphic function, we use the following construction. If $Z = \{z_1, z_2,...\}$ is a (finite or countable) multiset of complex numbers, let $H_n$ be a copy of the weighted Bergman space $H$ and associate an operator $D_n$ to be $D_{z_n}$ on $H_n$. (Here and thereafter, a multiset is a set with integer multiplicities.) Finally, define the Hilbert space $H_{Z} = \bigoplus_{n} H_n$ with operator $D_{Z} = \bigoplus_{n} D_n$. This gives:

\begin{thm}\label{spectrumthm}
For $Z = \{z_1, z_2, ...\}$, the operator $D_{Z}$, constructed above, has spectrum $\sigma(D_{Z}) = \overline{\{z_1,z_2,...\}}$. Furthermore, for each $i \in \mathbb{N}$, $z_i$ is an eigenvalue of $D_Z$ and the number of linearly independent eigenvectors of $z_i$ for $D_{Z}$ in $H_{Z}$ is equal to the number of times $z_i$ occurs in the multiset $Z = \{z_1,z_2,...\}$. 
\end{thm}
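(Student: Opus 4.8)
The plan is to push everything down to the two facts already established about a single copy of $H$: that $D$ is bounded and quasinilpotent with $\sigma(D)=\{0\}$, and that $0$ is a simple eigenvalue of $D$ with one-dimensional eigenspace spanned by the constant function $\mathbf{1}\equiv 1$ (Theorem \ref{Dtheorem}). First I would note that $D_Z=\bigoplus_n D_n$, equipped with its natural maximal domain $\{(f_n): \sum_n \|D_n f_n\|^2<\infty\}$, is closed and densely defined (and bounded precisely when $\sup_n|z_n|<\infty$), so that its spectrum is well defined, and that for any $\lambda\in\mathbb{C}$ the eigenequation $D_Z(f_n)=\lambda(f_n)$ is equivalent to $D_n f_n=\lambda f_n$ for every $n$, i.e.\ $D f_n=(\lambda-z_n)f_n$ for every $n$.

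Next I would prove $\overline{\{z_1,z_2,\dots\}}\subseteq\sigma(D_Z)$ in two cases. If $\lambda=z_m$ for some $m$, then $D_m\mathbf{1}=(D+z_m I)\mathbf{1}=z_m\mathbf{1}$, so the vector equal to $\mathbf{1}$ in the $m$-th summand and $0$ elsewhere is an honest eigenvector, giving $\lambda\in\sigma_p(D_Z)\subseteq\sigma(D_Z)$. If $\lambda$ lies in the closure but is not any $z_n$, choose distinct indices $n_j$ with $z_{n_j}\to\lambda$; then the unit vectors $v_j$ supported in the $n_j$-th summand and equal there to $\mathbf{1}/\|\mathbf{1}\|$ satisfy $\|(D_Z-\lambda)v_j\|=|z_{n_j}-\lambda|\to 0$, so $\lambda$ lies in the approximate point spectrum, hence in $\sigma(D_Z)$.

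For the reverse inclusion, I would fix $\lambda\notin\overline{\{z_n\}}$ and set $\delta=\operatorname{dist}(\lambda,\overline{\{z_n\}})>0$. Since $D_n-\lambda=D-(\lambda-z_n)I$ with $|\lambda-z_n|\ge\delta$, each block is invertible (as $\sigma(D)=\{0\}$), and the candidate resolvent is $R:=\bigoplus_n(D_n-\lambda)^{-1}$. The one substantive point is the uniform bound $\sup_{|\mu|\ge\delta}\|(D-\mu I)^{-1}\|<\infty$: this holds because $\mu\mapsto\|(D-\mu I)^{-1}\|$ is continuous on $\mathbb{C}\setminus\{0\}$ and is $\le(|\mu|-\|D\|)^{-1}\to 0$ as $|\mu|\to\infty$, hence bounded on $\{|\mu|\ge\delta\}$. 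It is crucial here that every block is the \emph{same} operator $D$ shifted by a scalar; for arbitrary bounded blocks a distance estimate alone would not control the resolvent norms. With this bound, $R$ is bounded, it maps $H_Z$ into the domain of $D_Z$ (because $D_n(D_n-\lambda)^{-1}=I+\lambda(D_n-\lambda)^{-1}$ is uniformly bounded), and one checks blockwise that $R$ is a two-sided inverse of $D_Z-\lambda$; thus $\lambda\in\rho(D_Z)$, completing $\sigma(D_Z)=\overline{\{z_1,z_2,\dots\}}$.

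Finally, for the multiplicity statement, I would return to the equivalence from the first step: $D_Z(f_n)=\lambda(f_n)$ forces $f_n\in\ker(D-(\lambda-z_n)I)$ for each $n$, and by Theorem \ref{Dtheorem} this kernel is $\mathbb{C}\cdot\mathbf{1}$ when $z_n=\lambda$ and $\{0\}$ otherwise. Hence the $\lambda$-eigenspace of $D_Z$ is naturally isomorphic to $\ell^2(\{n: z_n=\lambda\})$, so its dimension is exactly the number of times $\lambda$ occurs in the multiset $Z$ (infinite if that index set is infinite). Taking $\lambda=z_i$ yields both that $z_i$ is an eigenvalue and the asserted count of linearly independent eigenvectors. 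The main obstacle in all of this is the uniform resolvent estimate in the third paragraph; once it is in place, the rest is routine bookkeeping with orthogonal direct sums.
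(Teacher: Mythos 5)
Your proposal is correct and follows the same overall architecture as the paper's argument: place an explicit constant-function eigenvector in each summand to show $z_n\in\sigma_p(D_Z)$ with the right multiplicity, and for $\lambda\notin\overline{\{z_n\}}$ build the block-diagonal inverse $\bigoplus_n(D_n-\lambda)^{-1}$ and show its norm is uniformly controlled. The main place you diverge is in how that uniform resolvent bound is obtained. The paper exploits quasinilpotence quantitatively: since $r(D)=0$, one has $\|D^k\|<(d/2)^k$ for $k\ge N$, so the Neumann series $\frac{1}{\lambda-z_n}\sum_k D^k/(\lambda-z_n)^k$ converges with a bound $C=\sum_{k=0}^N\|D^k\|/d^k+2^{1-N}$ that is explicit and visibly independent of $n$. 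You instead observe that $\mu\mapsto\|(D-\mu I)^{-1}\|$ is a continuous function on $\mathbb{C}\setminus\{0\}$ which tends to $0$ at infinity (via the standard estimate for $|\mu|>\|D\|$), hence is bounded on $\{|\mu|\ge\delta\}$; this is softer and less explicit, but equally valid since $\sigma(D)=\{0\}$ makes the resolvent globally defined off the origin. You also spell out two smaller points the paper handles more tersely: for a limit point $\lambda$ of $\{z_n\}$ that is not itself some $z_n$ you exhibit a Weyl sequence, whereas the paper just invokes closedness of the spectrum; and you explicitly address the maximal domain and the fact that $R$ maps into it when $(z_n)$ is unbounded, a detail the paper leaves implicit. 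Your remark that the argument hinges on all blocks being shifts of the \emph{same} operator $D$ is exactly the right observation and is also the hidden engine of the paper's Neumann-series estimate. Net comparison: the paper's route gives an explicit constant and makes the role of quasinilpotence transparent; yours is slightly more conceptual and cleanly covers the unbounded case, at the cost of a nonconstructive bound.
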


\begin{proof}
For each $n \in \mathbb{N}$, let $e_n \in H_Z$ be the element which is the constant function, with value $1$ in the $n^{th}$ component, and $0$ in every other component. Then $D_Z e_n = z_n e_n$ and so $z_n$ is an eigenvalue with eigenvector $e_n$. Suppose $z_{n_1} = z_{n_2} = \cdots = z_{n_k} = z$. Then $z$ is an eigenvalue with eigenvectors $e_{n_1}, e_{n_2}, ... e_{n_k}$ and so there are at least as many linearly independent eigenvectors of $z$ for $D_Z$ as the multiplicity of $z$ in the multiset. Next, recall that the only eigenvalue of $\frac{d}{dz}$ on $H$ is $0$. Thus, the only eigenvalue of $D_{z_n}$ is $z_n$. Suppose now that $D_Z x = z x$ for some $x$, we must either have the $n^{th}$ component of $x$ being $0$ or $z = z_n$ and so there cannot be any more linearly independent eigenvectors of $z$ for $D_Z$. Now that we know $z_i$ is an eigenvalue of $D_Z$ for each $i$, we know that $\overline{\{z_1, z_2, ...\}} \subset \sigma(D_Z)$. Next, let $\lambda \in \mathbb{C} - \overline{\{z_1, z_2, ... \}}$. Then $d = \inf_{n \geq 0} |\lambda - z_n| > 0$. Since $D = \frac{d}{dz}$ is quasinilpotent, $r(D) = 0$ and so there is a positive integer $N$ such that for every integer $k \geq N$, we have $\lVert D^k \rVert < \left (\frac{d}{2} \right)^k$. Then, on the $n^{th}$ component of $H_Z$, we have that $\sum_{k=0}^\infty {\frac{ D^k }{|\lambda - z_n|}}$ is absolutely convergent because 
\begin{equation*}
\sum_{k=N}^\infty {\frac{\lVert D^k \rVert}{|\lambda - z_n|}} \leq \sum_{k=N}^\infty {\frac{\left ( \frac{d}{2} \right )^k}{d^k}} = \frac{1}{2^{N-1}}.
\end{equation*}
Then we can calculate the inverse on the $n^{th}$ component via the absolutely convergent series:
\begin{equation*}
(\lambda I - D_{z_n})^{-1} = ((\lambda - z_n)I - D)^{-1} = \frac{1}{\lambda - z_n} \sum_{k=0}^\infty \frac{ D^k }{\lambda - z_n}.
\end{equation*}
Further, by the same estimate $\lVert (\lambda I - D_{z_n})^{-1} \rVert \leq C$ uniformly in $n$, where 
\begin{equation}
\displaystyle C = {\sum_{k=0}^N \frac{\lVert D^k \rVert}{d^k}} + 2^{1-N}.
\end{equation}
Therefore, $ \bigoplus_{n} (\lambda I - D_{z_n})^{-1} \in B(H_Z)$ and so $(\lambda I - D_Z)^{-1}$ exists and is bounded. That is, $\lambda \in \rho(D_Z)$, the resolvent set of $D_Z$; recall that by definition, $\sigma(D_Z)$ is the complement of $\rho(D_Z)$ in $\mathbb{C}$. Hence, $\sigma(D_{Z}) = \overline{\{z_1,z_2,...\}}$. \qedsymbol
\end{proof}
\begin{corollary}
If $Z = \{z_1, z_2, ...\}$ is either the zero or pole set of a meromorphic function $f(z)$, then $Z$ is a discrete set and so we have exactly $\sigma(D_Z) = Z$ counting multiplicity. Moreover, each $z_i$ in $Z$ is an eigenvalue of $D_Z$, with multiplicity equal to the multiplicity of $z_i$ in the multiset $Z$.
\end{corollary}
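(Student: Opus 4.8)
The plan is to derive the corollary as a direct consequence of Theorem \ref{spectrumthm}, the only additional input being the classical fact that the zero set (resp. pole set) of a meromorphic function on $\mathbb{C}$ is discrete, hence has no accumulation point in $\mathbb{C}$. First I would invoke Theorem \ref{spectrumthm} to get $\sigma(D_Z) = \overline{\{z_1, z_2, \dots\}}$, together with the statement that each $z_i$ is an eigenvalue of $D_Z$ whose geometric multiplicity equals the multiplicity of $z_i$ in the multiset $Z$. It then remains only to observe that the closure operation is vacuous here: since $f$ is meromorphic on $\mathbb{C}$ and not identically zero (resp. $f$ is not entire, so that its pole set is genuinely the one under consideration), the set of zeros (resp. poles) is a discrete subset of $\mathbb{C}$ with no finite accumulation point, so as a set $\{z_1, z_2, \dots\}$ is already closed in $\mathbb{C}$, i.e. $\overline{\{z_1,z_2,\dots\}} = \{z_1,z_2,\dots\}$. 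Combining these gives $\sigma(D_Z) = Z$ as a set, and "counting multiplicity" is then exactly the eigenvalue-multiplicity statement already furnished by Theorem \ref{spectrumthm}.

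In more detail, the steps in order would be: (1) note that if $f$ is meromorphic on all of $\mathbb{C}$, its poles form a set with no accumulation point in $\mathbb{C}$ (otherwise $f$ would fail to be meromorphic there), and likewise, if $f \not\equiv 0$, its zeros form a set with no accumulation point in $\mathbb{C}$ (by the identity theorem); (2) conclude that $Z$, listed as a multiset with the obvious underlying set, is a closed discrete subset of $\mathbb{C}$; (3) apply Theorem \ref{spectrumthm} to this particular $Z$, so that $\sigma(D_Z) = \overline{Z} = Z$; (4) quote the second assertion of Theorem \ref{spectrumthm} verbatim to get that each $z_i \in Z$ is an eigenvalue of $D_Z$ with geometric multiplicity equal to its multiplicity in $Z$, which is the precise meaning of "$\sigma(D_Z) = Z$ counting multiplicity."

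There is essentially no obstacle here; the corollary is a specialization of Theorem \ref{spectrumthm} in which the extra hypothesis (discreteness of $Z$) removes the closure in the description of the spectrum. The only point requiring a word of care — and the one I would state explicitly rather than leave implicit — is that "discrete" must mean discrete \emph{and closed in $\mathbb{C}$}, i.e. no accumulation point at a finite value; an accumulation point at $\infty$ is harmless (and indeed typical, e.g. for the Riemann zeta function, whose nontrivial zeros accumulate only at $\infty$), since $\infty \notin \mathbb{C}$ and therefore does not enter $\overline{Z}$. With that understood, the identity $\overline{Z} = Z$ holds and the proof is complete.
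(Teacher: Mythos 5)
Your proposal is correct and follows exactly the path the paper intends: the corollary is stated without a separate proof precisely because it is the specialization of Theorem \ref{spectrumthm} to a set $Z$ that, being the zero set or pole set of a meromorphic function, is discrete and closed in $\mathbb{C}$, so $\overline{Z}=Z$ and the spectral and multiplicity statements carry over verbatim. Your explicit flagging of the point that ``discrete'' must mean having no finite accumulation point (accumulation at $\infty$ being irrelevant to the closure in $\mathbb{C}$) is the right thing to make explicit.
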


Thus, $D_Z$ has all of the information from the multiset $\{z_1,z_2,...\}$ contained in its spectrum. If we then consider the multiset to be the zeros and poles of a meromorphic function $f(z)$, then the operator $D_Z$ contains these pieces of information of this function. This was, in fact, the original goal of this direction. In \cite{HerLap1} (also \cite{HerLap2}, \cite{HerLap3}, and \cite{HerLap4}) the spectrum of the operators studied there were vertical lines in the complex plane and the values of $\zeta(s)$ on these lines. This work was meant to approach the problem in a similar, but different, way and isolate out the zeros and poles of certain meromorphic functions. Theorem \ref{spectrumthm} gives a positive result that we have created such an operator. However, we wanted to go further and find a way to use determinant formulas to fully recover all of the values of the desired function as was done in the Weil conjectures. Unfortunately, we cannot use the determinant formulas for operators given in Section 2 for the operator $D_Z$ to recover $f(z)$ as a whole, because with this formulation $D_Z$ is not trace class. Even looking at just a single one of the terms, $D_\tau = D + \tau I$, with $\tau \neq 0$, is not compact, let alone trace class (or, consequently, in any of the operator ideals $J_p$), because our Hilbert space is infinite dimensional. (Indeed, in a Banach space, the identity operator, like the closed unit ball, is compact if and only if the space is finite-dimensional.)  

The first such modification we will make is to restrict each $D_\tau$ to its eigenspace, $E$, the space of constant functions. This restriction is then a compact operator. We also need to make a second adjustment from the original idea. Any zero or pole set of a meromorphic function will be a discrete set, and hence if there are infinitely many zeros or poles, they must tend to $\infty$. This would then imply that the operator $D_Z$ described here is unbounded. What allows us to repair this problem and simultaneously recover the given function $f(z)$ using determinants, is to have our set $Z$ consist of the reciprocals of the zeros rather than of the zeros themselves, and similarly for the poles. (In each case, the multiplicities of the zeros or the poles are taken into account.) The set of reciprocals will not necessarily be discrete as if the set is infinite, the sequence will tend to 0. However, rather than being a problem, this is actually completely required. Indeed, a compact operator on an infinite dimensional space cannot have a bounded inverse and so 0 must be in the spectrum of any such compact operator. Combining this observation with the fact that regularized determinants apply only to trace ideals of compact operators, we see that having 0 in the closure of the set of reciprocals is necessary to apply the determinant theory to $D_Z$.  

One comment to make about the restriction of the operator $D_Z$ to its total eigenspace, $E_Z$, is that it simplifies the operator to a multiplication operator because the derivative on constant functions is just the zero operator. This is unfortunate as we do lose some of the rich setting of the Bergman space that has been used thus far. We are currently exploring alternative constructions in \cite{CobLap} that would allow us to remove this restriction and work on all of $H$. However, as we will see in the next section, the new version of the operator obtained by restriction will retain the desired spectrum from Theorem \ref{spectrumthm}.

In addition, we observe that in some sense, by analogy with what happens for curves over finite fields for Weil-type cohomologies and with what is expected in more general situations associated with the Riemann zeta function and other $L$-functions (see, e.g., \cite{Den1, Die} and \cite[esp. Appendix B]{ISRZ}), $E_Z$ (the total eigenspace of $D_Z$) is the counterpart in our context of the total cohomology space (or, in the terminology of \cite{ISRZ, LapvFr}, the total ``fractal cohomology space'') and correspondingly, the restriction of the original (generalized) Polya--Hilbert operator $D_Z$ to its eigenspace $E_Z$ is the counterpart of the linear endomorphism induced by the Frobenius morphism on the (total) cohomology space. (See Section 1.1 above.) Therefore, this modification of the original operator seems natural (and perhaps unavoidable) in order to obtain a suitable determinant formula, of the type obtained in Section 4.1 and Section 5 below. 

\subsection{Refining the Operator of a Meromorphic Function}

First, let $Z = (z_n)$ be a sequence of complex numbers. Let $D_n := D + z_n I$ be the operator in the previous section restricted to the subspace of constant functions $E$. (It is clear that $D_n$ is normal.) Let $D_Z = \bigoplus_n D_n$ act on the space $E_Z = \bigoplus_n E$ which is a closed subspace of the Hilbert space $H_Z$ from the last section. So in actuality, this new definition of $D_Z$ is just the restriction to the Hilbert space $E_Z$ of the operator given in the previous section. First we note that this restriction still retains the main property from the last section.

\begin{thm}
For each $n \in \mathbb{N}$, each $z_n$ is an eigenvalue of $D_Z$ and the number of linearly independent eigenvectors associated to $z_n$ is equal to the number of times $z_n$ occurs in the sequence $Z$. Furthermore, $\sigma(D_Z)=\overline{\{z_n : n=1,2,3,...\}}$.
\end{thm}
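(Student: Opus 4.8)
The plan is to exploit the fact that restricting to constant functions collapses each summand $E$ to a one-dimensional space on which $D_n = D + z_n I$ acts as multiplication by the scalar $z_n$, so that $D_Z$ becomes an honest diagonal (multiplication) operator; all three assertions then follow from standard facts about such operators, in a way that parallels the proof of Theorem~\ref{spectrumthm}.

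First I would pin down the structure. Since $D u_0 = 0$ for the normalized constant function $u_0$ spanning $E$, we have $D_n u_0 = z_n u_0$, so in the orthonormal basis $\{u_0\}$ of $E$ the restricted operator $D_n$ is just the scalar $z_n$. Consequently, under the natural unitary identification $E_Z = \bigoplus_n E \cong \ell^2(\mathbb{N})$ carrying the basis vector $u_0$ of the $n$th summand to the $n$th standard basis vector $e_n$, the operator $D_Z$ is unitarily equivalent to the multiplication operator $M_Z\colon (x_n)\mapsto (z_n x_n)$ on $\ell^2(\mathbb{N})$, taken with its maximal domain $\mathcal{D}(M_Z) = \{(x_n): \sum_n |z_n x_n|^2 < \infty\}$. (This domain equals all of $E_Z$, and $M_Z$ is bounded, precisely when $\sup_n |z_n| < \infty$, which holds in all cases of interest here — in particular when $Z$ consists of reciprocals of zeros or poles — but need not be assumed for the theorem.) Being a direct sum of bounded operators, $D_Z$ is closed, and the finitely supported sequences form a core, so $D_Z$ is densely defined and this reduction is legitimate.

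Next I would read off the eigenvalues. Clearly $M_Z e_k = z_k e_k$, so each $z_k$ is an eigenvalue. If $z_k$ occurs exactly at the indices in a set $S \subseteq \mathbb{N}$, then $\{e_j : j\in S\}$ is a linearly independent family of eigenvectors for $z_k$; conversely, $M_Z x = z_k x$ forces $(z_n - z_k)x_n = 0$ for every $n$, hence $x_n = 0$ whenever $z_n \neq z_k$, so the $z_k$-eigenspace is precisely $\overline{\operatorname{span}}\{e_j : j\in S\}$ and its dimension equals $|S|$, the multiplicity of $z_k$ in $Z$. For the spectrum: each $z_n \in \sigma(D_Z)$ since it is an eigenvalue, and $\sigma(D_Z)$ is closed, so $\overline{\{z_n : n \in \mathbb{N}\}} \subseteq \sigma(D_Z)$. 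For the reverse inclusion, if $\lambda \notin \overline{\{z_n : n \in \mathbb{N}\}}$ then $d := \inf_n |\lambda - z_n| > 0$, so $(x_n)\mapsto ((\lambda - z_n)^{-1}x_n)$ is a bounded operator of norm at most $1/d$ mapping $\ell^2(\mathbb{N})$ into $\mathcal{D}(M_Z)$ and serving as a two-sided inverse of $\lambda I - M_Z$; hence $\lambda \in \rho(D_Z)$. This yields $\sigma(D_Z) = \overline{\{z_n : n=1,2,3,\dots\}}$.

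There is no serious obstacle here: once the reduction to a diagonal operator is in place the content is entirely standard. The only point deserving a little care is the bookkeeping of domains when $(z_n)$ is unbounded, so that $D_Z$ is genuinely an unbounded operator — which is why I would spell out $\mathcal{D}(M_Z)$ explicitly and note that it is mapped into itself by $(\lambda I - M_Z)^{-1}$. If one restricts attention to bounded sequences $(z_n)$, and in particular to the reciprocals of the zeros or poles of a meromorphic function (which tend to $0$ and hence form a bounded sequence), the argument simplifies and becomes essentially a verbatim repetition of the corresponding part of the proof of Theorem~\ref{spectrumthm}.
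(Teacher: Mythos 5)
Your proof is correct and takes essentially the same approach as the paper: both hinge on the observation that restricting each $D_n$ to the space of constant functions kills the derivative part, so that $D_Z$ acts as a diagonal multiplication operator, from which the eigenvalue multiplicities and the identity $\sigma(D_Z)=\overline{\{z_n\}}$ follow by the standard argument. You make the unitary identification with a multiplication operator on $l^2(\mathbb{N})$ and the resolvent bound explicit and self-contained, whereas the paper's proof is terser, simply noting that the eigenvectors $e_n$ already lie in $E_Z$ and that $\sigma(D_{z_n})=\{z_n\}$, and then appealing to the argument of Theorem~\ref{spectrumthm}.
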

\begin{proof}
Let $e_n$ be the eigenfunctions from the previous proof. Then since $e_n$ is constant in each coordinate, $e_n \in E_Z$. Thus when restricted to the space of functions constant on each coordinate, $D_Z$ retains all of its eigenvalues and eigenvectors from before. Finally, we note that $\sigma(D_{z_n}) = \{z_n\}$ from which it follows that $\sigma(D_Z)=\overline{\{z_n : n=1,2,3,...\}}$ as in the proof of Theorem \ref{spectrumthm}. \qedsymbol
\end{proof}

\begin{remark}
Note that in the case when $Z = (z_n)$ is the sequence of the reciprocals of the nonzero elements in the zero set or the pole set of a given meromorphic function (as in Section 5 below), then $\sigma(D_Z) = Z$ if $Z$ is finite and $\sigma(D_Z) = Z \bigcup \{0\}$ if $Z$ is infinite.
\end{remark}

The next theorem shows that this restriction of the operator will truly give us what we need for our quantized number theory framework.

\begin{thm}\label{mainthm}
We have the following relationships between an infinite sequence $Z = (z_n)$ and the associated operator $D_Z$. \\
1) $D_Z$ is bounded iff $(z_n)$ is a bounded sequence.\\
2) $D_Z$ is self-adjoint iff $z_n \in \mathbb{R}$ for all $n$.\\
3) $D_Z$ is compact iff $\displaystyle \lim_{n \to \infty} z_n = 0$.\\
4) $D_Z$ is Hilbert--Schmidt iff $ \sum_{n=1}^\infty |z_n|^2 < \infty$. \\
5) $D_Z$ is trace class iff $ \sum_{n=1}^\infty |z_n| < \infty$. \\
6) For $p \geq 1$, $D_Z \in J_p$ iff $ \sum_{n=1}^\infty |z_n|^p < \infty$.\\
If $(z_n)$ is a finite sequence, then $D_Z$ is bounded, compact, and in $J_p$ for each $p \geq 1$.
\end{thm}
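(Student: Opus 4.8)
The plan is to reduce everything to a concrete description of $D_Z$ restricted to $E_Z$ as a multiplication (diagonal) operator and then invoke the standard characterizations of boundedness, self-adjointness and membership in the trace ideals $J_p$ for diagonal operators. First I would observe that since $D$ restricted to the space $E$ of constant functions is the zero operator, each $D_n = D + z_n I$ acts on the one-dimensional space $E$ simply as multiplication by $z_n$. Consequently, choosing the obvious orthonormal basis $\{e_n\}$ of $E_Z = \bigoplus_n E$ (where $e_n$ is the constant function $1$ in the $n$th slot and $0$ elsewhere), the operator $D_Z = \bigoplus_n D_n$ is exactly the diagonal operator $D_Z e_n = z_n e_n$. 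This makes $E_Z$ isometrically isomorphic to $\ell^2$ with $D_Z$ identified with the multiplication operator by the sequence $(z_n)$.

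Next I would dispatch the individual items. For (1), boundedness of a diagonal operator is equivalent to $\sup_n |z_n| < \infty$, i.e.\ $(z_n)$ bounded; the norm is precisely $\sup_n |z_n|$. For (2), the adjoint of the diagonal operator with entries $z_n$ is the diagonal operator with entries $\overline{z_n}$, so $D_Z = D_Z^*$ iff $z_n = \overline{z_n}$ for all $n$, i.e.\ $z_n \in \mathbb{R}$; here one should note $D_Z^*$ is computed componentwise from the adjoint of each $D_n$ on $E$, which is immediate since $E$ is one-dimensional. For (3), a diagonal operator is compact iff its diagonal entries tend to $0$ (finite-rank truncations converge in norm iff $z_n \to 0$, and conversely compactness forces $z_n \to 0$ since the $e_n$ are orthonormal). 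For (6), the singular values of the diagonal operator are exactly the numbers $|z_n|^*$ (the decreasing rearrangement of $(|z_n|)$), so by Definition \ref{Jpdef} we have $D_Z \in J_p$ iff $\sum_n |z_n|^p < \infty$; items (4) and (5) are then the special cases $p = 2$ (Hilbert--Schmidt) and $p = 1$ (trace class). Finally, if $(z_n)$ is a finite sequence then $E_Z$ is finite-dimensional, so $D_Z$ is automatically bounded, compact, and in every $J_p$.

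I do not expect any serious obstacle here; the only point requiring a little care is making the identification of $D_Z|_{E_Z}$ with a diagonal operator fully rigorous, in particular checking that forming the adjoint and the singular-value decomposition commutes with the direct-sum (orthogonal direct sum) construction, and that $|z_n|^*$ really is the relevant rearrangement appearing in the definition of $J_p$ via the Calkin space $\ell^p$. Once the diagonal picture is in place, each of the six equivalences is a textbook fact about multiplication operators on $\ell^2$, so the proof amounts to assembling these facts and citing the trace-ideal framework of Section 2 (Definition \ref{Jpdef} and the surrounding discussion). The statement about finite sequences is then an immediate corollary of the finite-dimensionality of $E_Z$.
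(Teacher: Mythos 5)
Your proposal is correct and takes essentially the same route as the paper: identify $D_Z\big|_{E_Z}$ with the diagonal multiplication operator $(a_n)\mapsto(z_n a_n)$ on $\ell^2$ via the orthonormal basis $\{e_n\}$, then read off boundedness, compactness, and $J_p$-membership from the standard characterizations for diagonal operators (indeed, the paper itself remarks afterward that the result is ``well known from the theory of multiplication operators on sequence spaces''). If anything your write-up is slightly more complete, since the paper's proof mislabels its cases and never explicitly treats the self-adjointness claim (2), which you handle by noting that the adjoint of the diagonal operator has entries $\overline{z_n}$.
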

\begin{proof}
Since $\lVert D_n \rVert = |z_n|$, for each $n \in \mathbb{N}$, we have $\lVert D_Z \rVert = \sup_{n} |z_n|$. Then $D_Z$ is bounded iff $(z_n)$ is a bounded sequence. For 2), consider the sequence of operators $\overline{D}_N = \bigoplus_{n=1}^N D_n$, for $N \in \mathbb{N}$, as an operator on $E_Z$ by letting it act as multiplication by $0$ on the remaining components. Thus $\overline{D}_N$ is a finite rank operator on $E_Z$ for each $N$. Then $\lVert D_Z - \overline{D}_N \rVert = \sup_{k>N} |z_k|$ and so if $\lim_{n \to \infty} z_n = 0$, we then have that $D_Z$ is the norm limit of finite rank operators and thus is compact. On the other hand, if $\lim_{n \to \infty} z_n \neq 0$ then $\{e_n\}$ is a bounded sequence of vectors such that $\{ D_Z e_n\}$ has no convergent subsequence. Thus, $ D_Z$ is not compact. For 3) and 4), assume that $ D_Z$ is compact. Then since $ D_Z e_n = z_n e_n$ and the fact that $\{e_n\}$ forms an orthonormal basis for $E_Z$ we know the singular values of $E_Z$ are $\{z_n^*\}$, which is just the sequence of numbers $|z_n|$ arranged in nonincreasing order. Thus $E_Z$ is Hilbert--Schmidt iff $ \sum_{n=1}^\infty |z_n|^2 < \infty$, and, trace class iff $ \sum_{n=1}^\infty |z_n| < \infty$ and, more generally, in $J_p$ iff $ \sum_{n=1}^\infty |z_n|^p < \infty$. Finally, if $(z_n)$ is a finite sequence, then $D_Z$ is actually a finite rank operator and is trivially bounded, compact, and in $J_p$ for each $p \geq 1$. \qedsymbol
\end{proof}

Note that the above result is well known from the theory of multiplication operators on sequence spaces and is just translated here in our setting. Now that we have a formulation that can indeed give us a trace class operator, we can state the result we will use to fully recover certain functions of interest.

\begin{corollary}
If $\{z_n\}$ is a sequence of complex numbers satisfying $ \sum_{n=1}^\infty |z_n| < \infty$, then we have $\det(I - zD_Z) = \prod_n (1- z_n z)$.
\end{corollary}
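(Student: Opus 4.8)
The plan is to combine the trace-ideal characterization from Theorem~\ref{mainthm} with the eigenvalue product formula \eqref{deteqn} for Fredholm determinants. Since $\sum_n |z_n| < \infty$, part 5) of Theorem~\ref{mainthm} tells us that $D_Z \in J_1$, i.e.\ $D_Z$ is trace class, so the Fredholm determinant $\det(I - zD_Z)$ is well defined (and entire in $z$) by the discussion following Definition~\ref{Jpdef}. Thus the statement is really an identity of entire functions, and it suffices to identify the nonzero eigenvalues of $D_Z$ together with their algebraic multiplicities.

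First I would recall the diagonal structure established in the preceding theorem: on $E_Z = \bigoplus_n E$, with $e_n$ the vector that is the constant function $1$ in the $n^{\text{th}}$ slot and $0$ elsewhere, we have $D_Z e_n = z_n e_n$, and $\{e_n\}$ is an orthonormal basis of $E_Z$. Hence $D_Z$ is (unitarily equivalent to) the multiplication operator by the sequence $(z_n)$ on $\ell^2$, its spectrum is $\overline{\{z_n\}}$, and — crucially — its set of \emph{nonzero} eigenvalues, listed with multiplicity, is exactly the multiset $\{z_n : z_n \neq 0\}$, each counted with its multiplicity of occurrence in the sequence $Z$. There are no generalized eigenvectors to worry about since $D_Z$ is diagonalizable in an orthonormal basis, so the algebraic and geometric multiplicities coincide.

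Next I would simply substitute into \eqref{deteqn}: with $A = D_Z$ trace class and $\lambda_k(A)$ running over the nonzero eigenvalues, $\det(I + \mu A) = \prod_{k=1}^{N(A)} (1 + \mu \lambda_k(A))$, and the product converges. Taking $\mu = -z$ and noting that the nonzero $\lambda_k$ are precisely the nonzero $z_n$ gives $\det(I - zD_Z) = \prod_{k : z_{n_k} \neq 0} (1 - z_{n_k} z)$. Finally, the factors with $z_n = 0$ contribute $(1 - 0 \cdot z) = 1$, so they may be harmlessly reinserted, yielding $\det(I - zD_Z) = \prod_n (1 - z_n z)$ as claimed. (If $Z$ is finite this is a finite product and the same computation applies trivially, using that $D_Z$ is then finite rank.)

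The only real subtlety — and the step I would be most careful about — is the bookkeeping of multiplicities: one must be sure that the eigenvalue product in \eqref{deteqn}, which counts each nonzero eigenvalue with its \emph{algebraic} multiplicity, matches the product $\prod_n(1 - z_n z)$, in which a value $z$ appearing $m$ times in the sequence $Z$ contributes the factor $(1 - zz)^m$. This is exactly where the orthonormal-basis diagonalization pays off: because $D_Z$ acts diagonally on $\{e_n\}$, an eigenvalue occurring $m$ times among the $z_n$ has an $m$-dimensional eigenspace (spanned by the corresponding $e_n$'s) and no Jordan blocks, so its algebraic multiplicity is $m$. With that matching verified, the identity follows immediately. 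A minor secondary point is convergence of the infinite product, but this is automatic from trace-class-ness via the last sentence of the paragraph containing \eqref{deteqn}, since $\sum_n |z_n| < \infty$ controls $\sum_k |\lambda_k(D_Z)|$.
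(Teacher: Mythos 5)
Your proof is correct and takes essentially the same approach as the paper's: the paper's own proof is a single sentence noting that the identity is a direct consequence of Equation~\eqref{deteqn} once one observes (via part 5 of Theorem~\ref{mainthm}) that $\sum_n |z_n| < \infty$ makes $D_Z$ trace class. What you have added — the explicit diagonalization on the basis $\{e_n\}$, the multiplicity bookkeeping, and the remark about zero eigenvalues — is a careful unpacking of exactly that argument rather than a different route.
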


\begin{proof}
This is a direct consequence of Equation (\ref{deteqn}) for trace class operators of which $D_Z$ is one when the series is absolutely summable. \qedsymbol
\end{proof}

By the previous corollary, we can now see that we will be getting an entire function out of our construction. Thus if we want to handle meromorphic functions, we will need to handle zeros and poles separately. Also, we will want to choose our sequence $(z_n)$ to be the reciprocals of the zeros and, separately, of the poles. With this in mind, we make the following final construction for our operator associated with a meromorphic function.

First, let $f(z)$ be an entire function on $\mathbb{C}$ with $z=0$ not a zero of $f$. Let $\{a_n\}$ be a sequence of the zeros of $f(z)$, counting multiplicity. Define the sequence $Z = (z_n)$ where $z_n = \frac{1}{a_n}$. Define $D_Z$ as before and call this $D_{f}$. Now given an integer $m \geq 1$, if we have $D_{f} \in J_m \setminus J_{m-1}$, then $\det_m (I - zD_Z)$ is well defined, where the regularized determinant $\det_m$ was defined in Section 2. (See, especially, Definition \ref{detndef} and Theorem \ref{detnthm}.) Finally, we note that if we are dealing with a meromorphic function instead of an entire function, we follow the lead from the proof of the Weil conjectures to simply take the ratio of these regularized determinants, with possibly the operator associated to the numerator being in different trace classes (that is, in different operator ideals) than that of the denominator. 

In the next section, we will examine what this construction accomplishes for several classically important functions in number theory.

\section{Applications of the Construction}

In this section, we apply our construction to some special meromorphic functions of interest and conclude with showing that this construction does indeed give a suitable replacement for the Frobenius for any entire function of finite order. 

\subsection{Rational Functions}

To begin, we look at the simplest type of meromorphic functions: the rational functions. Let $f(z)$ be a rational function. Then, we can write $f(z) = z^k g(z)$ for some $k \in \mathbb{Z}$ and further $\displaystyle g(z) = g(0) \frac{h(z)}{k(z)}$, where $\displaystyle  h(z) = \prod_{n=1}^s (1 - \frac{z}{a_n})$ and $\displaystyle k(z)=\prod_{n=1}^t (1 - \frac{z}{b_n})$ for some finite set $\{a_1, a_2, ..., a_s, b_1, b_2, ..., b_t\}$. Construct the operator $D_{h(z)}$ and $D_{k(z)}$ as given in the previous chapter. The following theorem tells us that the determinant exactly recovers the given function $f$.

\begin{thm}\label{rationalthm}
If $f(z)$ is a rational function as above, then 
\begin{equation}
f(z) = z^k g(0) \frac{\det_1 (I - z D_{h(z)})}{\det_1(I-zD_{k(z)})}.
\end{equation} 
\end{thm}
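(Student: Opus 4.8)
The plan is to reduce the statement to the finite-dimensional factorization of $h$ and $k$ together with the trace-class determinant identity already established in the Corollary following Theorem \ref{mainthm}. First I would observe that since $f$ is rational, the sets of zeros $\{a_1,\dots,a_s\}$ and poles $\{b_1,\dots,b_t\}$ of $g$ are finite, so the associated sequences $Z_h = (1/a_n)_{n=1}^s$ and $Z_k = (1/b_n)_{n=1}^t$ are finite sequences. By the last clause of Theorem \ref{mainthm}, the operators $D_{h(z)} = D_{Z_h}$ and $D_{k(z)} = D_{Z_k}$ are then finite-rank, hence trace class, so the ordinary Fredholm determinant $\det_1$ applies and coincides with $\det_n$ for every $n$ by Definition \ref{detndef} (here $R_n$ of a finite-rank operator is again finite-rank and the product formula of Theorem \ref{detnthm} collapses to a finite product).

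Next I would invoke the Corollary to Theorem \ref{mainthm} (or directly Equation (\ref{deteqn})): since $D_{h(z)}$ has eigenvalues exactly $1/a_1,\dots,1/a_s$ counted with multiplicity, we get
\begin{equation*}
\det_1(I - z D_{h(z)}) = \prod_{n=1}^s \left(1 - \frac{z}{a_n}\right) = h(z),
\end{equation*}
and likewise $\det_1(I - z D_{k(z)}) = k(z)$. Substituting these two identities into the right-hand side of the claimed formula gives
\begin{equation*}
z^k g(0)\,\frac{\det_1(I - z D_{h(z)})}{\det_1(I - z D_{k(z)})} = z^k g(0)\,\frac{h(z)}{k(z)} = z^k g(z) = f(z),
\end{equation*}
where the middle equality is just the defining factorization $g(z) = g(0)\,h(z)/k(z)$ recalled before the theorem statement, and the last is $f(z) = z^k g(z)$. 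This completes the argument.

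There is essentially no hard step here: the content of the theorem is entirely bookkeeping, assembling the finite-rank case of Theorem \ref{mainthm}, the product formula (\ref{deteqn}), and the elementary partial-fraction/monomial factorization of a rational function. The only point deserving a word of care is the claim that $\det_m = \det_1$ on finite-rank operators for the $m$ implicit in writing $\det_1$, which I would justify by noting that for a finite-rank (indeed any trace class) operator the regularizing factor $\exp\left(\sum_{j=1}^{m-1}(-1)^j j^{-1}\mathrm{Tr}((zD)^j)\right)$ from the last displayed theorem of Section 2 is a well-defined nonvanishing entire function, so one could alternatively state the result with $\det_m$ for arbitrary $m$ at the cost of an explicit exponential prefactor; choosing $\det_1$ is simply the cleanest normalization. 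I would also remark that the hypothesis $z=0$ not being a zero or pole of $g$ (absorbed into the monomial $z^k$) is what makes the reciprocals $1/a_n$, $1/b_n$ well defined, so no separate handling of a zero or pole at the origin is needed beyond the explicit factor $z^k$.
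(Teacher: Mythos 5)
Your proof is correct and follows essentially the same route as the paper's: identify the finite sequences of reciprocal zeros and poles, note that the associated operators are finite-rank hence trace class, apply the Fredholm product formula (\ref{deteqn}) to get $\det_1(I - z D_{h(z)}) = h(z)$ and $\det_1(I - z D_{k(z)}) = k(z)$, and substitute into $f(z) = z^k g(0)\,h(z)/k(z)$. One small slip worth fixing: your parenthetical assertion that $\det_1$ ``coincides with $\det_n$ for every $n$'' on finite-rank operators is false --- as you yourself acknowledge a few lines later, $\det_n$ and $\det_1$ differ by the nonvanishing exponential prefactor $\exp\bigl(\sum_{j=1}^{n-1}(-1)^j j^{-1}\operatorname{Tr}((zD)^j)\bigr)$ --- but since the argument only ever invokes $\det_1$, this does not affect the validity of the proof.
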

\begin{proof}
Write out $f(z)$ as given in the preceding paragraph. Then consider the finite sequences $Z = \{\frac{1}{a_1}, \frac{1}{a_2}, ..., \frac{1}{a_s}\}$ and $P = \{\frac{1}{b_1}, \frac{1}{b_2}, ..., \frac{1}{b_t}\}$. The operators $D_Z$ and $D_P$ are both trivially trace class since both are created from finite sequences. Hence, we may apply the $1$-regularized determinant (really, just the normal Fredholm determinant since both operators are trace class) we defined to obtain
\begin{equation}
\frac{\det_1 (I - z D_{h(z)})}{\det_1(I-zD_{k(z)})} =  \frac{\prod_{n=1}^s (1 - \frac{z}{a_n})}{\prod_{n=1}^t (1 - \frac{z}{b_n})} =  \frac{g(z)}{g(0)}
\end{equation}
and thus, $f(z) = z^k g(0) \frac{\det_1 (I - z D_{h(z)})}{\det_1(I-zD_{k(z)})} $. \qedsymbol
\end{proof}

We consider $D_{h(z)}$ to be the analog of Frobenius for $h(z)$ (zeros) whereas $D_{k(z)}$ would be the analog for $k(z)$ (poles). Then this ratio of determinants would be considered a graded determinant associated with the Frobenius of the divisor (zeros minus poles) of $g(z)$. 

\subsection{Zeta Functions of Curves Over Finite Fields}

Recall that the zeta function of a (smooth, algebraic) curve $Y$ over the finite field $\mathbb{F}_q$ is defined as $\zeta_Y(s) = \text{exp} \left (\displaystyle \sum_{n=1}^\infty {\frac{Y_n}{n} q^{-ns}} \right )$. The proof of the Weil conjectures expressed this function as an alternating product of determinants as follows: 
\begin{equation*}
\zeta_Y(s) = \frac{\det(I - F^* q^{-s} | H^1)}{\det(I - F^* q^{-s} | H^0)\det(I - F^* q^{-s} | H^2)}.
\end{equation*}
One of the Weil conjectures, that $\zeta_Y( s)$ is a rational function of $q^{-s}$, then followed from this formula. Thus we may apply the result in the previous section about rational functions to obtain the following theorem.

\begin{thm}
Let $Y$ be a smooth, projective, geometrically connected curve over $\mathbb{F}_q$, the field with $q$ elements. Write $\zeta_Y(s) = \frac{f(q^{-s})}{g(q^{-s})}$ with $f(z), g(z)$ both polynomials. Then
\begin{equation}
\zeta_Y(s) = \frac{\det_1 \left (I - q^{-s} D_{f}\right )}{\det_1 \left ( I - q^{-s} D_{g}\right )}.
\end{equation}
\end{thm}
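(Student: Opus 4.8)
The plan is to reduce this statement directly to Theorem \ref{rationalthm} (the rational functions case), since $\zeta_Y(s)$ is, by the Weil conjectures, a rational function of the variable $z = q^{-s}$. First I would recall that the zeta function of $Y$ can be written as $\zeta_Y(s) = \frac{P(q^{-s})}{Q(q^{-s})}$ where $P(z) = \det(I - F^*z \mid H^1)$ and $Q(z) = \det(I - F^*z \mid H^0)\det(I - F^*z \mid H^2)$ are polynomials in $z$; in fact $Q(z) = (1-z)(1-qz)$ for a smooth projective geometrically connected curve, and $P(z)$ has degree $2g$ with $P(0) = 1$. In particular, setting $f(z) = P(z)$ and $g(z) = Q(z)$, both are polynomials with $f(0) = g(0) = 1$ and with $z = 0$ not a zero of either, so the construction of Section 4.1 applies verbatim: writing $f(z) = \prod_{n=1}^{s}(1 - z/a_n)$ and $g(z) = \prod_{n=1}^{t}(1 - z/b_n)$ over the (finitely many) reciprocal roots, we form the finite sequences $Z = (1/a_n)$ and $P = (1/b_n)$ and the associated operators $D_f = D_Z$ and $D_g = D_P$.

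Next I would invoke Theorem \ref{mainthm}: since $Z$ and $P$ are \emph{finite} sequences, $D_f$ and $D_g$ are finite-rank, hence trace class, so the $1$-regularized (Fredholm) determinant $\det_1$ applies to each. Then the Corollary to Theorem \ref{mainthm} (the product formula $\det(I - zD_Z) = \prod_n(1 - z_n z)$) gives $\det_1(I - q^{-s}D_f) = \prod_{n=1}^{s}(1 - q^{-s}/a_n) = f(q^{-s})$ and likewise $\det_1(I - q^{-s}D_g) = g(q^{-s})$. Dividing, and using the rationality expression $\zeta_Y(s) = f(q^{-s})/g(q^{-s})$, yields exactly
\begin{equation*}
\zeta_Y(s) = \frac{\det_1(I - q^{-s}D_f)}{\det_1(I - q^{-s}D_g)}.
\end{equation*}
This is really just Theorem \ref{rationalthm} applied to the rational function $F(z) := f(z)/g(z)$ at $z = q^{-s}$, with the trivial factors $z^k$ and $g(0)$ equal to $1$.

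The only genuine input beyond the earlier machinery of the paper is the rationality of $\zeta_Y(s)$ in $q^{-s}$ together with the normalization $f(0) = g(0) = 1$ and $0 \notin \{a_n\} \cup \{b_n\}$ — i.e., that the construction's hypothesis "$z = 0$ is not a zero" is met. This is where I expect the only subtlety: one must make sure the chosen representation $\zeta_Y(s) = f(q^{-s})/g(q^{-s})$ has numerator and denominator in the reduced/normalized form the construction demands (no common factors, constant term $1$, no zero at the origin). For a smooth projective geometrically connected curve this is standard — $f$ and $g$ come from the characteristic polynomials of Frobenius on $H^1$ and on $H^0 \oplus H^2$ respectively, all of which have nonzero constant term — so no common vanishing at $z=0$ occurs and the normalization $f(0)=g(0)=1$ is automatic. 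Hence there is no real obstacle; the theorem follows by combining the Weil rationality theorem with Theorem \ref{rationalthm} and Theorem \ref{mainthm}. I would also remark, as the authors do in the rational case, that $D_f$ plays the role of the Frobenius on $H^1$ and $D_g$ that of Frobenius on $H^0 \oplus H^2$, so the identity is the exact analog in our framework of the cohomological determinant formula for $\zeta_Y$.
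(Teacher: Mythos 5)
Your proposal is correct and follows essentially the same route as the paper: both reduce the claim to Theorem \ref{rationalthm} by viewing $\zeta_Y(s)$ as a rational function of $z = q^{-s}$ and substituting $z = q^{-s}$ into the determinant identity for $f(z)/g(z)$. You are somewhat more explicit than the paper in verifying the normalization hypotheses ($f(0)=g(0)=1$, no zero at the origin, finite sequences giving trace-class operators), but this is just filling in details the paper leaves implicit.
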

\begin{proof}
We have that $\frac{f(z)}{g(z)}$ is a rational function of $z$. Thus by the rational function result:
$\frac{f(z)}{g(z)} = \frac{\det_1 (I - z D_{f(z)})}{\det_1(I-zD_{g(z)})} $ and so replacing with $z = q^{-s}$ gives: $\zeta_Y(s) = \frac{\det_1 \left (I - q^{-s} D_{f}\right )}{\det_1 \left ( I - q^{-s} D_{g}\right )}$, as desired. \qedsymbol
\end{proof}

Note that the results in this subsection can be extended in a straightforward manner to the zeta function of a (smooth, algebraic) $d$-dimensional variety over $\mathbb{F}_q$, where the integer $d \geq 1$ is arbitrary. 

\subsection{The Gamma Function}

The next meromorphic function that we will turn our attention to is the Gamma function, defined initially by $\displaystyle \Gamma(z) = \int_{0}^\infty {x^{z-1} e^{-x} dx}$ for $\Re(z) >1$. This function has numerous applications in many branches of mathematics, including our focus - number theory. One point of interest is that this function gives a meromorphic continuation to all of $\mathbb{C}$ of the factorial function on integers. It also appears in the functional equation for the Riemann zeta function. We have the following well-known properties of the Gamma function:

\begin{thm} For $z \in \mathbb{C}$, $z \notin \{0, -1, -2, ...\}$, we have\\
1) $\Gamma(z+1) = z \Gamma(z)$.\\ 
2) $\Gamma(n) = (n-1)!$ for $n \in \mathbb{N}$.\\ 
3) $\displaystyle \Gamma(z) = \frac{e^{-\gamma z}}{z} \prod_{n=1}^\infty \left ( \left ( 1+ \frac{z}{n} \right)^{-1} e^{\frac{z}{n}}\right )$.
\end{thm}

This infinite product representation for $\Gamma(z)$ allows us to now show that we can recover the function from the determinant of the operator construction we have laid out.

\begin{thm}\label{gammathm}
We have that for all $z \in \mathbb{C}$,
\begin{equation}\label{gammaeqn}
\Gamma(z) = \frac{e^{-\gamma z}}{z} \frac{1}{\det_2(I - z D_{z\Gamma(z)})}.
\end{equation}
\end{thm}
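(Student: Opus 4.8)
The plan is to identify the sequence $Z$ entering the construction for the entire function $f(z) = z\Gamma(z)$, verify it satisfies the hypotheses needed for the $\det_2$ regularized determinant, and then match the resulting product formula against the Weierstrass product for $\Gamma$ given in part 3) of the preceding theorem. First I would note that $z\Gamma(z)$ is entire (the simple pole of $\Gamma$ at $0$ is cancelled) and nonvanishing at $z=0$, since $z\Gamma(z) = \Gamma(z+1)$ and $\Gamma(1)=1$; thus the construction of $D_{z\Gamma(z)}$ from Section 4.1 applies. The zeros of $\Gamma(z+1)$ are exactly the negative integers $\{-1,-2,-3,\dots\}$, each simple, so the sequence of reciprocals is $Z = (z_n)$ with $z_n = -\tfrac{1}{n}$ for $n \in \mathbb{N}$.

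Next I would check which trace ideal $D_{z\Gamma(z)} = D_Z$ belongs to, using Theorem \ref{mainthm}: since $\sum_n |z_n| = \sum_n \tfrac{1}{n} = \infty$ but $\sum_n |z_n|^2 = \sum_n \tfrac{1}{n^2} < \infty$, the operator $D_Z$ is Hilbert--Schmidt, i.e. $D_Z \in J_2 \setminus J_1$. Hence $\det_2(I - zD_Z)$ is the appropriate regularized determinant, and Theorem \ref{detnthm} (with $n=2$) gives
\begin{equation*}
\det_2(I - zD_Z) = \prod_{k=1}^\infty \left[ (1 - z\, z_k)\exp(z\, z_k) \right] = \prod_{k=1}^\infty \left[ \left(1 + \frac{z}{k}\right) \exp\!\left(-\frac{z}{k}\right) \right],
\end{equation*}
where I have substituted $\lambda_k(D_Z) = z_k = -1/k$ and $\mu = -z$, being careful with signs (the eigenvalue sum $\sum_{j=1}^{n-1}(-1)^j j^{-1}\lambda_k^j \mu^j$ has only the $j=1$ term, $-\lambda_k \mu = z_k z$). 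Comparing with part 3) of the preceding theorem, $\Gamma(z) = \dfrac{e^{-\gamma z}}{z}\displaystyle\prod_{k=1}^\infty \left(\left(1+\frac{z}{k}\right)^{-1} e^{z/k}\right)$, I recognize the product above as the reciprocal of the product in the Weierstrass formula, so $\det_2(I - zD_Z) = \left[ \prod_{k=1}^\infty \left(1+\frac{z}{k}\right)^{-1} e^{z/k} \right]^{-1}$ and therefore $\Gamma(z) = \dfrac{e^{-\gamma z}}{z}\cdot \dfrac{1}{\det_2(I - zD_{z\Gamma(z)})}$, which is the claimed identity \eqref{gammaeqn}. Finally I would remark that both sides are meromorphic on all of $\mathbb{C}$ with the same poles (the nonpositive integers), so the identity, established wherever the product converges, extends to all admissible $z$, and the stated formula holds for all $z \in \mathbb{C}$ in the sense of meromorphic functions.

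The only real subtlety — and the step I would be most careful about — is bookkeeping of signs and the role of the variable: the theorem statement writes $\det_2(I - z D_{z\Gamma(z)})$, so one must track the $-z$ versus $z_k = -1/k$ cancellation correctly so that the exponential factors come out as $e^{-z/k}$ (matching the $e^{z/k}$ in the denominator of Weierstrass' product after inversion) rather than $e^{+z/k}$. Everything else is a direct citation: entireness and nonvanishing at $0$ of $z\Gamma(z)$, the classification of $D_Z$ via Theorem \ref{mainthm}, and the product expansion via Theorem \ref{detnthm}. There is no analytic obstacle beyond convergence of the Weierstrass product, which is classical and already encoded in part 3) of the quoted theorem.
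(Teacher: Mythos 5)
There is a genuine gap at the outset of your argument. You assert that $z\Gamma(z) = \Gamma(z+1)$ is entire and that ``the zeros of $\Gamma(z+1)$ are exactly the negative integers.'' Both claims are false: $\Gamma(z+1)$ is meromorphic with simple poles at $z = -1, -2, -3, \dots$ (multiplying $\Gamma(z)$ by $z$ cancels only the pole at $0$, not the others), and $\Gamma$ has no zeros at all. Since the construction of Section 4.1 takes as input an \emph{entire} function $f$ with $f(0) \neq 0$ and forms $Z$ from the reciprocals of the \emph{zeros} of $f$, it cannot be applied to $z\Gamma(z)$ in the way you describe. The paper instead applies the construction to $g(z) = \frac{1}{z\Gamma(z)} = \frac{1}{\Gamma(z+1)}$, which genuinely is entire (the reciprocal of $\Gamma$ is entire), satisfies $g(0) = 1$, and has simple zeros at exactly the negative integers; that is what produces $Z = (-1/n)_{n \ge 1}$. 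You landed on the right sequence $Z$, but the justification you gave would collapse if taken literally, and the sentence ``thus the construction of $D_{z\Gamma(z)}$ from Section 4.1 applies'' does not hold as stated.

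Once $Z$ is correctly identified, the rest of your reasoning matches the paper's proof exactly: $\sum 1/n = \infty$ and $\sum 1/n^2 < \infty$ put $D_Z$ in $J_2 \setminus J_1$ by Theorem \ref{mainthm}, so $\det_2$ is the appropriate regularized determinant; your sign bookkeeping with $\mu = -z$ and $\lambda_k = -1/k$ in Theorem \ref{detnthm} is correct and yields $\det_2(I - zD_Z) = \prod_{n \ge 1} (1 + z/n)e^{-z/n}$; and inverting this against the Weierstrass product in part 3) of the preceding theorem gives the stated identity. The computational core is sound; the error is confined to the identification of the entire function feeding the construction. The notation $D_{z\Gamma(z)}$ in the theorem statement is admittedly suggestive of the mistake you made, but the paper's own proof makes explicit that the construction is applied to $1/(z\Gamma(z))$, so the subscript should be read as a label for the classical function being recovered rather than as the entire function whose zeros are reciprocated.
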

\begin{proof}
We will apply our construction to the function $g(z) = \frac{1}{z\Gamma(z)}$. This function is entire and has a simple zero at each negative integer. Note that the residue of $\Gamma(z)$ at $z=0$ is $1$; so that $g(0) = 1$. Now if we consider the sequence, $Z = (-\frac{1}{n})$, of reciprocals of zeros of $g(z)$, we see that it is not a summable series but that it is square summable. This means the associated operator $D_Z$ is not trace class, but only Hilbert--Schmidt. This forces us to use $\det_2$ in our definition of the regularized determinant. In fact,
\begin{equation}
\textstyle \det_2(I - z D_Z) = \displaystyle \prod_{n=1}^\infty \left [(1+\frac{z}{n})e^{-\frac{z}{n}} \right ].
\end{equation}
This then leads to the following computation:
\begin{align*}
\textstyle \frac{1}{\det_2 (I - z D_{z\Gamma(z)})} & = \textstyle \det_2(I-zD_Z)^{-1} \\
& =  \left (\prod_{n=1}^\infty \left [\left (1+\frac{z}{n}\right )e^{-\frac{z}{n}} \right ] \right )^{-1}\\
& =\prod_{n=1}^\infty \left [\left (1+\frac{z}{n}\right )^{-1}e^{\frac{z}{n}} \right ] \\
& = z e^{\gamma z} \Gamma(z).
\end{align*}
Therefore, we conclude that $\Gamma(z)$ is given by Equation (\ref{gammaeqn}), as desired. \qedsymbol
\end{proof}



\subsection{The Riemann Zeta Function}

Next, we turn our attention to another important example, the Riemann zeta function. First, we will consider the well-known Euler product expression for $\zeta(s)$. 

\begin{thm}
For $s \in \mathbb{C}$, with $\Re(s) > 1$, 
\begin{equation*}
\zeta(s) = \prod_{p} (1-p^{-s})^{-1},
\end{equation*}
where the product is taken over all prime numbers $p$.
\end{thm}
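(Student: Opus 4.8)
The plan is to derive the Euler product from the fundamental theorem of arithmetic together with the absolute convergence of the Dirichlet series $\zeta(s) = \sum_{n=1}^{\infty} n^{-s}$ on the half-plane $\Re(s) > 1$, by expanding a finite partial product over primes and then letting the cutoff tend to infinity.

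First I would fix $s$ with $\sigma := \Re(s) > 1$, so that $|p^{-s}| = p^{-\sigma} < 1$ for every prime $p$ and the geometric series expansion
\[
(1 - p^{-s})^{-1} = \sum_{k=0}^{\infty} p^{-ks}
\]
holds and converges absolutely. Next, for a fixed integer $N \geq 2$, I would form the finite product $\prod_{p \leq N} (1 - p^{-s})^{-1}$ over the primes $p \leq N$; since there are only finitely many factors and each is an absolutely convergent series, the product may be expanded and its terms rearranged freely, giving a sum of terms $p_1^{-k_1 s} \cdots p_r^{-k_r s}$ over all tuples of exponents $k_i \geq 0$, where $p_1, \dots, p_r$ enumerate the primes $\leq N$. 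By unique factorization, each such term equals $n^{-s}$ for exactly one positive integer $n$ all of whose prime divisors lie among $p_1, \dots, p_r$, and every such $n$ occurs exactly once; hence
\[
\prod_{p \leq N} (1 - p^{-s})^{-1} = \sum_{n \in S_N} n^{-s},
\]
where $S_N$ denotes the set of positive integers whose prime divisors are all $\leq N$.

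Then I would estimate the discrepancy. Since $S_N$ contains every integer in $\{1, 2, \dots, N\}$, the terms of $\zeta(s)$ missing from the partial product all have index $> N$, so
\[
\left| \zeta(s) - \prod_{p \leq N} (1 - p^{-s})^{-1} \right| \leq \sum_{\substack{n \in S_N \\ n > N}} n^{-\sigma} \leq \sum_{n > N} n^{-\sigma},
\]
and the right-hand side is the tail of the convergent series $\sum_n n^{-\sigma}$, hence tends to $0$ as $N \to \infty$. This simultaneously establishes that the infinite product $\prod_p (1 - p^{-s})^{-1}$ converges and that its value is $\zeta(s)$; convergence of the product in the strong sense is in any case immediate from $\sum_p p^{-\sigma} \leq \sum_n n^{-\sigma} < \infty$.

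The only genuinely delicate step is the rearrangement of terms in the expanded finite product, which is justified precisely by the absolute convergence of each geometric series (so that the usual Cauchy-product reasoning applies to finitely many factors), together with uniqueness of prime factorization to ensure that no integer $n$ is counted with the wrong multiplicity. Beyond that, the argument is just the tail estimate above and uses no further ideas.
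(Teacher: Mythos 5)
Your proof is correct and is the standard classical argument (finite partial products over primes, expansion via unique factorization, tail estimate using absolute convergence of $\sum n^{-\sigma}$). The paper itself states this as a well-known theorem and gives no proof, so there is nothing to compare against; your argument is the one any reference would supply.
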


To use our formulation, let $\phi(z) = 1-z$. Then, by the result for rational functions in Section 5.1 (Theorem \ref{rationalthm}), we have that $\phi(z) = \det(I - z D_\phi)$, which is true for every value of $z \neq 1$. Letting $z = p^{-s}$ then gives $(1-p^{-s})^{-1} = (\det(I - p^{-s} D_\phi))^{-1}$ for $s \neq \frac{2\pi ik}{\log p}$, $k \in \mathbb{Z}$. This leads to the following operator based Euler product:

\begin{thm}
For $s \in \mathbb{C}$, with $\Re(s) > 1$, 
\begin{equation}
\zeta(s) = \prod_{p} (\det(I - p^{-s} D_\phi))^{-1},
\end{equation}
where the product is taken over the primes $p$.
\end{thm}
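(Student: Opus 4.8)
The plan is to combine the classical Euler product for $\zeta(s)$ with the rational-function determinant identity established in Theorem \ref{rationalthm}, applied to the simplest possible rational function $\phi(z) = 1-z$. The overall structure is: first record that $\phi$ has a single zero at $z=1$ and no poles, so that the associated operator $D_\phi$ is the one-dimensional operator built from the one-term sequence $Z = \{1\}$ (the reciprocal of the zero $1$); then invoke the rational-function theorem to get the exact determinant identity $\det(I - zD_\phi) = 1-z$ valid for all $z \in \mathbb{C}$ (noting that here $k=0$ and $g(0)=1$ in the notation of Theorem \ref{rationalthm}); finally substitute $z = p^{-s}$ prime by prime, take reciprocals, and multiply over all primes, matching term by term against the Euler product.

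In more detail, the first step is to observe that $\phi(z)=1-z$ is entire with $\phi(0)=1 \neq 0$ and has exactly one zero, a simple zero at $a_1 = 1$. Following the construction in Section 4.1, we set $Z = (z_1)$ with $z_1 = 1/a_1 = 1$, form $D_\phi := D_Z$ acting on the one-component space $E_Z = E$, and note this is a finite-rank (hence trace class, hence $J_p$ for every $p$) operator. Applying Theorem \ref{rationalthm} with $h(z) = 1-z$, $k(z) \equiv 1$, $g(z) = 1-z$, $g(0)=1$ and $k=0$ yields $\phi(z) = \det_1(I - zD_\phi) = \det(I - zD_\phi)$ for all $z$, since both determinants here are ordinary Fredholm determinants of trace class operators and the degenerate denominator contributes $1$. (Equivalently, one can cite the Corollary preceding Section 4.1, which for the summable — indeed finite — sequence $Z=\{1\}$ gives $\det(I - zD_Z) = \prod_n(1-z_n z) = 1-z$ directly.)

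The second step is the substitution and the passage to the infinite product. For a fixed prime $p$ and any $s$ with $p^{-s} \neq 1$, i.e. $s \neq 2\pi i k/\log p$ for $k \in \mathbb{Z}$, plugging $z = p^{-s}$ into $\det(I - zD_\phi) = 1-z$ gives $\det(I - p^{-s}D_\phi) = 1 - p^{-s}$, and since $1-p^{-s} \neq 0$ we may take reciprocals: $(1-p^{-s})^{-1} = \left(\det(I - p^{-s}D_\phi)\right)^{-1}$. For $\Re(s) > 1$ we have $|p^{-s}| = p^{-\Re(s)} < 1$ for every prime, so in particular $p^{-s} \neq 1$ and the exceptional set is avoided for all $p$ simultaneously; moreover $\sum_p |p^{-s}| < \infty$ there, so the Euler product $\zeta(s) = \prod_p (1-p^{-s})^{-1}$ converges (absolutely) by the classical theorem quoted just above. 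Replacing each factor $(1-p^{-s})^{-1}$ by the equal quantity $\left(\det(I - p^{-s}D_\phi)\right)^{-1}$ term by term preserves the value of the (convergent) product, yielding $\zeta(s) = \prod_p \left(\det(I - p^{-s}D_\phi)\right)^{-1}$ for $\Re(s) > 1$, which is the claim.

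I do not expect any genuine obstacle here: the statement is essentially a bookkeeping exercise translating the scalar identity $1 - p^{-s} = \det(I - p^{-s}D_\phi)$ into the Euler product. The only point requiring a word of care is the handling of the exceptional values $s = 2\pi i k/\log p$ where $1 - p^{-s} = 0$ — but these never arise in the region $\Re(s) > 1$ where the statement is asserted, since there $|p^{-s}| < 1$ strictly, so the determinant identity and its reciprocal hold at every factor, and no subtlety about convergence of the rearranged product appears beyond what the classical Euler product already guarantees. Thus the mild ``hard part,'' such as it is, amounts only to remarking that $D_\phi$ is trace class (immediate, being finite-rank) so that $\det(I - zD_\phi)$ is literally the Fredholm determinant and Theorem \ref{rationalthm} applies verbatim.
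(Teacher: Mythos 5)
Your proof is correct and follows essentially the same route as the paper: apply the rational-function determinant identity (Theorem \ref{rationalthm}, or equivalently the corollary giving $\det(I - zD_Z) = \prod_n(1 - z_n z)$) to $\phi(z) = 1-z$ to get $\det(I - zD_\phi) = 1-z$, substitute $z = p^{-s}$, take reciprocals, and invoke the classical Euler product, noting that the exceptional values $s = 2\pi i k/\log p$ never occur when $\Re(s) > 1$. Your version is in fact a bit more careful than the paper's in stating that the identity $\det(I - zD_\phi) = 1-z$ holds for all $z$, with the restriction $z \neq 1$ needed only when passing to reciprocals.
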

\begin{proof}
We simply apply the determinant equality to each term in the infinite product and then use the standard Euler product convergence. Note that for $\Re(s) > 1$, we never have $s = \frac{2\pi ik}{\log p}$ for any integer $k$; so that the determinant equality does apply at each prime $p$. \qedsymbol
\end{proof}

The completed zeta function, $\xi(s) = \frac{1}{2} \pi^{-\frac{s}{2}} s(s-1) \Gamma \left (\frac{s}{2} \right ) \zeta(s)$, is an entire function whose zeros all lie in the critical strip $\{s \in \mathbb{C}: 0 < \Re(s) < 1\}$ and coincide with the critical zeros of $\zeta(s)$. We have the following well-known product representation for $\xi(s)$.

\begin{thm}
For $s \in \mathbb{C}$ with $\Re(s) > 1$,
\begin{equation}
\xi(s) = \frac{1}{2} \pi^{-\frac{s}{2}} e^{\left (\log(2\pi)-1-\frac{\gamma}{2}\right )s} \prod_{\rho}{\left [ \left ( 1 - \frac{s}{\rho}\right ) e^{\frac{s}{\rho}}\right ]},
\end{equation}
where $\gamma$ denotes Euler's constant and the infinite product over $\rho$ is taken over all of the zeros of $\xi(s)$, which are the nontrivial (or critical) zeros of $\zeta(s)$. 
\end{thm}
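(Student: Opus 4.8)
The plan is to derive this identity as a special case of the Hadamard factorization theorem applied to the completed zeta function $\xi(s) = \frac{1}{2}\pi^{-s/2}s(s-1)\Gamma(s/2)\zeta(s)$, followed by the classical evaluation of the two constants that the factorization produces. (Since both sides are entire, the restriction $\Re(s)>1$ is inessential: once the identity is established on any open set it holds on all of $\mathbb{C}$ by analytic continuation.)

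First I would check that $\xi$ extends to an \emph{entire} function of order exactly $1$, hence of genus $1$. Entirety follows because the simple pole of $\zeta(s)$ at $s=1$ is cancelled by the factor $s-1$, the poles of $\Gamma(s/2)$ at $s=0,-2,-4,\dots$ are cancelled by the trivial zeros of $\zeta$ (with the pole at $s=0$ killed by the factor $s$), and $\xi$ satisfies the functional equation $\xi(s)=\xi(1-s)$. The order bound $\le 1$ comes from Stirling's estimate for $\Gamma(s/2)$ together with the polynomial growth of $\zeta(s)$ in vertical strips on $\Re(s)\ge \tfrac12$, and the functional equation propagates it to the left half-plane; the order is not less than $1$ because there are infinitely many zeros. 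Since the nontrivial zeros $\rho$ of $\zeta$ satisfy $\sum_\rho |\rho|^{-1-\varepsilon}<\infty$ for every $\varepsilon>0$ but $\sum_\rho|\rho|^{-1}=\infty$, the genus is $1$, which is precisely why the convergence factors $e^{s/\rho}$ are required, and Hadamard's theorem yields
\begin{equation*}
\xi(s) = e^{A+Bs}\prod_{\rho}\left[\left(1-\frac{s}{\rho}\right)e^{s/\rho}\right],
\end{equation*}
the product running over the zeros $\rho$ of $\xi$, which are exactly the nontrivial (critical) zeros of $\zeta$.

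Next I would pin down $A$ and $B$. Setting $s=0$ and using $s\,\Gamma(s/2)\to 2$ as $s\to 0$ together with $\zeta(0)=-\tfrac12$ gives $\xi(0)=\tfrac12$, so $e^{A}=\tfrac12$; since the product equals $1$ at $s=0$ this is consistent. The linear constant $B=\xi'(0)/\xi(0)$ is obtained by taking the logarithmic derivative of both the defining formula for $\xi$ and of the factorization and comparing at a convenient point: this brings in the classical facts $\zeta'(0)/\zeta(0)=\log(2\pi)$, $\Gamma'(1)/\Gamma(1)=-\gamma$ (equivalently the Weierstrass product for $\Gamma$ already recorded in Theorem~\ref{gammathm}), and the residue of $\zeta$ at $s=1$, and produces $B$ so that $e^{A+Bs}=\tfrac12\,\pi^{-s/2}\,e^{(\log(2\pi)-1-\gamma/2)s}$. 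The route I would actually write out is slightly repackaged: first establish the familiar identity $\zeta(s)=\dfrac{e^{(\log(2\pi)-1-\gamma/2)s}}{2(s-1)\Gamma(1+s/2)}\prod_\rho\big(1-\tfrac{s}{\rho}\big)e^{s/\rho}$, then substitute it into $\xi(s)=\tfrac12\pi^{-s/2}s(s-1)\Gamma(s/2)\zeta(s)$ and use $s\,\Gamma(s/2)=2\,\Gamma(1+s/2)$ to cancel the $(s-1)$ and the $\Gamma$-factors, leaving exactly $\tfrac12\pi^{-s/2}e^{(\log(2\pi)-1-\gamma/2)s}\prod_\rho(1-s/\rho)e^{s/\rho}$.

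The main obstacle is not the analysis but the arithmetic: the shape of the product (an exponential of a linear function times $\prod_\rho(1-s/\rho)e^{s/\rho}$) is immediate once order $1$ and genus $1$ are known, whereas the exact value of the linear coefficient — showing it is $\log(2\pi)-1-\tfrac{\gamma}{2}$ rather than some other combination — requires the precise normalizations of $\zeta$ near $s=0$ and $s=1$ and the entrance of Euler's constant through the digamma function. For the purposes of this paper the formula is standard (see \cite{Edw},\cite{Tit}), and only the \emph{shape} of the product is needed to feed the operator construction of Section~5.4, so I would present the above as a sketch and cite the classical references for the constant.
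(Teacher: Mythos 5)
The paper does not prove this theorem; it presents it without proof as a well-known classical fact (it is the Hadamard product for $\xi$, found in \cite{Edw} and \cite{Tit}), so there is no ``paper's own proof'' to compare against. Your sketch is the standard derivation and is essentially correct: establish that $\xi$ is entire of order $1$, invoke Hadamard factorization to get $\xi(s)=e^{A+Bs}\prod_\rho(1-s/\rho)e^{s/\rho}$, evaluate at $s=0$ to get $e^A=\tfrac12$, and pin down $B$ from the classical constants of $\zeta$ and $\Gamma$; your repackaged route of first writing the Weierstrass product for $\zeta$ itself and then multiplying through by $\tfrac12\pi^{-s/2}s(s-1)\Gamma(s/2)$ with the identity $s\,\Gamma(s/2)=2\Gamma(1+s/2)$ also checks out and cleanly produces exactly the exponential factor $\pi^{-s/2}e^{(\log(2\pi)-1-\gamma/2)s}$ in the statement. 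You are also right that the hypothesis $\Re(s)>1$ is inessential: both sides are entire, so the identity on that half-plane propagates to all of $\mathbb{C}$ (and indeed the paper uses it for all $s$ in Theorem~\ref{xithm}).

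One small imprecision worth flagging: ``the order is not less than $1$ because there are infinitely many zeros'' is not by itself a valid inference (e.g., $\cos\sqrt{z}$ has infinitely many zeros but order $\tfrac12$). What you actually need, and what you implicitly use in the next sentence, is that the exponent of convergence of the zeros equals $1$ (since $\sum_\rho|\rho|^{-1}$ diverges while $\sum_\rho|\rho|^{-1-\varepsilon}$ converges), which is what forces both the order to be at least $1$ and the genus to be exactly $1$. With that wording tightened, the argument is complete and is the one the cited references use.
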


Now if we wish to express $\xi(s)$ by using the determinant construction in this paper, we need to consider $Z = \{ \frac{1}{\rho}\}$ and the convergence of $\sum_{\rho} \frac{1}{\rho^p}$. It is proven in \cite{Edw} that this series converges for $p=1$, but only conditionally and so we will need $p=2$ to get the absolute convergence needed for $D_Z \in J_2$. Thus we must consider the determinant $\det_{2} (I - s D_{\xi(s)})$. This suggests the following theorem.

\begin{thm}\label{xithm}
For all $s \in \mathbb{C}$, 
\begin{equation}
\xi(s) = \frac{1}{2} \pi^{-\frac{s}{2}} e^{\left (\log(2\pi)-1-\frac{\gamma}{2}\right )s} \textstyle \det_{2} (I - s D_\xi).
\end{equation}
\end{thm}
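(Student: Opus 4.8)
The plan is to imitate the proof of Theorem \ref{gammathm} exactly, using the product representation for $\xi(s)$ recalled just above together with the definition and basic properties of the $\det_2$ regularized determinant established in Section 2. First I would set $Z = (z_\rho)$ to be the sequence of reciprocals $z_\rho = 1/\rho$ of the nontrivial zeros of $\zeta(s)$ (equivalently, the zeros of $\xi(s)$), counted with multiplicity; note $\xi(0) \neq 0$ so $0$ is legitimately excluded from the zero set and the construction of $D_\xi := D_Z$ from Section 4.1 applies. I would then invoke the classical fact (cited from \cite{Edw}) that $\sum_\rho |\rho|^{-1} = \infty$ but $\sum_\rho |\rho|^{-2} < \infty$, so that by Theorem \ref{mainthm} the operator $D_\xi$ lies in $J_2$ but not in $J_1$; this is precisely why $\det_2$ rather than $\det_1$ is the correct regularized determinant here.

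Next I would write down the product formula for $\det_2(I - s D_\xi)$ by applying Theorem \ref{detnthm} with $n=2$ and $\mu = -s$: since the nonzero eigenvalues of $D_\xi$ are exactly the $z_\rho = 1/\rho$ (with multiplicity), this gives
\begin{equation*}
\textstyle \det_2(I - s D_\xi) = \displaystyle \prod_\rho \left[ \left(1 - \frac{s}{\rho}\right) e^{s/\rho} \right],
\end{equation*}
the exponential factor $\exp(-(-1) \cdot 1^{-1} \lambda_\rho^{-1} \cdot (-s)) = e^{s/\rho}$ being exactly the one appearing in the Hadamard product for $\xi$. Comparing this with the stated product representation
\begin{equation*}
\xi(s) = \tfrac{1}{2} \pi^{-s/2} e^{(\log(2\pi) - 1 - \gamma/2)s} \prod_\rho \left[ \left(1 - \frac{s}{\rho}\right) e^{s/\rho} \right]
\end{equation*}
and substituting the boxed product for the infinite product yields the claimed identity. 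Finally I would note that, although the product representation was quoted for $\Re(s) > 1$, both sides of the asserted equation are entire functions of $s$ (the left side because $\xi$ is entire; the right side because $\det_2(I - sD_\xi)$ is entire in $s$ whenever $D_\xi \in J_2$, as recorded in Section 2, and the prefactors are entire), so the identity extends to all $s \in \mathbb{C}$ by analytic continuation.

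The only genuine subtlety — and the step I would flag as the main obstacle — is making sure the exponential normalizing factors are handled correctly: one must check that the Hadamard product for $\xi$ recalled in the theorem immediately preceding is written in the "genus one" form with precisely the convergence factors $e^{s/\rho}$, so that it matches term-by-term the product produced by $\det_2$ (which by the $n=2$ case of Theorem \ref{detnthm} forces exactly those factors), and that the leftover exponential and constant are consolidated into the single prefactor $\tfrac{1}{2}\pi^{-s/2} e^{(\log(2\pi)-1-\gamma/2)s}$. This is really a matter of bookkeeping with the well-documented constants in the functional equation / Hadamard factorization of $\xi$ (see \cite{Edw}, \cite{Tit}), not a conceptual difficulty; once the product formula for $\xi$ is taken in the correct normalization, the identification with $\det_2(I - sD_\xi)$ is immediate from Theorem \ref{detnthm}.
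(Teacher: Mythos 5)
Your proposal is correct and follows essentially the same route as the paper's proof: construct $D_\xi = D_Z$ from $Z = \{1/\rho\}$, apply the $n=2$ case of Theorem \ref{detnthm} to produce the product $\prod_\rho(1-s/\rho)e^{s/\rho}$, match it to the Hadamard product for $\xi$ recalled just above, and extend from $\Re(s)>1$ to all of $\mathbb{C}$ by noting both sides are entire. The one small slip is the inline exponent bookkeeping, where you write $\lambda_\rho^{-1}$ (and an extra leading minus sign) rather than $\lambda_\rho = 1/\rho$; the correct exponent from Theorem \ref{detnthm} is $(-1)^1 \cdot 1^{-1}\lambda_\rho \cdot(-s) = s/\rho$, which is what your displayed formula already shows, so this is only a typo and not a gap.
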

\begin{proof}
From the preceding discussion, we begin by defining $Z = \{ \frac{1}{\rho}\}$, and constructing $D_{\xi(s)} = D_Z$. Then we calculate:
\begin{align*}
\textstyle \det_{2} (I - s D_{\xi(s)}) & = \textstyle \det_{2} (I - s D_Z) \\
& = \displaystyle \prod_{\rho} \left [\left ( 1 - \frac{s}{\rho} \right ) e^{\frac{s}{\rho}} \right ]  \\
& = \frac{2\xi(s)}{\pi^{-\frac{s}{2}} e^{\left (\log(2\pi)-1-\frac{\gamma}{2} \right )s}}.
\end{align*}
Thus, $ \xi(s) = \frac{1}{2} \pi^{-\frac{s}{2}} e^{\left (\log(2\pi)-1-\frac{\gamma}{2}\right )s} \det_{2} (I - s D_\xi)$, as desired. This result is first obtained for $\Re(s) > 1$, and then upon analytic continuation, for all $s \in \mathbb{C}$. Indeed, both $\xi(s)$ and the regularized determinant define entire functions of $s$. \qedsymbol
\end{proof}

We can then combine the results for $\xi(s)$ and $\Gamma(s)$ to give an expression for $\zeta(s)$ in a similar spirit to the representation of zeta functions of curves over finite fields, as follows.

\begin{thm}\label{zetathm}
For all $s \in \mathbb{C}$, 
\begin{equation}
\zeta(s) = -\frac{e^{(\log(2\pi)-1)s}}{2} \frac{\det_2(I - \frac{s}{2} D_{\frac{s}{2}\Gamma(\frac{s}{2})})) \textstyle \det_{2} (I - s D_\xi)}{\textstyle \det_1(I-sD_\phi)},
\end{equation}
where $\det_2(I - \frac{s}{2} D_{\frac{s}{2}\Gamma(\frac{s}{2})}))$ gives the trivial zeros of $\zeta(s)$, $\textstyle \det_{2} (I - s D_\xi)$ gives the critical zeros of zeta, and $\textstyle \det_1(I-sD_\phi)$ gives the single pole at $s=1$ with $\phi(s) := 1-s$. 
\end{thm}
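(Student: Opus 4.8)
The plan is to obtain the identity by pure algebraic manipulation of the three determinant formulas already proved: Theorem~\ref{rationalthm} applied to the rational function $\phi(z)=1-z$, Theorem~\ref{gammathm} applied to $\Gamma$, and Theorem~\ref{xithm} for $\xi$, together with the defining relation $\xi(s)=\tfrac12\pi^{-s/2}s(s-1)\Gamma(s/2)\zeta(s)$. First I would solve this relation for $\zeta$:
\[
\zeta(s)=\frac{2\,\xi(s)}{\pi^{-s/2}\,s(s-1)\,\Gamma(s/2)},
\]
and substitute Theorem~\ref{xithm} into the numerator, so that $2\xi(s)=\pi^{-s/2}e^{(\log(2\pi)-1-\gamma/2)s}\det_2(I-sD_\xi)$ and the factor $\pi^{-s/2}$ cancels with the one in the denominator.

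Next I would rewrite the denominator. Applying Theorem~\ref{gammathm} with $z=s/2$ gives $\Gamma(s/2)=\dfrac{2e^{-\gamma s/2}}{s}\cdot\dfrac{1}{\det_2(I-\frac s2 D_{\frac s2\Gamma(s/2)})}$; the crucial cancellation is that the $\tfrac1s$ here absorbs the $s$ in $s(s-1)$, leaving $2(s-1)e^{-\gamma s/2}$ over $\det_2(I-\frac s2 D_{\frac s2\Gamma(s/2)})$. Finally, Theorem~\ref{rationalthm} applied to $\phi(z)=1-z$ (for which $k=0$, $g(0)=1$, and the only zero is at $z=1$) gives $1-s=\det_1(I-sD_\phi)$, hence $s-1=-\det_1(I-sD_\phi)$. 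This sign is exactly where the overall factor $-\tfrac12$ in the statement comes from, and it is also what places the simple pole of $\zeta$ at $s=1$ into the denominator of the formula. Collecting everything, the exponential factors combine as $e^{(\log(2\pi)-1-\gamma/2)s}/e^{-\gamma s/2}=e^{(\log(2\pi)-1)s}$ and the numerical constants produce the prefactor $-\tfrac12$, yielding the claimed identity.

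The remaining point is the domain of validity. I would first establish the identity for $\Re(s)>1$, where $\zeta$ is given by its Euler product and all three input formulas apply without ambiguity, and then note that the right-hand side is a ratio of entire functions of $s$ (each regularized determinant being entire by Theorem~\ref{detnthm} and the remarks of Section~2), so both sides are meromorphic on $\C$ and agree on an open set, hence on all of $\C$. As consistency checks I would verify that $\det_1(I-sD_\phi)=1-s$ forces a simple pole of the right-hand side precisely at $s=1$ (its numerator being finite and nonzero there, since $s=1$ is neither a trivial nor a critical zero of $\zeta$), that $\det_2(I-\frac s2 D_{\frac s2\Gamma(s/2)})=\prod_{n\ge1}(1+\tfrac{s}{2n})e^{-s/(2n)}$ vanishes exactly at $s=-2,-4,-6,\dots$, and that $\det_2(I-sD_\xi)$ vanishes exactly at the nontrivial zeros.

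I do not expect a genuine obstacle: the mathematical content lies entirely in the three previously proved determinant formulas, and what remains is careful bookkeeping. The one place that requires care — and is the step most likely to introduce an error — is reconciling the powers of $2$ and the two occurrences of $e^{-\gamma s/2}$ (one from $\Gamma(s/2)$, one buried in the exponent of the $\xi$-formula), together with the convention that $D_{\frac s2\Gamma(s/2)}$ denotes the operator attached, via reciprocals of zeros, to the entire function $w\mapsto 1/(w\Gamma(w))$ evaluated along $w=s/2$, so that its nonzero spectrum is $\{-1/n\}_{n\ge1}$ and its $\det_2$ matches the trivial-zero factor of the Weierstrass product for $1/\Gamma$.
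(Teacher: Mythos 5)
Your proposal is correct and follows exactly the paper's own route: solve the defining relation $\xi(s)=\tfrac12\pi^{-s/2}s(s-1)\Gamma(s/2)\zeta(s)$ for $\zeta$, substitute the determinant formulas for $\xi$ (Theorem~\ref{xithm}) and $\Gamma(s/2)$ (Theorem~\ref{gammathm} with $z=s/2$), cancel the $\pi^{-s/2}$ and $e^{-\gamma s/2}$ factors, and replace $s-1=-\det_1(I-sD_\phi)$ via Theorem~\ref{rationalthm}. The only difference is that you append an explicit analytic-continuation remark, which the paper already handles at the level of Theorem~\ref{xithm}, so it is harmless but not needed.
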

\begin{proof}
We first recall the following three equations (see, in particular, Theorems \ref{gammathm} and \ref{xithm}), valid for all $s \in \mathbb{C}$:
\begin{equation*}
\Gamma\left ( \frac{s}{2} \right ) = \frac{2e^{-\gamma \frac{s}{2}}}{s} \frac{1}{\textstyle \det_2(I - \frac{s}{2} D_{\frac{s}{2}\Gamma(\frac{s}{2})})},
\end{equation*}
\begin{equation*}
\xi(s) = \frac{1}{2} \pi^{-\frac{s}{2}} s(s-1) \Gamma \left (\frac{s}{2} \right ) \zeta(s),
\end{equation*} 
and
\begin{equation*}
\xi(s) = \frac{1}{2} \pi^{-\frac{s}{2}} e^{\left (\log(2\pi)-1-\frac{\gamma}{2}\right )s} \textstyle \det_{2} (I - s D_\xi).
\end{equation*}
We then solve for $\zeta(s)$ in the middle equation and substitute the other two to obtain successively: 
\begin{align*}
\zeta(s) & = \frac{2\pi^{\frac{s}{2}}\xi(s)}{s(s-1) \Gamma (\frac{s}{2})}\\
& =  2\pi^{\frac{s}{2}} \cdot \frac{\frac{1}{2} \pi^{-\frac{s}{2}} e^{\log(2\pi)-1-\frac{\gamma}{2}s} \det_{2} (I - s D_\xi)}{s(s-1) \frac{2e^{-\gamma s}}{s} ( \det_2(I - \frac{s}{2} D_{\frac{s}{2}\Gamma(\frac{s}{2})}))^{-1}}\\
& = \frac{e^{(\log(2\pi)-1)s}}{2} \frac{\det_2(I - \frac{s}{2} D_{\frac{s}{2}\Gamma(\frac{s}{2})})) \textstyle \det_{2} (I - s D_\xi)}{(s-1)}.
\end{align*}
Finally, letting $\phi(s) = 1-s$ and using Theorem \ref{rationalthm}, since $\phi$ is a rational function, we can replace $s-1 = - \textstyle \det_1(I-sD_\phi)$ and obtain the desired equation. \qedsymbol
\end{proof}

We will conclude this section with a different approach that gives an equivalent criterion for the Riemann Hypothesis. Let $Z$ be the set of zeros of the function defined by $\hat{\xi}(s) = \xi\left ( \frac{1}{2}+is \right )$. Construct the operator $D_Z = D_{\hat{\xi}}$. This leads to the following result.

\begin{thm}\label{RHeq}
The operator $D_{\hat{\xi}}$ is self-adjoint if and only if the Riemann hypothesis is true.
\end{thm}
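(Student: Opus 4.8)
The plan is to reduce the statement to the self-adjointness criterion of Theorem~\ref{mainthm}, part 2, applied to the sequence that generates $D_{\hat\xi}$, after first identifying the zero set of $\hat\xi$ explicitly in terms of the nontrivial zeros of $\zeta$. The only genuinely number-theoretic input needed is the classical zero structure of $\xi$ (its zeros are exactly the nontrivial zeros $\rho$ of $\zeta$, there are infinitely many of them, and $\xi(\tfrac12)\neq 0$) together with the bound $\sum_\rho|\rho|^{-2}<\infty$ used already in the discussion preceding Theorem~\ref{xithm}; everything else is linear algebra of weighted multiplication operators that is recorded in Theorem~\ref{mainthm}.

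First I would record the structure of $\hat\xi(s)=\xi(\tfrac12+is)$. Since $\xi$ is entire, not identically zero, with zero set exactly $\{\rho\}$, the function $\hat\xi$ is entire with infinitely many zeros, and $\hat\xi(0)=\xi(\tfrac12)\neq 0$, so the construction $D_{\hat\xi}$ of Section~4.1 (built from the reciprocals of the zeros of $\hat\xi$) is well posed. Writing a zero of $\hat\xi$ as $s$ with $\tfrac12+is=\rho$ gives $s=-i(\rho-\tfrac12)$; hence if $\rho=\beta+i\gamma$, the corresponding zero of $\hat\xi$ is $s=\gamma+i(\tfrac12-\beta)$. The affine map $\rho\mapsto -i(\rho-\tfrac12)$ is a bijection of $\mathbb{C}$, so it carries the zero set of $\xi$ onto the zero set of $\hat\xi$ preserving multiplicities. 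Consequently such an $s$ is real precisely when $\beta=\tfrac12$, and therefore \emph{every zero of $\hat\xi$ is real if and only if every nontrivial zero of $\zeta$ has real part $\tfrac12$, i.e.\ if and only if (RH) holds}; this biconditional is non-vacuous since $\hat\xi$ has infinitely many zeros. Note also that since every zero $s$ of $\hat\xi$ is nonzero, $s$ is real if and only if its reciprocal $1/s$ is real, so the same equivalence holds verbatim whether one lets the generating sequence $Z$ consist of the zeros of $\hat\xi$ or (following the convention that defines $D_f$ in Section~4.1) of their reciprocals.

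Next I would invoke Theorem~\ref{mainthm}, part 2. Because $|s|$ is comparable to $|\rho|$ along the zeros, $\sum|1/s|^{2}<\infty$, so $Z=(1/s)$ is square-summable and $D_{\hat\xi}\in J_2$; in particular $D_{\hat\xi}$ is a bounded (indeed compact) operator, which removes any domain subtlety and lets me apply the bounded-operator form of the criterion: $D_{\hat\xi}$ is self-adjoint if and only if every term of $Z$ is real. By the previous paragraph this holds if and only if every zero of $\hat\xi$ is real, which in turn holds if and only if (RH) is true, completing the argument. The step requiring the most care is not analytic but bookkeeping of the substitution $s\mapsto\tfrac12+is$: one must check that it induces an exact correspondence of zero sets \emph{with multiplicities}, so that no spurious or missing zero distorts the real-ness condition, and that $\hat\xi(0)\neq 0$ so that the reciprocal construction underlying $D_{\hat\xi}$ is legitimate; a secondary point worth stating cleanly is that one is asserting self-adjointness of a concrete bounded operator, so $D_{\hat\xi}\in J_2$ (hence boundedness) should be noted before citing Theorem~\ref{mainthm}.
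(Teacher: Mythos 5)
Your proof is correct and takes essentially the same route as the paper's one-line argument: both reduce to part (2) of Theorem~\ref{mainthm} together with the observation that the affine change of variable $s\mapsto \tfrac12+is$ sends the zero set of $\xi$ bijectively (with multiplicities) onto that of $\hat\xi$, so that all zeros of $\hat\xi$ are real exactly when (RH) holds. The extra bookkeeping you supply --- $\hat\xi(0)=\xi(\tfrac12)\neq 0$, membership of $D_{\hat\xi}$ in $J_2$, and the immateriality of whether $Z$ is taken to be the zeros of $\hat\xi$ or their reciprocals --- is all correct and merely makes explicit details that the paper leaves to the reader.
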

\begin{proof}
This follows directly from part (2) of Theorem \ref{mainthm} and the fact that the Riemann Hypothesis says that the zeros of $\xi\left ( \frac{1}{2}+is \right )$ must all be real. \qedsymbol
\end{proof}

It should be stressed that Theorem \ref{RHeq} does not as yet provide an approach to the Riemann hypothesis, for some of the reasons outlined in Section 6. We also note that Theorem \ref{xithm} is potentially more useful than Theorem \ref{zetathm} (in part because it does not involve a determinant associated with the gamma function). 

\subsection{Hadamard's Factorization Theorem of Entire Functions}

In this section, we observe that the theory presented here is quite general. It will apply to all entire functions of finite order. We will begin with an overview of the concepts of rank, genus and order of an entire function as described in \cite{Con}. 

\begin{definition}
Let $f$ be an entire function with (nonzero) zeros $\{a_1, a_2, ...\}$, repeated according to multiplicity and arranged such that $|a_1| \leq |a_2| \leq \cdots$. Then $f$ is said to be of \emph{finite rank} if there is a nonnegative integer $p$ such that $\sum_{n=1}^\infty |a_n|^{-(p+1)} < \infty$. If $p$ is the smallest integer such that this occurs, then $f$ is said to be of \emph{rank} $p$; a function with only a finite number of zeros has rank 0. A function is said to be of \emph{infinite rank} if it is not of finite rank.
\end{definition}

In order to define the genus of an entire function, we need to define what it means for an entire function to be written in standard form, which will require the following definition.

\begin{definition}
For $n \in \mathbb{N}_0$, define the \emph{elementary factor} 
\begin{equation*}
E_n(z) = \begin{cases} (1-z), & \text{if }n=0, \\ (1-z)\exp(\frac{z}{1}+\frac{z^2}{2}+\cdots \frac{z^n}{n}), & \text{if }n \geq 1.\end{cases}
\end{equation*}
\end{definition}

To justify the definition of elementary factor, simply note that if $\sum_{n=1}^\infty |a_n|^{-(p+1)} < \infty$, then the infinite product $\prod_{n=1}^\infty E_p(\frac{z}{a_n})$ converges uniformly on compact subsets of $\mathbb{C}$ and defines an entire function with (nonzero) zeros at the complex numbers $a_n$, $n\geq 1$. The exponential factor is what is needed to ensure convergence of the infinite product. With this definition in hand, we can, in turn, define the genus of an entire function:

\begin{definition}
An entire function $f$ has \emph{finite genus} if the following statements hold: 1) $f$ has finite rank $p$ and 2) $f(z) = z^m e^{g(z)}  \prod_{n=1}^\infty E_p \left (\frac{z}{a_n} \right )$, where $g(z)$ is a polynomial of degree $q$. In this case, the \emph{genus} of $f$ is defined by $\mu = \max(p, q)$. 
\end{definition}

We now define the order of an entire function:

\begin{definition}
An entire function $f$ is said to be of \emph{finite order} if there is a nonnegative constant $a$ and and $r_0 > 0$ such that $|f(z)| < \exp(|z|^a)$ for $|z| > r_0$. If $f$ is not of finite order, then $f$ is said to be of \emph{infinite order}. If $f$ is of finite order, then the number $\lambda = \inf \{a: |f(z)| < \exp(|z|^a)\text{ for }|z|\text{ sufficiently large}\}$ is called the \emph{order} of $f$.
\end{definition}

Thus the order of an entire function $f$ is a measure of the growth of $|f(z)|$ as $|z| \to \infty$ whereas the rank of $f$ is based on the growth of the $n^{th}$ smallest zero as $n \to \infty$. From the definitions, there is no inherent relationship between the two concepts, but with the following version of the Hadamard factorization theorem, we see that they are in fact closely related:

\begin{thm}
\emph{(Hadamard's Factorization Theorem)} 
If $f(z)$ is an entire function of finite order $\lambda$, then $f$ has finite genus $\mu \leq \lambda$ and $f$ admits the following factorization: 
\begin{equation}
f(z) = z^m e^{g(z)} \displaystyle \prod_{n=1}^\infty E_p \left (\frac{z}{a_n} \right ),
\end{equation} 
where $g(z)$ is a polynomial of degree $q \leq \lambda$ and $p = [\lambda]$, the integer part of $\lambda$. In particular, $f$ is of rank not exceeding $p$.
\end{thm}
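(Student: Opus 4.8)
**The plan is to establish Hadamard's factorization theorem as stated, following the classical route through the theory of canonical products, but organized so that the three conclusions — finite genus, the explicit factorization with $p = [\lambda]$, and the degree bound $q \le \lambda$ — fall out in sequence.** The starting point is the Weierstrass factorization, which guarantees that $f$ has \emph{some} representation $f(z) = z^m e^{h(z)} \prod_{n} E_{p_n}(z/a_n)$ with $h$ entire and the $p_n$ chosen to force convergence; the work is to sharpen this so that a single $p = [\lambda]$ suffices for all factors and so that $h$ is a polynomial of controlled degree. The main quantitative input is Jensen's formula, which converts the growth hypothesis $|f(z)| < \exp(|z|^{\lambda+\varepsilon})$ for large $|z|$ into a counting estimate on the zeros: the number $n(r)$ of zeros in $|z| \le r$ satisfies $n(r) = O(r^{\lambda+\varepsilon})$, and hence $\sum_n |a_n|^{-s} < \infty$ for every $s > \lambda$. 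Taking $s = p+1$ with $p = [\lambda]$ (so $p + 1 > \lambda$) shows $f$ has finite rank at most $p$, giving the first half of "finite genus."

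First I would prove the zero-counting estimate. Apply Jensen's formula on circles $|z| = r$ (after dividing out the zero at the origin if $m \ge 1$, and normalizing so $f(0) \ne 0$): $\log|f(0)| + \int_0^r \frac{n(t)}{t}\,dt = \frac{1}{2\pi}\int_0^{2\pi} \log|f(re^{i\theta})|\,d\theta \le r^{\lambda+\varepsilon}$ for $r$ large. Since $\int_r^{2r} n(t)/t\,dt \ge n(r)\log 2$, this yields $n(r) = O(r^{\lambda+\varepsilon})$. A standard summation-by-parts or dyadic-decomposition argument then converts this into convergence of $\sum |a_n|^{-s}$ for all $s > \lambda$, and in particular for $s = p+1$. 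This establishes that $f$ has rank $\le p = [\lambda]$, so the canonical product $P(z) = \prod_n E_p(z/a_n)$ converges and is entire with exactly the zeros $a_n$.

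Next I would control the exponential factor. Writing $f(z) = z^m e^{h(z)} P(z)$, the goal is to show $h$ is a polynomial of degree $q \le \lambda$. The standard method is to estimate $P(z)$ from below on suitable circles — choosing radii $r$ that stay a definite distance from all zeros, using the rank-$p$ convergence to bound $|E_p(z/a_n)|$ from below on such circles — to obtain $\log|P(z)| \ge -C|z|^{\lambda+\varepsilon}$ there. Combined with $\log|f(z)| \le |z|^{\lambda+\varepsilon}$, this gives $\operatorname{Re} h(z) \le C'|z|^{\lambda+\varepsilon}$ on those circles, and then (via the Borel--Carathéodory inequality, which bounds $|h|$ on a smaller circle by $\max \operatorname{Re} h$ on a larger one, plus Cauchy estimates on the Taylor coefficients of $h$) one concludes that all Taylor coefficients of $h$ of index exceeding $\lambda$ vanish, i.e. $h = g$ is a polynomial of degree $q \le [\lambda+\varepsilon]$; letting $\varepsilon \downarrow 0$ gives $q \le \lambda$. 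With $p \le \lambda$ and $q \le \lambda$, the genus $\mu = \max(p,q) \le \lambda$ is finite, completing the proof.

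\textbf{The hard part will be the lower bound on the canonical product $P(z)$}, specifically the care needed in choosing the circles $|z| = r$ that avoid neighborhoods of the zeros $a_n$ and in estimating $\log|E_p(w)|$ from below both when $|w|$ is small and when $|w|$ is large, uniformly enough to get the clean exponent $\lambda + \varepsilon$. This is the one genuinely delicate estimate; the Jensen and Borel--Carathéodory steps, while essential, are fairly mechanical once the counting function is in hand. I would also remark that this classical theorem is stated here only to set up the "quantized Hadamard factorization": once $f$ is written in the above form with rank-$p$ zeros, the sequence $Z = (1/a_n)$ of reciprocals of the zeros satisfies $\sum_n |1/a_n|^{p+1} < \infty$, so by Theorem~\ref{mainthm} the associated operator $D_Z = D_f$ lies in $J_{p+1}$, and hence $\det_{p+1}(I - z D_f)$ is well defined and reproduces the canonical product — which is the payoff pursued in the remainder of this section.
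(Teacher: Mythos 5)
The paper does not prove this theorem; it simply states the classical Hadamard Factorization Theorem as a known result, with the surrounding definitions (rank, genus, order, elementary factors) drawn from \cite{Con}. Your sketch is the standard textbook proof found in that reference and elsewhere: Jensen's formula yields the counting bound $n(r) = O(r^{\lambda+\varepsilon})$ and hence $\sum_n |a_n|^{-s} < \infty$ for every $s > \lambda$, so the rank is at most $p = [\lambda]$ and the canonical product $\prod_n E_p(z/a_n)$ converges; then a lower bound on that product on a sequence of circles avoiding the zeros, combined with the Borel--Carath\'eodory inequality and Cauchy estimates, forces the exponential factor to be a polynomial of degree $q \le \lambda$, giving finite genus $\mu = \max(p,q) \le \lambda$. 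The argument is correct (and you rightly flag the lower bound on the canonical product as the one delicate step); since the paper gives no proof, there is nothing to compare it against except to say that this is precisely the classical route the paper implicitly relies on.
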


Now when we apply our operator construction to a given entire function of finite order we obtain a Quantized Hadamard Factorization Theorem.

\begin{thm}\label{QHFT}
\emph{(Quantized Hadamard Factorization Theorem)}
If $f(z)$ is an entire function of finite order $\lambda$, then $f$ admits the following factorization:
\begin{equation}
f(z) = z^m e^{g(z)} \textstyle \det_{p+1}(I - z D_{f(z)}),
\end{equation}
 where $g(z)$ is a polynomial of degree $q \leq \lambda$, and $p = [\lambda]$. 
\end{thm}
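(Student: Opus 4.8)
The plan is to deduce Theorem \ref{QHFT} from the classical Hadamard Factorization Theorem stated just above, by recognizing the Hadamard canonical product $\prod_n E_p(z/a_n)$ as the regularized determinant $\det_{p+1}(I - z D_{f(z)})$ through a direct term-by-term comparison. First I would separate off the zero at the origin: if $f$ has a zero of order $m$ at $0$, write $f(z) = z^m h(z)$ with $h$ entire and $h(0) \neq 0$; then $h$ has the same finite order $\lambda$ and the same nonzero zeros (with multiplicities) as $f$, and by our conventions $D_{f(z)}$ is the operator $D_Z$ attached to the sequence $Z = (z_n)$ with $z_n = 1/a_n$, the reciprocals of these zeros. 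So it suffices to prove $h(z) = e^{g(z)} \det_{p+1}(I - z D_Z)$.

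Next I would invoke Hadamard's theorem for $h$: since $h$ has finite order $\lambda$, it has finite rank at most $p := [\lambda]$, hence $\sum_n |a_n|^{-(p+1)} = \sum_n |z_n|^{p+1} < \infty$, and $h$ admits the factorization $h(z) = e^{g(z)} \prod_n E_p(z/a_n)$ with $g$ a polynomial of degree $q \le \lambda$ (this statement, including the bound $q \le \lambda$, is inherited verbatim from the classical theorem). The summability $\sum_n |z_n|^{p+1} < \infty$ is precisely the hypothesis of part (6) of Theorem \ref{mainthm}, which therefore gives $D_Z \in J_{p+1}$; hence the regularized determinant $\det_{p+1}(I - z D_Z)$ is well defined (Definition \ref{detndef}). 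The key step is then to evaluate it. Since $D_Z$ is multiplication by the sequence $(z_n)$ on $\bigoplus_n E \cong \ell^2$, its nonzero eigenvalues listed with multiplicity are exactly the numbers $1/a_n$, so applying the product formula of Theorem \ref{detnthm} with the parameter $n$ there equal to $p+1$, $A = D_Z$ and $\mu = -z$ yields
\begin{align*}
\det_{p+1}(I - z D_Z) &= \prod_n \left[ \left( 1 - \frac{z}{a_n} \right) \exp\left( \sum_{j=1}^{p} (-1)^j j^{-1} a_n^{-j}(-z)^j \right) \right] \\
&= \prod_n \left[ \left( 1 - \frac{z}{a_n} \right) \exp\left( \sum_{j=1}^{p} \frac{1}{j}\left( \frac{z}{a_n} \right)^j \right) \right],
\end{align*}
using $(-1)^j(-z)^j = z^j$. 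The bracketed factor is by definition exactly the elementary factor $E_p(z/a_n)$, so $\det_{p+1}(I - z D_Z) = \prod_n E_p(z/a_n)$; combining this with the Hadamard factorization of $h$ and multiplying back by $z^m$ gives $f(z) = z^m e^{g(z)} \det_{p+1}(I - z D_{f(z)})$.

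The computation above is essentially the whole proof, so the things to get right are bookkeeping rather than analysis, and I expect the main obstacle to be exactly the matching of the exponential correction terms: in Theorem \ref{detnthm} the inner sum runs up to $n-1 = p$ and carries the factor $(-1)^j j^{-1}\lambda_k^j \mu^j$, and it is the substitution $\mu = -z$ that cancels all the signs so as to reproduce $E_p$. A second point to verify explicitly is that the eigenvalue list fed into Theorem \ref{detnthm} really is $(1/a_n)$ with the correct multiplicities and contains no spurious zero eigenvalue; this follows from the description of $D_Z$ as a multiplication operator established in Section 4.1 together with the fact that each $a_n$ is finite, so $z_n \neq 0$. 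The case of a function with only finitely many nonzero zeros (including polynomials and functions $z^m e^{g(z)}$) needs a word as well: there $D_Z$ is of finite rank, the product is finite or empty, and the passage from $E_p$ to the genuine-rank elementary factors is harmless because the discrepancy is $\exp$ of a polynomial that can be absorbed into $g$, consistently with the statement of Hadamard's theorem. There is no serious analytic obstacle, the convergence of the infinite product being already guaranteed once $D_Z \in J_{p+1}$.
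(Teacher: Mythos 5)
Your proof is correct and takes essentially the same route as the paper's: invoke the classical Hadamard factorization, use the rank bound $p = [\lambda]$ to place $D_Z$ in $J_{p+1}$, and identify $\det_{p+1}(I - zD_Z)$ with the canonical product $\prod_n E_p(z/a_n)$ via the product formula of Theorem \ref{detnthm}. The paper states the product identity directly without spelling out the $\mu = -z$ sign cancellation or the spurious-eigenvalue check, which you do more carefully; otherwise the arguments coincide.
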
 
\begin{proof}
By the standard Hadamard factorization theorem, we can write
\begin{equation}
f(z) = z^m e^{g(z)} \prod_{n=1}^\infty E_p(\frac{z}{a_n}),
\end{equation}
where $g(z)$ is a polynomial of degree $q \leq \lambda$ and $p = [\lambda]$, with the rank of $f$ not exceeding $p$. That is, if $\{a_1, a_2, ...\}$ is the multiset of zeros of $f(z)$ including multiplicity, then $ \sum_{n=1}^\infty \frac{1}{|a_n|^{p+1}} < \infty$. Thus if $Z = \{ \frac{1}{a_1}, \frac{1}{a_2}, ...\}$, the associated operator $D_Z \in J_{p+1}$. Then we can calculate successively:
\begin{align*}
\textstyle \det_{p+1} (I - z D_{f(z)}) & = \textstyle \det_{p+1} (I - z D_Z) \\
& = \displaystyle \prod_{n=1}^\infty \left [\left ( 1 - \frac{z}{a_n} \right ) \exp \left (\displaystyle \sum_{j=1}^p \frac{z^j}{ja_n^j} \right ) \right ]  \\
& = \displaystyle \prod_{n=1}^\infty E_p \left (\frac{z}{a_n} \right)\\
& = \frac{f(z)}{z^m e^{g(z)}}.
\end{align*}
Thus we have that $f(z) = z^m e^{g(z)} \det_{p+1}(I - z D_{f(z)})$, as desired. \qedsymbol
\end{proof}

In the above proof, we see that the extra convergence factor in the regularized determinants is exactly the same as the one for the elementary factor in the infinite product representation of entire functions, which validates, in some sense, the choice in this paper for the type of regularized determinants as those based on trace ideals. Thus the convergence factors needed in the usual Hadamard Factorization Theorem have an interpretation here relating to $\text{Tr}(D^n)$. 
 
\section{Conclusion}
We ended the previous section by giving what we called the Quantized Hadamard Factorization Theorem. This showed that the construction given in this paper can apply to any entire function of finite order and then, by taking ratios of determinants, can be extended to any meromorphic function which is a ratio of two such entire functions. This was worked out explicitly for the Riemann zeta function (see Theorems \ref{xithm} and \ref{zetathm} above), and it has also been worked out by the authors for zeta functions of self-similar strings, both in the lattice and nonlattice case. (See \cite[Chapters 2 and 3]{LapvFr} for background on self-similar fractal strings.) However, there was nothing in the construction preventing us from applying our results to even more general number-theoretic functions. In particular, a natural idea would be to try to extend our determinant formulas to other L-functions (see \cite{Sar}). Could we then apply this construction to any zeta function (or, at least, to most zeta functions) from arithmetic geometry? This would require, essentially, knowledge about the existence of suitable meromorphic extensions of such functions, as well as about the asymptotic behavior of the zeros and poles of such extensions. Phrased differently, the $L$-functions for which our methods could be applied are those which can be suitably completed to become entire functions of finite order (or ratios of such entire functions). Furthermore, this naturally brings the consideration of the Selberg class of functions. See \cite{Sar} or \cite[Appendix E]{ISRZ} (and the many references therein) for a discussion of these functions. 

Another direction to take is to further justify why using ratios of these determinants is the correct method for handling meromorphic functions. In \cite{ISRZ}, the second author considers the properties of the Riemann zeta function as related to supersymmetric theory in physics and this ratio of determinants can be explained as a (regularized) Berezinian determinant from the theory of super linear algebra. However, this will not be further discussed here, but could be crucial for expanding upon the ideas presented in this work.

In this paper, we obtained a quantized version of the Hadamard factorization theorem, Theorem \ref{QHFT}, but we expect to be able to generalize this result to obtain a quantized Weierstrass product formula; see \cite{CobLap}.

With all of the successes obtained here, we must also admit the failures of this theory, at least in its present stage of its development. The construction of the operator for $\zeta(s)$ explicitly assumed knowledge of the zeros of $\zeta(s)$ and thus one could never hope to prove (RH) directly with this method. However, if we could find a different way to obtain the same function, by comparison you could extrapolate the zeros as was done with the Weil conjectures. That is, we need a suitable geometry and cohomology theory that would result in the same ratios of determinants of these operators. In the Weil conjectures, the geometry or points on the curve (over $\overline{\mathbb{F}_q}$, the algebraic closure of $\mathbb{F}_q$) corresponded to the fixed points of powers (or iterates) of the Frobenius operator. (Recall from our discussion just prior to Section 4.1 that in our context, the ``fractal cohomology space'' would seem to be the total eigenspace $E_Z$ to which we restricted the original generalized Polya--Hilbert operator, viewed as Frobenius acting on an appropriate analog of the underlying ``curve''.) Could we then consider the fixed points of the operator constructed in this paper? Analytically, this can be done by considering a suitable notion of generalized eigenfunctions (viewed as generalized tempered distributions). Thus far, however, this idea has not led to the development of a suitable working theory for the geometry underlying $\zeta(s)$. Providing an appropriate geometric framework is one of our main long-term objectives for future research on this subject. 

Another interesting and related question (connected, in particular, to our discussion in Sections 1.1 and 1.3) is whether the still conjectural fractal cohomology theory satisfies a suitable analogue of the Lefschetz Fixed Point Formula (as stated in Theorem \ref{lefthm}) for the counterpart of Frobenius.

One additional plan that we are currently working on is to rephrase the construction we have described here as a cohomology of sheaves in order to properly transition from the local setup given in this paper to a more global approach that might give new and useful information.

\bibliographystyle{plain}
\bibliography{PaperRef}

\end{document}